\documentclass[12pt]{article}
\usepackage[utf8]{inputenc}
\usepackage[usenames,dvipsnames,svgnames,table]{xcolor}
\usepackage[margin=1in]{geometry}
\usepackage{graphicx}
\usepackage{subcaption}
\usepackage{tcolorbox}
\usepackage{mathtools}
\usepackage{times}
\usepackage{sp}
\usepackage[numbers,sort&compress]{natbib}
\usepackage{thm-restate}
\usepackage{enumitem}
\usepackage{tabularx}
\usepackage{setspace}
\usepackage[]{authblk}

\usepackage{pgf,tikz,pgfplots}
\pgfplotsset{compat=1.15}
\usepackage{mathrsfs}
\usetikzlibrary{arrows}

\usepackage{quiver}

\usepackage{subcaption}

\usepackage[colorlinks=true, citecolor=orange, linkcolor=black]{hyperref}

\newcommand{\proj}{\operatorname{proj}}
\newcommand{\gap}{\mathtt{gap}}
\newcommand{\pfeas}{\mathtt{pfeas}}
\newcommand{\dfeas}{\mathtt{dfeas}}
\newcommand{\kkt}{\mathtt{kkt}}
\newcommand{\tol}{\mathsf{tol}}
\newcommand{\terminate}{\mathtt{terminate}}
\newcommand{\restart}{\mathtt{restart}}

\title{{\bf Parameter Tuning with Generalization Guarantees for GPU-Accelerated Linear Programming}}

\author[]{Siddharth Prasad}
\author[]{Dravyansh Sharma}
\affil[]{{\normalsize Toyota Technological Institute at Chicago \\ \textcolor{magenta}{\nolinkurl{{sprasad,dravy}@ttic.edu}}}}
\date{}

\begin{document}

\maketitle

\begin{abstract}
  Recent research has developed practical, parallelizable first-order methods for large scale linear programming, but performance is highly dependent on hyperparameter selection. We derive generalization guarantees for hyperparameter tuning within (cu)PDLP, a state-of-the-art first-order LP solver designed for modern hardware. First, we pin down the behavior of PDHG, the primal-dual hybrid gradient algorithm that underlies PDLP, as a function of its step size and primal weight, leading to linear sample complexity guarantees for learning those parameters. We then conduct a structural analysis of PDLP, which augments PDHG with several specialized techniques like preconditioning, adaptive step sizes, averaging, adaptive restarts, and smoothed primal weight updates. Our analysis captures the behavior of the solution trajectory as a function of the hyperparameters and leverages recent advances in data-driven algorithm design to obtain polynomial sample complexity guarantees for learning those hyperparameters. Finally, we conduct proof-of-concept experiments that demonstrate the need for data-driven PDLP parameter tuning. Our results showcase the versatility of the data-driven algorithm design toolkit for principled hyperparameter tuning within solver-grade implementations of complex modern optimization algorithms.
\end{abstract}

\section{Introduction}

Linear programming (LP) is one of the most fundamental paradigms of optimization and operations research. LP solvers---traditionally based on the simplex algorithm or interior point methods and run on CPUs---are an extremely mature technology. They serve innumerable applications across science and industry. LP furthermore plays a notable role as a subroutine in tree-search based solvers for {\em integer programming}, the canonical methodology for practical discrete optimization~\citep{clautiaux2025last}.

\citet{applegate2021practical} have recently developed a first-order LP solver based on the {\em primal-dual hybrid gradient (PDHG)} algorithm, which is a gradient descent-ascent algorithm with convergence guarantees, of~\citet{chambolle2011first}. The new algorithm, {\em primal-dual linear programming (PDLP)}~\citep{applegate2021practical}, augments PDHG with a suite of enhancements including preconditioning, adaptive restarts, adaptive step sizes, and primal weight updates~\citep{applegate2021practical, lu2025cupdlp}. The development of PDLP---which was the first demonstration of a practical first-order LP solver that is competitive with state-of-the-art solvers---has spurred an active resurgence in work on first-order methods for linear programming. Interest in this research area is especially bolstered by the fact that the compute-dominant operations of first-order methods are matrix-vector products, which makes PDLP well-suited for GPU acceleration (unlike simplex and barrier/interior point methods). In particular,~\citet{lu2025cupdlp} present cuPDLP, which is an implementation of PDLP designed to run on GPUs. Versions of PDLP and cuPDLP are now implemented in a number of commercial and open-source LP solvers such as Gurobi, Xpress, Google OR-Tools, NVIDIA cuOpt, HiGHS, Cardinal Optimizer, {\em etc.}\footnote{PDLP is not without shortcomings, a primary one being its performance degradation when high-accuracy solutions are demanded~\citep{lu2025overview}. Nevertheless, it has established itself as a fresh contender in the LP solver arena.}

In this paper we derive generalization guarantees for (cu)PDLP hyperparameter tuning, which is a key aspect that impacts performance~\citep{lu2025overview}. That is, we bound the error between the empirical performance over a training set of LP instances and expected test-time performance, uniformly over all hyperparameter settings. We thus obtain a {\em sample complexity} guarantee that bounds the size of a training set needed so that the difference between training performance and expected test-time performance is small (which in particular ensures that parameter selection---via an algorithm such as empirical risk minimization (ERM)---does not overfit to the training set). 
Our generalization theory for PDLP showcases recent advancements in {\em data-driven algorithm design}~\citep{balcan2020data} with {\em Pfaffian} structure~\citep{balcan2025algorithm}, which we use to pin down the structure of PDLP as a function of its hyperparameters.
We validate our theory through simple, proof-of-concept experiments that illustrate the need for data-dependent PDLP parameter tuning.\looseness-1

Our results, detailed further in the following subsection, are a testament to the data-driven algorithm design toolkit in its ability to yield principled theoretical guarantees for hyperparameter tuning in solver-grade implementations of mathematical optimization algorithms.

\paragraph{Our contributions}

In Section~\ref{sec:pdhg}, we pin down the behavior of vanilla PDHG~\citep{chambolle2011first} as a function of its step size $\eta$ and primal weight $\omega$. We show that the primal and dual iterates at each round are piecewise-rational functions of $(\eta,\omega)$, and bound the number of pieces and degrees, leading to polynomial generalization bounds.

Next, in Section~\ref{sec:pdlp}, we study the behavior of PDLP as a function of its {\em primal weight update smoothing parameter} $\theta$ and its {\em Pock-Chambolle preconditioning parameter} $\alpha$. The former parameter controls how primal weights are updated across restarts and the latter parameter controls a matrix norm in the preconditioning procedure~\citep{pock2011diagonal}. Open source implementations of PDLP~\citep{applegate2021practical,lu2025cupdlp,lu2024mpax,maaz2025pdlp} all hard-code $\theta = 0.5$ and $\alpha = 1$, though \citet{lu2025overview} point out that $\theta$ and $\alpha$ are critical hyperparameters for performance. (PDLP has other tunable parameters---in particular those controlling restart frequency---and our theory readily extends to those parameters as well. The parameters $\theta,\alpha$ that we focus on are the ones that most directly impact the actual primal-dual iterates.)

Our analysis of PDLP leverages recent advances in the theory of data-driven algorithm design. A key development in this line of work, due to~\citet{balcan2025algorithm}, is the ability to analyze algorithms that compute {\em Pfaffian functions}~\citep{khovanskiui1991fewnomials} of their hyperparameters. Pfaffian functions encompass polynomials (the most basic kind of Pfaffian function), exponentials, logarithms, {\em etc.} We show that the primal and dual iterates of PDLP are piecewise-Pfaffian functions, and bound the complexity of these functions in terms of the lengths of their corresponding {\em Pfaffian chains.} A primary conceptual contribution of the present paper is to showcase the power and flexibility of the recent theory by using it understand the behavior of a complex numerical optimization algorithm as a function of its hyperparameters.

Finally, in Section~\ref{sec:experiments}, we conduct simple, proof-of-concept experiments that demonstrate the importance of data-dependent PDLP parameter tuning. For various LP instance distributions, we illustrate the effects of (i) tuning $\theta$, keeping $\alpha$ fixed at its default value of $1$, (ii) tuning $\alpha$, keeping $\theta$ fixed at its default value of $0.5$, and (iii) tuning $\theta$ and $\alpha$ simultaneously. The results show that parameter tuning can have a dramatic impact on convergence rate and that parameter tuning ought to be done in a data-dependent way. That is, the optimal parameters can vary greatly across instance distributions.

\paragraph{Related work} In Appendix~\ref{app:related_work}, we survey additional related work on first-order methods for mathematical programming and data-driven algorithm design for optimization.

\section{Problem Formulation and Background}\label{sec:problem_formulation}

PDLP is a first-order method for solving linear programs (LPs) with primal and dual form
\begin{equation*}\label{eq:primal+dual}
\min_{\vec{x}\in\R^n}\left\{\vec{c}^\top\vec{x} :
\begin{aligned}
&\vec{G}\vec{x}\ge\vec{h}\\ &\vec{A}\vec{x}=\vec{b}\\ &\vec{\ell}\le\vec{x}\le\vec{u}
\end{aligned}\right\};~\max_{\vec{y}\in\R^{m_1+m_2}, \vec{\lambda}\in\R^n}\left\{\vec{q}^\top\vec{y} + \vec{\ell}^\top\vec{\lambda}^{+} -\vec{u}^\top\vec{\lambda}^{-} : 
\begin{aligned} &\vec{c}-\vec{K}^\top\vec{y}=\vec{\lambda} \\ &\vec{y}_{1:m_1}\ge\vec{0}\\ &\vec{\lambda}\in\Lambda_1\times\cdots\Lambda_n
\end{aligned}
\right\},
\end{equation*}
respectively, where $\vec{c}\in\R^n$, $\vec{G}\in\R^{m_1\times n}$, $\vec{h}\in\R^{m_1}$, $\vec{A}\in\R^{m_2\times n}$, $\vec{b}\in\R^{m_2}$, $\vec{\ell}\in(\R\cup\{-\infty\})^n$, $\vec{u}\in(\R\cup\{\infty\})^n$, $\vec{K}^\top = \begin{bmatrix}\vec{G}^\top & \vec{A}^\top\end{bmatrix}$, $\vec{q}^\top = \begin{bmatrix}\vec{h}^\top & \vec{b}^\top\end{bmatrix}$, and $\Lambda_i = \{0\}$ if $\ell_i = -\infty, u_i=\infty$; $\R_{\le 0}$ if $\ell_i=-\infty, u_i\in\R$; $\R_{\ge 0}$ if $\ell_i\in\R, u_i = \infty$; $\R$ if $\ell_i, u_i\in \R$. It builds upon the Primal-Dual Hybrid Gradient (PDHG) algorithm of~\citet{chambolle2011first}, which operates on the equivalent saddle point problem $\min_{\vec{x}\in X}\max_{\vec{y}\in Y} \vec{c}^\top\vec{x} - \vec{y}^\top \vec{K}\vec{x}+\vec{q}^\top\vec{y}$, where $X = \{\vec{x}\in\R^n : \vec{\ell}\le\vec{x}\le\vec{u}\}$ and $Y = \{\vec{y}\in\R^{m_1+m_2} : \vec{y}_{1:m_1}\ge\vec{0}\}$. Let $L = |\{i : \ell_i > -\infty\}|$ and $U = |\{i : u_i < \infty\}|$. 
\paragraph{PDHG} Primal-Dual Hybrid Gradient~\citep{chambolle2011first} iteratively takes gradient steps in primal and dual space \begin{equation*}
    \vec{x}^{(t+1)} = \proj_X\!\left(\vec{x}^{(t)} - \frac{\eta}{\omega}\big(\vec{c} - \vec{K}^\top\vec{y}^{(t)}\big)\right);~
    \vec{y}^{(t+1)} = \proj_Y\!\big(\vec{y}^{(t)} + \eta\omega\big(\vec{q}-\vec{K}\big(2\vec{x}^{(t+1)}-\vec{x}^{(t)}\big)\big)\big),
\label{eq:pdhg}\end{equation*} until a termination condition is met, where $\eta>0$ is the {\em step size} and $\omega>0$ is the {\em primal weight}. (PDHG converges whenever $\eta\le 1/\norm{\vec{K}}_2$~\citep{chambolle2016ergodic}.) 
Here, $\proj_X(\cdot)$ clamps its input to the range $[\vec{\ell},\vec{u}]\subseteq\R^n$ and $\proj_Y(\cdot)$ clamps the first $m_1$ coordinates of its input to the nonnegative orthant.

\paragraph{PDLP}
Our high-level presentation of PDLP follows the specifications of the cuPDLP algorithm due to~\citet{lu2025cupdlp}, which is a GPU-friendly version of PDLP.\footnote{The only technical differences between PDLP and cuPDLP are in the restart and termination criteria; the original version of PDLP~\citep{applegate2021practical} used a sequential trust-region method ill-suited for GPU acceleration.} The full description of (cu)PDLP, based on~\citet{lu2025cupdlp}, is given in Algorithm~\ref{alg:pdlp}. The first step is preconditioning of the LP data (Algorithm~\ref{alg:preconditioning}), which applies ten iterations of Ruiz scaling~\citep{ruiz2001scaling} followed by Pock-Chambolle scaling~\citep{pock2011diagonal}. The iterates of PDLP follow a trajectory dictated by step-size-weighted averages of PDHG steps, and the averaging is restarted whenever a trajectory is deemed to be making progress too slowly (measured via KKT error). (The moving average is also adaptive in the sense that if the actual PDHG iterate has a better KKT error, it is used instead of the averaged iterate.) Whenever a restart is triggered, the primal weight used for PDHG steps is updated via an exponential smoothing term (line~\ref{line:primal-weight-update}). Precise specifications of the termination ($\terminate$) and restart ($\mathtt{res}$) criteria, including definitions of KKT error $\kkt_{\omega}(\cdot)$, are in Appendix~\ref{app:pdlp}.
Line~\ref{line:pdhg_step} is the actual PDHG update. The adaptive PDHG procedure (detailed in Algorithm~\ref{alg:adaptivepdhg} in Appendix~\ref{app:pdlp}) conducts a line search for a step size $\eta$ that lies below a bound prescribed by the convergence theory for PDHG~\citep{chambolle2016ergodic}.

We focus on the learnability of two parameters: the {\em Pock-Chambolle} preconditioning parameter $\alpha$ (Alg.~\ref{alg:preconditioning}, line~\ref{line:pockchambolle}) and the {\em primal-weight update smoothing} parameter $\theta$ (Alg.~\ref{alg:pdlp}, line~\ref{line:primal-weight-update}).~\citet{lu2025overview} state that these are critical hyperparameters affecting performance. A key challenge, evidently, is that effects of these parameters propagate through the algorithm's entire execution trace.

\begin{algorithm}[t]
{\small
\caption{$\mathtt{PDLP}_{\alpha,\theta}(\vec{c},\vec{K},\vec{q},\vec{\ell},\vec{u})$}\label{alg:pdlp}
\begin{algorithmic}[1]
\State $\mathtt{precondition}_{\alpha}(\vec{c},\vec{K},\vec{q},\vec{\ell},\vec{u})$
\State $\vec{x}^{0,0},\vec{y}^{0,0}\leftarrow\vec{0}_n,\vec{0}_{m_1+m_2}$;~$k\leftarrow 0$;~$\widehat{\eta}^{0,0}\leftarrow 1/\norm{K}_{\infty}$;~$\omega^{0}\leftarrow\norm{\vec{c}}_2/\norm{\vec{q}}_2$ {\bf if} $\norm{\vec{c}}_2,\norm{\vec{q}}_2\ge10^{-12}$ {\bf else } $1$
\For{$s = 0,\ldots, S-1$}
    \For{$t = 0,\ldots, T-1$}
        \State $\vec{x}^{s,t+1},\vec{y}^{s,t+1},\eta^{s,t+1},\widehat{\eta}^{s,t+1}\leftarrow\mathtt{adaptivePDHG}(\vec{x}^{s,t},\vec{y}^{s,t},\omega^s,\widehat{\eta}^{s,t},k)$\label{line:pdhg_step}
        \State $\overline{\vec{x}}^{s,t+1},\overline{\vec{y}}^{s,t+1}\leftarrow\sum_{i=1}^{t+1}\eta^{s,i}\cdot(\vec{x}^{s,i},\vec{y}^{s,i})/\sum_{i = 1}^{t+1}\eta^{s,i}$
        \State $\widehat{\vec{x}}^{s,t+1},\widehat{\vec{y}}^{s, t+1}\leftarrow \vec{x}^{s, t+1}, \vec{y}^{s, t+1}$ {\bf if}~$\tfrac{\kkt_{\omega^s}(\vec{x}^{s, t+1},\vec{y}^{s, t+1})}{\kkt_{\omega^s}(\overline{\vec{x}}^{s, t+1},\overline{\vec{y}}^{s, t+1})} < 1$ {\bf else}~$\overline{\vec{x}}^{s, t+1}, \overline{\vec{y}}^{s, t+1}$

        \State $k\leftarrow k+1$
        \If{$\mathtt{term}(\vec{r}_1^{-1}*\vec{x}^{s,t+1},\vec{r}_2^{-1}*\vec{y}^{s,t+1})$~{\bf or}~$\mathtt{res}(\widehat{\vec{x}}^{s,t+1},\widehat{\vec{y}}^{s,t+1},\widehat{\vec{x}}^{s,t},\widehat{\vec{y}}^{s,t}, \vec{x}^{s,0},\vec{y}^{s,0},\omega^s)$}\label{line:termination+restart}\looseness-1
            \State {\bf break}{\color{gray}\Comment{$\vec{r}_1$ and $\vec{r}_2$ accumulate the effects of scaling in preconditioning (Alg.~\ref{alg:preconditioning})}}
        \EndIf
    \EndFor
    \State $\vec{x}^{s+1,0},\vec{y}^{s+1,0}\leftarrow \widehat{\vec{x}}^{s, t}, \widehat{\vec{y}}^{s, t}$;~$\widehat{\eta}^{s+1, 0}\leftarrow\widehat{\eta}^{s, t}$
    
    \State {\footnotesize$\omega^{s+1}\leftarrow\exp\!\big(\theta\log\tfrac{\norm{\vec{y}^{s+1,0} - \vec{y}^{s,0}}_2}{\norm{\vec{x}^{s+1,0} - \vec{x}^{s,0}}_2}+(1-\theta)\log\omega^{s}\big)$~{\bf if}~${\norm{\vec{y}^{s+1,0} - \vec{y}^{s,0}}_2,\norm{\vec{x}^{s+1,0} - \vec{x}^{s,0}}_2>10^{-12}}$~{\bf else}~$\omega^{s}$}\label{line:primal-weight-update}
\EndFor
\State \Return $(\vec{r}_1^{-1}*\vec{x}^{s,0},\vec{r}_2^{-1}*\vec{y}^{s,0})$ 
\end{algorithmic}
}
\end{algorithm}
\begin{algorithm}[t]
{\small
\caption{$\mathtt{precondition}_{\alpha}(\vec{c},\vec{K},\vec{q},\vec{\ell},\vec{u})$}\label{alg:preconditioning}
\begin{algorithmic}[1]
\State $\vec{r}_1,\vec{r}_2\leftarrow\vec{1}_{m_1+m_2},\vec{1}_{n}$
\For{$\_ = 1,\ldots,10$}
\State $\vec{D}_1\leftarrow \operatorname{diag}(\norm{\vec{K}_{1,\cdot}}_{\infty}^{-1/2},\ldots,\norm{\vec{K}_{m_1+m_2,\cdot}}_{\infty}^{-1/2})$;~$\vec{D}_2\leftarrow\operatorname{diag}(\norm{\vec{K}_{\cdot,1}}_{\infty}^{-1/2},\ldots,\norm{\vec{K}_{\cdot,n}}_{\infty}^{-1/2})$
\State  $\vec{K}\leftarrow\vec{D}_1\vec{K}\vec{D}_2$;~$\vec{c}, \vec{q}^\top, \vec{u}, \vec{\ell}\leftarrow\vec{D}_2\vec{c}, \vec{q}^\top\vec{D}_1, \vec{D}_2^{-1}\vec{u}, \vec{D}_2^{-1}\vec{\ell}$;~$\vec{r}_1,\vec{r}_2\leftarrow\vec{D}_1^{-1}\vec{r}_1, \vec{D}_2^{-1}\vec{r}_2$
\EndFor
\State $\vec{D}_1\leftarrow \operatorname{diag}(\norm{\vec{K}_{1,\cdot}}_{2-\alpha}^{-1/2},\ldots,\norm{\vec{K}_{m_1+m_2,\cdot}}_{2-\alpha}^{-1/2})$;~$\vec{D}_2\leftarrow\operatorname{diag}(\norm{\vec{K}_{\cdot,1}}_{\alpha}^{-1/2},\ldots,\norm{\vec{K}_{\cdot,n}}_{\alpha}^{-1/2})$\label{line:pockchambolle}
\State $\vec{K}\leftarrow\vec{D}_1\vec{K}\vec{D}_2$;~$\vec{c}, \vec{q}^\top, \vec{u}, \vec{\ell}\leftarrow\vec{D}_2\vec{c}, \vec{q}^\top\vec{D}_1, \vec{D}_2^{-1}\vec{u}, \vec{D}_2^{-1}\vec{\ell}$;~$\vec{r}_1,\vec{r}_2\leftarrow\vec{D}_1^{-1}\vec{r}_1, \vec{D}_2^{-1}\vec{r}_2$
\end{algorithmic}
}
\end{algorithm}

\paragraph{Generalization}

Each instance LP is given by its primal data $x = (\vec{c}, \vec{G}, \vec{h}, \vec{A}, \vec{b}, \vec{\ell}, \vec{u})$ and is drawn from an underlying distribution $D$ over LP instance space $\cX$. A {\em generalization} (or {\em sample complexity}) guarantee bounds the error $\varepsilon$ (uniformly over all parameter settings) between performance on a training set of size $N$ and expected future performance on an unseen test instance: $\mathbb{P}_{x_1,\ldots, x_N\sim D}(\sup_{\phi\in\Phi}|\frac{1}{N}\sum_{i=1}^N u_{\phi}(x_i) - \E_{x\sim D}[u_{\phi}(x)]|\le \varepsilon)\ge 1-\delta$. Generalization is controlled by {\em pseudo-dimension}~\citep{pollard2012convergence}: if $\cU = \{u_{\phi} : \cX\to [0, T] \mid \phi\in\Phi\}$ is the class of (bounded) performance measures (in our case, iteration count) with hyperparameter space $\Phi$, $\Pdim(\cU)$ is the largest integer $N$ for which there exist $N$ LP instances $x_1,\ldots, x_N$ and $N$ thresholds $\gamma_1,\ldots,\gamma_N$ such that for every sign pattern $\sigma\in\{-1,1\}^N$, there exists $\phi\in\Phi$ such that $\mathrm{sign}(u_{\phi}(x_i) - \gamma_i) = \sigma_i$ for all $i$. For a fixed training set size $N$, the resulting generalization error is $\varepsilon_{\cU}(N,\delta) \le T\sqrt{(\Pdim(\cU) +\log(1/\delta))/N}$. For a fixed error threshold $\varepsilon$, the {\em sample complexity} is $N_{\cU}(\varepsilon,\delta) = O((T/\varepsilon)^2(\Pdim(\cU) + \log(1/\delta)))$.

\section{Tuning the Step Size and Primal Weight in Vanilla PDHG}\label{sec:pdhg}

In this section we study the sample complexity of directly tuning the step size $\eta$ and primal weight $\omega$ of the base PDHG algorithm. The performance functions, which measure the number of PDHG iterations, are given by $u_{\eta,\omega}(\vec{c},\vec{G},\vec{h},\vec{A},\vec{b},\vec{\ell},\vec{u}) = \min\{t \le T : \terminate(\vec{x}^{(t)}, \vec{y}^{(t)})=\mathtt{true}\}$ if such a $t$ exists; $T$ otherwise, where $\vec{x}^{(t)}, \vec{y}^{(t)}$ are the PDHG iterates.

Lemma~\ref{lemma:vanilla_pdhg_iterates} pins down the structure of the iterates $\vec{x}^{(t)}, \vec{y}^{(t)}$ at each round as $(\eta,\omega)$ vary. The analysis is similar to recent generalization guarantees for parameter tuning in gradient descent~\citep{sharma2025gradient}, but we must reckon with the additional structural complications due to projections.

\begin{lemma}\label{lemma:vanilla_pdhg_iterates} 
    There are $O(2^{2T}(L+U+m_1)^{2T}T^{4T})$ algebraic curves of degree $\le 2T$ that partition $\R_{\ge 0}^2$ into connected components such that for each connected component $\cC$, there exist rational functions $g_{i,t}^{\cC}, h_{j,t}^{\cC} : \cC\to\R$, for all $i \in \{1,\ldots, n\}$, $j \in \{1,\ldots, m_1+m_2\}$, and $t \in \{0,\ldots, T\}$,  with $\deg(g_{i,t}^{\cC}),\deg(h_{j,t}^{\cC})\le 2t$, such that $x_i^{(t)} = g_{i,t}^{\cC}(\eta,\omega)$ and $y_j^{(t)} = h_{j,t}^{\cC}(\eta,\omega)$ for all $(\eta,\omega)\in\cC$.
\end{lemma}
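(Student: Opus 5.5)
We argue by induction on $t$, refining a partition of the parameter space round by round. Write $\tau=\eta/\omega$ and $\sigma=\eta\omega$; both are rational (indeed degree-one numerator and denominator) in $(\eta,\omega)$ on $\R_{>0}^2$. The inductive hypothesis at round $t$ is the following. There is a finite collection of algebraic curves $\mathcal{P}_t$ such that on each connected component $\cC$ of the complement of $\mathcal{P}_t$, every coordinate $x_i^{(s)}$ and $y_j^{(s)}$ with $s\le t$ agrees with a rational function of degree $\le 2s$. The base case $t=0$ is trivial: the iterates are the constant $\vec{0}$ vectors and there are no curves.

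For the inductive step, fix a component $\cC$ from round $t$. Consider the pre-projection primal vector $\vec{z}=\vec{x}^{(t)}-\tau(\vec{c}-\vec{K}^\top\vec{y}^{(t)})$. Each coordinate of $\vec{z}$ is rational on $\cC$. Its degree grows by at most one relative to $\vec{y}^{(t)}$ and $\vec{x}^{(t)}$ (multiplication by $\eta/\omega$), and combining the fractions keeps it within $2t+1$. The projection $\proj_X$ acts coordinatewise. Coordinate $i$ changes form only when $z_i=\ell_i$ (for finite $\ell_i$) or $z_i=u_i$ (for finite $u_i$). Clearing denominators turns each such condition into an algebraic curve $p(\eta,\omega)=0$ of degree $\le 2t+1$, and this gives at most $L+U$ new curves. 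Within each sub-cell cut out by these curves, the pattern of clamped/unclamped coordinates is fixed. So $x_i^{(t+1)}$ is either a constant ($\ell_i$ or $u_i$) or the rational function $z_i$.

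We then do the same for the dual. We form $\vec{w}=\vec{y}^{(t)}+\sigma(\vec{q}-\vec{K}(2\vec{x}^{(t+1)}-\vec{x}^{(t)}))$, which is rational of degree $\le 2t+2$ on each refined cell. The first $m_1$ coordinates are clamped at $0$, which contributes at most $m_1$ further curves $w_j=0$. The equality-constraint coordinates need no splitting. After this step both $\vec{x}^{(t+1)}$ and $\vec{y}^{(t+1)}$ are rational of degree $\le 2(t+1)$ on every cell. Checking carefully that the degree really stays $\le 2t$ rather than blowing up multiplicatively is the one delicate bookkeeping step. To handle it, we keep the numerator and denominator degrees separately. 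The denominators only ever accumulate powers of $\omega$, since $\tau$ and $\sigma$ introduce only $\omega^{\pm1}$. Hence each update raises the total degree by at most an additive constant: one per half-step, two per round.

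The remaining task, which we expect to be the main obstacle, is counting the curves. Curves are defined only relative to the cell they refine, so the naive count multiplies by the number of cells at each round. We therefore bound, at each round, the number of connected components created by the new curves using a standard Warren/Milnor-type bound. For $k$ curves of degree $\le d$ in the plane, there are $O((kd)^2)$ components. With at most $(L+U+m_1)$ new curves per existing cell, each of degree $\le 2T$, this gives a multiplicative blowup of at most $O(((L+U+m_1)\cdot 2T)^2)$ cells per round. Iterating over $T$ rounds, the total number of distinct curves used is bounded by (number of cells) $\times$ $(L+U+m_1)$, summed over rounds. That total is $O(2^{2T}(L+U+m_1)^{2T}T^{4T})$, after absorbing lower-order factors such as $T^{2T}$ into $T^{4T}$. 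Taking the union of all these curves globally, as polynomials in $(\eta,\omega)$, gives a single partition of $\R_{\ge0}^2$ whose components are each contained in a cell of every round. This yields the claimed piecewise-rational structure.
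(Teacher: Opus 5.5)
You follow the paper's route: induction on $t$, cutting each cell by the $L+U$ primal clamping curves and the $m_1$ dual clamping curves, a Warren-type bound on how many cells each round creates, and a global union of all curves at the end. The per-cell counting step, however, does not hold as you justify it. Warren's bound counts the components of $\mathbb{R}^2\setminus Z$, not of $\mathcal{C}\setminus Z$ for a cell $\mathcal{C}$ whose boundary is made of earlier curves. A single new curve can cut a non-convex cell into more pieces than it cuts the plane: a line crossing both arms of a U-shaped cell gives three pieces, not two. So the number of children of $\mathcal{C}$ cannot be bounded using only its $L+U+m_1$ new curves.

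The repair, which is what the paper does, is to note that each child of $\mathcal{C}$ is a connected component of the complement of \emph{all} curves along $\mathcal{C}$'s lineage. These are the curves introduced for its ancestors plus the new ones, at most $(L+U+m_1)(t+1)$ curves of degree $\le 2(t+1)$. Warren then gives $O\bigl(((L+U+m_1)t)^2(2t)^2\bigr)$ children per cell, and the product of these factors over $t\le T$ is where $2^{2T}(L+U+m_1)^{2T}T^{4T}$ comes from. The $T^{4T}$ is therefore not slack into which a $T^{2T}$ gets absorbed; it is the price of the lineage curves. Your final bound is correct, but only with this corrected count.

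Your degree bookkeeping also needs one more ingredient. Knowing only that all denominators are powers of $\omega$ does not give an increase of one per half-step. For example, $\omega^{-2t}+\eta^{2t+1}\omega^{-1}$ has degree $4t$ even though each summand has degree at most $2t+1$. What makes the count work is that the power of $\omega$ never exceeds one. By induction, $y^{(t)}$ and $\omega\, x^{(t)}$ are polynomials of degree at most $2t$ on each cell. This holds because every factor $\eta/\omega$ in the primal step is followed by a factor $\eta\omega$ in the dual step, and clamping to $\ell_i$, $u_i$, or $0$ preserves the property. Multiplying the primal clamping conditions by $\omega>0$ then gives polynomial curves of degree at most $2t+1$, and the dual ones have degree at most $2t+2$. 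The paper asserts the degree bound directly from the inductive hypothesis, so your instinct that this step needs checking is right; the invariant above is what closes it.
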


\begin{proof} The proof is by induction on $t$.
Initial primal-dual iterates are constant (degree-zero) in $\eta,\omega$. Now, fix a particular connected component $\cC$ from the partition of $\R^2_{\ge 0}$ obtained at round $t$.
Expanding the primal update prior to projection yields, for all $i = 1,\ldots,n$, $\widetilde{x}_i^{(t+1)} \coloneq x_i^{(t)} - \frac{\eta}{\omega}\left(c_i - \vec{K}_{\cdot,i}^\top\vec{y}^{(t)}\right) = x_i^{(t)} - c_i\frac{\eta}{\omega} + \textstyle\sum_{j=1}^{m_1+m_2}K_{j,i}y_j^{(t)}\frac{\eta}{\omega}.$
If $\widetilde{x}_i^{(t+1)}\le\ell_i$, $x_i^{(t+1)} = \ell_i$ and if $\widetilde{x}_i^{(t+1)}\ge u_i$, $x_i^{(t+1)} = u_i$. Otherwise, $x_i^{(t+1)} = \widetilde{x}_i^{(t+1)}$. So, the two algebraic curves $\widetilde{x}_i^{(t+1)}-\ell_i = 0$ and $\widetilde{x}_i^{(t+1)}-u_i = 0$---which are both of degree at most $2+2t = 2(t+1)$ by the inductive hypothesis---further partition $\cC$ such that in each connected component of $\cC$, $x_i^{(t+1)}$ is a rational function of $\eta,\omega$ of degree at most $2(t+1)$. (If either $\ell_i = -\infty$ or $u_i = \infty$ we do not include those curves in our partition of $\R_{\ge 0}^2$.)
We now expand the dual update---prior to projection---under the assumption that the projection for each $x_i^{(t)}$ is inactive. This assumption is without loss of generality for our degree analysis, since if a projection is active for any $x_i^{(t)}$ the degree can only decrease. We have, for all $j = 1,\ldots, m_1+m_2$,
$\widetilde{y}^{(t+1)}_j \coloneq y_j^{(t)} + \eta\omega(q_j - \vec{K}_{j,\cdot}^\top(2\vec{x}^{(t+1)} - \vec{x}^{(t)})) 
        = y_j^{(t)} + \eta\omega(q_j - \textstyle\sum_{i=1}^n K_{j, i}\big(2x_i^{(t+1)} - x_i^{(t)}\big)) 
        = y_j^{(t)} + \eta\omega(q_j - \textstyle\sum_{i=1}^{n}K_{j, i}(2(x_i^{(t)} - \frac{\eta}{\omega}(c_i - \vec{K}_{\cdot,i}^\top\vec{y}^{(t)})) - x_i^{(t)}))
        =y_j^{(t)} + \eta\omega q_j - \textstyle\sum_{i=1}^{n}(2K_{j, i}\eta\omega x_i^{(t)} - 2K_{j, i}\eta^2c_i + \textstyle\sum_{j'=1}^{m_1+m_2}2K_{j, i}K_{j',i}\eta^2y_{j'}^{(t)}- K_{j, i}\eta\omega x_i^{(t)}).$
For $j = 1,\ldots, m_1$, if $\widetilde{y}_j^{(t+1)}\le 0$ then $y_j^{(t+1)} = 0$, and $y_j^{(t+1)} = \widetilde{y}_j^{(t+1)}$ otherwise. For $j = m_1+1,\ldots, m_1+m_2$, $y_j^{(t+1)} = \widetilde{y}_j^{(t+1)}$. So, the algebraic curve $\widetilde{y}_i^{(t+1)} = 0$, which is of degree at most $2+2t = 2(t+1)$ by the inductive hypothesis, further partitions $\cC$ such that in each connected component of $\cC$, $y_j^{(t+1)}$ is a rational function of $\eta,\omega$ of degree at most $2(t+1)$. A careful counting argument (see Appendix~\ref{app:pdhg}) completes the proof. 
\end{proof}

Lemma~\ref{lemma:vanilla_pdhg_termination} analyzes the termination criteria, which requires primal feasibility, dual feasibility, and sufficiently small duality gap of the candidate iterates. (Precise definitions are in Appendix~\ref{app:pdlp}, but the key metrics are defined within the proceeding proof.) A key aspect of the analysis is considering decision boundaries so that the dual variable $\vec{\lambda} = \proj_{\Lambda_1\times\cdots\Lambda_n}(\vec{c} - \vec{K}^\top\vec{y})$ is well-behaved.

\begin{lemma}\label{lemma:vanilla_pdhg_termination}
    Fix a connected component $\cC$ in the partition of $\R_{\ge 0}^2$ established in Lemma~\ref{lemma:vanilla_pdhg_iterates}. There are $n^{O(1)} (2(L+U+m_1)T)^{O(T)}$ additional curves that partition $\cC$ such that for all $(\eta,\omega)$ within a connected component of $\cC$, $\terminate(\vec{x}^{(t)}, \vec{y}^{(t)})$ is invariant for all $t\le T$.
\end{lemma}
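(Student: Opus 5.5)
Within a fixed component $\cC$, Lemma~\ref{lemma:vanilla_pdhg_iterates} gives rational representations $x_i^{(t)}=g_{i,t}^{\cC}(\eta,\omega)$ and $y_j^{(t)}=h_{j,t}^{\cC}(\eta,\omega)$ of degree $\le 2t$ for every $t\le T$. The plan is to write each quantity entering $\terminate(\vec{x}^{(t)},\vec{y}^{(t)})$ as a piecewise-rational function of $(\eta,\omega)$ on $\cC$, listing every curve where a piece changes. Then we count the curves at which each threshold comparison in $\terminate$ can flip. I will use the standard form of the criterion (Appendix~\ref{app:pdlp}): a relative primal residual $\|\vec{A}\vec{x}-\vec{b}, (\vec{h}-\vec{G}\vec{x})^+\|_2\le\epsilon(1+\|\vec{q}\|_2)$, a relative dual residual involving $\vec{c}-\vec{K}^\top\vec{y}-\vec{\lambda}$ and the sign violations of $\vec{y}_{1:m_1}$, and a relative duality gap $|\vec{q}^\top\vec{y}+\vec{\ell}^\top\vec{\lambda}^+-\vec{u}^\top\vec{\lambda}^- -\vec{c}^\top\vec{x}|\le\epsilon(1+|\cdot|+|\cdot|)$.

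\textbf{Step 1 (linearize the nonsmooth pieces).} For each $t$, the positive parts $(h_k-\vec{G}_{k,\cdot}\vec{x}^{(t)})^+$ are rational of degree $\le 2t$ once we add the $m_1$ curves $h_k-\vec{G}_{k,\cdot}\vec{x}^{(t)}=0$. The dual slack $\vec{\lambda}=\proj_{\Lambda}(\vec{c}-\vec{K}^\top\vec{y}^{(t)})$ needs the curves $c_i-\vec{K}_{\cdot,i}^\top\vec{y}^{(t)}=0$, but only for coordinates where $\Lambda_i$ is a half-line. This adds $O(L+U)$ curves. After that, $\vec{\lambda}$, $\vec{\lambda}^{+}$ and $\vec{\lambda}^{-}$ are all rational of degree $\le 2t$ on each subcell. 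Absolute values in the gap and objectives are handled the same way: one extra curve each (the zero set of the objective values and of the gap). Over all $t\le T$ this gives $O(T(L+U+m_1+n))$ curves, each of degree $\le 2T$.

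\textbf{Step 2 (remove the square roots).} Each comparison has the form $\sqrt{P}\le \epsilon(1+\sqrt{Q})$ or $|R|\le\epsilon(1+|S|+|S'|)$, with $P,Q,R,S,S'$ rational of degree $O(T)$ on the subcell. I will square twice, isolating the radicals, to turn each comparison into a polynomial sign condition. Clearing denominators, this yields a single curve whose degree is polynomial in $n$ times $O(T)$: the sums of squares have $n^{O(1)}$ terms, and the common denominator of the $g,h$'s has degree $O(nT)$ in the worst case, since the coordinates need not share denominators. That gives a constant number of curves per $t$, hence $O(T)$ further curves. Every curve from Steps 1 and 2 has degree at most $D=n^{O(1)}T$.

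\textbf{Step 3 (count connected components).} The quantity the statement bounds is really the number of cells. So I will bound the number of connected components of $\cC$ minus all $M=O(T(L+U+m_1+n))$ curves of degree $\le D$ via a Warren/Milnor-type bound, roughly $O((MD)^2)$ regions for curves in the plane. The statement is loose enough (the exponent $O(T)$ on $2(L+U+m_1)T$) that this bound, or a cruder B\'ezout-style product bound, fits inside $n^{O(1)}(2(L+U+m_1)T)^{O(T)}$. On each resulting cell, every sign condition is fixed, so $\terminate(\vec{x}^{(t)},\vec{y}^{(t)})$ is invariant for all $t\le T$.

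\textbf{Main obstacle.} I expect the hardest part to be Step 2's degree bookkeeping. Clearing the (possibly distinct) denominators of the $n+m_1+m_2$ rational coordinates inside norms and squared terms inflates degrees. I need the resulting curve degrees to be $n^{O(1)}T^{O(1)}$ and not exponential in $T$, so that the final count keeps the claimed $n^{O(1)}$ dependence. To handle this, I would first try to show, by tracing the proof of Lemma~\ref{lemma:vanilla_pdhg_iterates}, that all iterates in a cell share the denominator $\omega^{t}$. The primal update divides only by $\omega$ and the dual multiplies by $\eta\omega$, so every coordinate should be a polynomial in $\eta,\omega$ divided by a power of $\omega$. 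If that holds, denominators clear trivially and degrees stay $O(T)$.
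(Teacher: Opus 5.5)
Your proposal is correct and follows essentially the paper's route. The paper likewise adds algebraic curves fixing the signs of $(h_j-\vec{G}_{j,\cdot}^\top\vec{x}^{(t)})^+$, of the clamps defining $\vec{\lambda}^{(t)}$, and of the absolute values in the gap test, so that each termination comparison becomes one polynomial inequality per round, and then collects these over $t\le T$. Your Step 3 cell count is extra: the statement bounds the number of curves, and your count $O(T(L+U+m_1+n))$ already fits it.

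Two small points. First, for coordinates with both bounds finite ($\Lambda_i=\R$) you also need the curve $c_i-\vec{K}_{\cdot,i}^\top\vec{y}^{(t)}=0$ to make $\lambda_i^{\pm}$ rational; the paper adds $\lambda_i^{(t)}=0$ for all $i$, and your count absorbs this. Second, your shared-denominator hope is true: on each cell $\omega\,x_i^{(t)}$ and $y_j^{(t)}$ are polynomials in $(\eta,\omega)$, since the updates only multiply by $\eta/\omega$ and $\eta\omega$. This cleanly settles the degree bookkeeping that the paper passes over.
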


\begin{proof}
    Fix a connected component $\cC$ given by Lemma~\ref{lemma:vanilla_pdhg_iterates}. In this cell, $x_i^{(t)}$ and $y_i^{(t)}$ are rational functions of $\eta,\omega$ of degree at most $2t$, for all $t = 0,\ldots, T$. We aim to find a further refinement of this component such that for all $\eta,\omega$ in a connected component of the refinement, primal feasibility, dual feasibility, and duality gap of all iterates $(\vec{x}^{(t)},\vec{y}^{(t)})$ are invariant.
    
    First, we handle primal feasibility, which is the condition $\norm{\vec{A}\vec{x}^{(t)} - \vec{b}}_2^2 + \norm{(\vec{h} - \vec{G}\vec{x}^{(t)})^+}_2^2 \le \gamma_p^2$, where $\gamma_p\coloneq \varepsilon_{\tol}(1+\norm{\vec{q}}_2)$. 
    For each $j = 1,\ldots, m_1$, add the curve $h_j - \vec{G}_{j, \cdot}^\top\vec{x}^{(t)} = 0$ to $\cC$. (Since each $x_i^{(t)}$ is a rational function of degree $\le 2t$, $h_j - \vec{G}_{j,\cdot}^{\top}\vec{x}^{(t)} = 0$ is an algebraic curve---after clearing denominators---of degree at most $2t$.) In each of the connected components induced by these curves, the sign of $h_j - \vec{G}_{j,\cdot}^\top\vec{x}^{(t)}$ is invariant and hence $(h_j - \vec{G}_{j,\cdot}^\top\vec{x}^{(t)})^+$ is a degree $2t$ rational function.
    Finally, within each such component, add the curve $\norm{\vec{A}\vec{x}^{(t)} - \vec{b}}_2^2 + \sum_{j=1}^{m_1}((h_j-\vec{G}_{j, \cdot}^\top\vec{x}^{(t)})^+)^2 \le \gamma_p^2,$ which is of degree $4t$. Now, within each component, primal feasibility of $(\vec{x}^{(t)},\vec{y}^{(t)})$ is invariant.

    To handle dual feasibility and duality gap, we need to get a handle on dual variable $\vec{\lambda}^{(t)} = \proj_{\Lambda_1\times\cdots\times\Lambda_n}(\vec{c}-\vec{K}^\top\vec{y}^{(t)})$ first. For that, we put down, for each $i\in\{1,\ldots,n\}$ such that either $\ell_i = -\infty$ and $u_i\in\R$ or $\ell_i\in\R$ and $u_i=\infty$, the degree $2t$ curve $c_i - \vec{K}_{\cdot, i}^\top\vec{y}^{(t)} = 0$. Within each connected component of $\cC$ induced by these curves, $\lambda^{(t)}_i$ is thus a degree $2t$ rational function in $(\eta,\omega)$.
    Now, with $\gamma_d\coloneq \varepsilon_{\tol}(1+\norm{\vec{c}}_2)$, dual feasibility is the condition $\sum_{i=1}^n (c_i - \vec{K}_{\cdot, i}^\top\vec{y}^{(t)} - \lambda_i^{(t)})^2\le\gamma_d^2,$
    which is a degree $4t$ curve (after clearing denominators). Within each region induced by these curves, dual feasibility of $(\vec{x}^{(t)},\vec{y}^{(t)})$ is invariant.
    Finally, we handle the duality gap condition, which reads $|\vec{q}^\top\vec{y}+\vec{\ell}^\top\vec{\lambda}^{+}-\vec{u}^\top\vec{\lambda}^{-}-\vec{c}^\top\vec{x}| \le\gamma_g$, where $\gamma_g \coloneq \varepsilon_{\tol}(1+\abs{\vec{q}^\top\vec{y}+\vec{\ell}^\top\vec{\lambda}^+-\vec{u}^\top\vec{\lambda}^-}+\abs{\vec{c}^\top\vec{x}})$. Put down curves $\lambda_i^{(t)} = 0$ for all $i = 1,\ldots, n$. In cells where $\lambda_i^{(t)}< 0$ add the curves $\vec{q}^\top\vec{y}^{(t)} - \vec{u}^\top\vec{\lambda}^{(t)} - \vec{c}^\top\vec{x}^{(t)} = 0$ and $\vec{q}^\top\vec{y}^{(t)} - \vec{u}^\top\vec{\lambda}^{(t)} = 0$ and in cells where $\lambda_i^{(t)}\ge 0$ add the curves $\vec{q}^\top\vec{y}^{(t)} + \vec{\ell}^\top\vec{\lambda}^{(t)} - \vec{c}^\top\vec{x}^{(t)} = 0$ and $\vec{q}^\top\vec{y}^{(t)} + \vec{\ell}^\top\vec{\lambda}^{(t)} = 0$. Add the additional curve $\vec{c}^\top\vec{x}^{(t)} = 0$. Within the induced components, the signs of all absolute value terms in the duality gap condition are invariant. So, within each component the duality gap condition is an algebraic curve of degree $O(t)$, which induces the final partition over which the truth of the duality gap condition is invariant. Finally, collecting all curves over all rounds $t \le T$ (and carefully counting them), we obtain a partition such that $\terminate(\vec{x}^{(t)}, \vec{y}^{(t)})$ is invariant in each connected component for all $t$. 
\end{proof}

We can finally apply the general result of~\citet{balcan2024much} to obtain a pseudo-dimension bound.

\begin{theorem}\label{theorem:pdhg}
    $\Pdim(\{u_{\eta,\omega} : \eta,\omega > 0\}) = O(T\log(m_1nT))$.
\end{theorem}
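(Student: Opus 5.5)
Combining Lemma~\ref{lemma:vanilla_pdhg_iterates} and Lemma~\ref{lemma:vanilla_pdhg_termination} gives, for each fixed LP instance $x$, a family of algebraic curves in the parameter space $\R^2_{\ge 0}$. The curves have degree $O(T)$, and their number is
\[
M = O\big(2^{2T}(L+U+m_1)^{2T}T^{4T}\big)\cdot n^{O(1)}(2(L+U+m_1)T)^{O(T)} = (nm_1T)^{O(T)},
\]
using $L,U\le n$. Within each connected component of the resulting arrangement, the termination indicator $\terminate(\vec{x}^{(t)},\vec{y}^{(t)})$ is invariant for every $t\le T$. Hence $u_{\eta,\omega}(x)$, the first $t$ at which termination holds (or $T$), is constant on each component. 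So for each instance the dual function $(\eta,\omega)\mapsto u_{\eta,\omega}(x)$ is piecewise constant over an arrangement of $M$ curves of degree $d=O(T)$. This is exactly the piecewise-structured setting of \citet{balcan2024much}.

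I will then invoke their general result. Given $N$ instances and thresholds, the number of distinct sign patterns $(\mathrm{sign}(u_{\eta,\omega}(x_i)-\gamma_i))_i$ is at most the number of connected components of the overlaid arrangement of all $NM$ curves. For algebraic curves in the plane (a Warren/Milnor-type bound, or Bézout plus Euler characteristic), this is $O((NMd)^2)$. Shattering requires $2^N\le O((NMd)^2)$, so $N=O(\log M+\log d)=O(T\log(nm_1T))$ after absorbing $\log T$ and $\log n$ terms. The bound in the statement is written as $O(T\log(m_1nT))$, which matches.

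The main obstacle is bookkeeping rather than anything conceptual. There are two points to check. First, Lemma~\ref{lemma:vanilla_pdhg_termination} refines each component $\cC$ separately, and these refinements must be counted globally. I would handle this by taking the union over all $\cC$ of the curves (extended to the whole plane), which multiplies the curve count by at most the number of components of the first arrangement. That number is itself $\mathrm{poly}(M)$, so it only changes constants in the exponent. Second, the rational functions must be converted to polynomial curves by clearing denominators, and the poles (zeros of the denominators) must be added as extra curves; this keeps the degree at $O(T)$. With both points handled, taking logarithms yields the claimed $O(T\log(m_1nT))$.
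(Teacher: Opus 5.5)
Your proposal is correct and takes the same route as the paper, which simply feeds the curve arrangement from Lemmas~\ref{lemma:vanilla_pdhg_iterates} and~\ref{lemma:vanilla_pdhg_termination} into the general piecewise-structure result of \citet{balcan2024much}. Your explicit Warren count over the overlay of all $NM$ degree-$O(T)$ planar curves is the computation behind that invocation, yielding $N=O(\log M+\log T)=O(T\log(m_1nT))$. One detail to tighten: a shattering witness $(\eta,\omega)$ can lie on one of the curves, where the lemmas say nothing, so count the realizable sign conditions in $\{-1,0,+1\}$ of the predicate polynomials rather than only the connected components of the complement; this count is still $O((NMT)^2)$ in the plane, so the bound is unchanged.
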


\section{Tuning PDLP Parameters}\label{sec:pdlp}

In this section we analyze the sample complexity of tuning key hyperparameters of (cu)PDLP (Alg.~\ref{alg:pdlp}). Our analysis builds on recent advances in data-driven algorithm design for function classes with \emph{Pfaffian structure}, which strictly generalizes polynomial structure. Understanding the Pfaffian structure of the numerical computations in Alg.~\ref{alg:pdlp} is what allows us to characterize the behavior of the full execution of PDLP.
We summarize here the key tools from prior work needed for our analysis. \looseness-1

\begin{definition}[Pfaffian chains and functions~\citep{khovanskiui1991fewnomials}]
Let $U \subseteq \mathbb{R}^d$ be open. A sequence of functions
$f_1, f_2, \dots, f_q : U \to \mathbb{R}$
forms a \emph{Pfaffian chain} of length $q$ and degree at most $\alpha$ if for every $i \in \{1,\ldots,q\}$ and $j \in \{1,\ldots,d\}$,
$\frac{\partial f_i}{\partial x_j}(x)
=
P_{i,j}(x, f_1(x), \dots, f_i(x)),$
where each $P_{i,j}$ is a polynomial of degree at most $\alpha$.
A function $f : U \to \mathbb{R}$ is \emph{Pfaffian} with respect to this chain if
$
f(x) = Q(x, f_1(x), \dots, f_q(x)),
$
for some polynomial $Q$ of degree at most $\beta$.
We say $f$ has {\em Pfaffian complexity} $(q, \alpha, \beta)$.
\end{definition}

Any polynomial is Pfaffian with $q=0$. Exponentials $e^{ax}$ and logarithms $\log(ax)$ are Pfaffian over their domains. Periodic trigonometric functions like $\sin(ax)$ and $\cos(ax)$ are {\em not} Pfaffian. 
Pfaffian functions are closed under addition, multiplication, division (away from zero), and composition with $\exp$ and $\log$~\citep{balcan2025algorithm}. These properties will be used repeatedly.
%

\begin{definition}[Pfaffian piecewise structure~\citep{balcan2025algorithm}]
Function $f : \Phi \subseteq \mathbb{R}^d \to \mathbb{R}$ is \emph{$(k_F, k_G, q, M, \Delta, d)$-Pfaffian piecewise structured} if there exist $k_G$ Pfaffian {\em boundary} functions
and $k_F$ Pfaffian {\em piece} functions---all with Pfaffian complexity $(q, M, \Delta)$---such that on each region of $\Phi$ induced by the $k_G$-bit sign pattern of the boundary functions, $f$ coincides with a piece function.
\end{definition}

A key learning-theoretic result we will need is the following bound on the pseudo-dimension of piecewise-Pfaffian functions in terms of their structural complexity.

\begin{theorem}[Pseudo-dimension of Pfaffian piecewise classes~\citep{balcan2025algorithm}]
\label{thm:pfaffian-pdim}
Let $\mathcal{U} = \{u_\phi : \mathcal{X} \to \mathbb{Z}\}_{\phi \in \Phi \subseteq \mathbb{R}^d}$.
If for every $x \in \mathcal{X}$, the dual function $u_x^*(\phi) \coloneq u_\phi(x)$ is $(k_F, k_G, q, M, \Delta, d)$-Pfaffian piecewise structured, then
$\mathrm{Pdim}(\mathcal{U})\le
d^2q^2
+ 2dq \log(\Delta + M)
+ 2 \log(\Delta (k_F + k_G))
+ 16.$
\end{theorem}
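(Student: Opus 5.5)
The plan is the standard ``count sign patterns, compare with $2^N$'' argument, with Khovanskii's bound on connected components of semi-Pfaffian sets~\citep{khovanskiui1991fewnomials} playing the role that Warren's theorem plays for polynomials. Suppose instances $x_1,\ldots,x_N$ with thresholds $\gamma_1,\ldots,\gamma_N$ are pseudo-shattered by $\mathcal{U}$. Then the map $\phi\mapsto(\mathrm{sign}(u_{x_i}^*(\phi)-\gamma_i))_{i\le N}$ attains all $2^N$ sign patterns on $\Phi$. I will upper bound the number of distinct patterns it can realize and solve for $N$.

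\emph{Step 1 (reduction to sign patterns of Pfaffian functions).} For each $i$, the dual function $u_{x_i}^*$ is $(k_F,k_G,q,M,\Delta,d)$-Pfaffian piecewise structured. Collect all $Nk_G$ boundary functions of the $N$ instances. Also collect the $Nk_F$ shifted piece functions $f-\gamma_i$, one for each piece function $f$ of $u^*_{x_i}$. This gives $s\le N(k_F+k_G)$ Pfaffian functions $\phi\mapsto g_r(\phi)$ on $\R^d$. Fix a sign vector of all $s$ functions. On the corresponding set, each $u^*_{x_i}$ coincides with one fixed piece function, since the boundary signs are fixed. The sign of that piece minus $\gamma_i$ is then also fixed. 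Hence the number of realizable patterns is at most the number of nonempty sign conditions (equivalently, up to a standard perturbation, the connected components of the complement of $\bigcup_r\{g_r=0\}$) of these $s$ functions.

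\emph{Step 2 (Khovanskii-type counting).} I will invoke the bound that for $s$ Pfaffian functions in $d$ variables of complexity $(q,M,\Delta)$ over a common chain, the number of connected components of all sign conditions is at most
\[
2^{q(q-1)/2}\,(s\,d)^{d}\,\big(O(d(\Delta+M))\big)^{d+q},
\]
in the form used by~\citet{balcan2025algorithm}. The main obstacle is here: the chains for different instances $x_i$ are a priori different, and concatenating them would give chain length $Nq$ and a useless $2^{(Nq)^2}$ factor. I would address this by working, as in the framework of~\citet{balcan2025algorithm}, with a chain whose length $q$ is uniform over instances. For example, the chain functions depend on $x$ only through coefficients in the polynomials $P_{i,j}$ and $Q$, so that a single chain in $\phi$ serves all $x_i$. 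If that is not available, I would fall back on a per-component argument that applies Khovanskii's bound with each instance's own chain and the shared $d$-dimensional domain.

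\emph{Step 3 (solving the inequality).} Shattering gives
\[
N\le \tfrac{q(q-1)}{2}+d\log\!\big(N(k_F+k_G)d\big)+(d+q)\log\!\big(O(d(\Delta+M))\big).
\]
I then use the elementary fact that $N\le a+d\log N$ implies $N\le 2a+O(d\log d)$. Absorbing the $d$-dependent terms, and bounding $(d+q)\log(d(\Delta+M))$ and the $q^2$ term by $d^2q^2+2dq\log(\Delta+M)$, yields the stated bound $d^2q^2+2dq\log(\Delta+M)+2\log(\Delta(k_F+k_G))+16$. The additive constant $16$ collects the numeric slack from these estimates. The only delicate bookkeeping is tracking the constants so that $k_F+k_G$ enters only logarithmically and independently of $N$.
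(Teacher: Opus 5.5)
Your overall strategy is the right one, and Step~1 is fine: pool the $N(k_F+k_G)$ boundary and shifted piece functions, bound the number of realizable sign vectors by a Khovanskii-type component count, and solve $2^N\le(\cdots)$. The paper gives no proof of this statement; it imports it from \citet{balcan2025algorithm}.

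\textbf{Step~3.} It cannot produce the bound as printed when $d\ge 2$, and no argument can, because the printed statement is false for large $d$. Take $u_\phi(x)=\mathbf{1}[\langle\phi,x\rangle\ge 0]$ with $\phi,x\in\mathbb{R}^d$. Each dual function has one degree-$1$ polynomial boundary and two constant pieces, so it is $(2,1,0,M,1,d)$-Pfaffian piecewise structured. The pseudo-dimension is $d$ (the VC dimension of homogeneous halfspaces), while the right-hand side is $2\log 3+16<20$.

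The failure is visible in your own derivation. After solving, the $d\log(N(k_F+k_G))$ term leaves roughly $2d\log(k_F+k_G)$. This cannot be dominated by $2\log(\Delta(k_F+k_G))$, since $k_F+k_G$ (e.g.\ $k_F=2^{k_G}$) is unrelated to $q$ and $d$. For $q=0$, the terms $d\log(d(\Delta+M))$ and $O(d\log d)$ have nothing to be absorbed into except the constant $16$.

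The bound in \citet{balcan2025algorithm} is $d^2q^2+2dq\log(\Delta+M)+4dq\log d+2d\log(\Delta(k_F+k_G))+16d$. This is exactly what the paper itself instantiates at $d=2$ in Theorem~\ref{thm:pdim-theta-alpha-full}. It coincides with the printed version only at $d=1$, which is the case used in Theorems~\ref{thm:pdim-theta} and~\ref{thm:pdim-alpha-formal}. That is the statement your argument should target.

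\textbf{Step~2.} This step also has a genuine gap. Your primary fix is a non sequitur: chains whose defining polynomials $P_{i,j}$ differ are different functions (e.g.\ $e^{a\phi}$ for distinct $a$). In this paper the chains really are instance-dependent, e.g.\ $\mathsf{r}_j^{2-\alpha}=\mathsf{r}_j^2e^{-\alpha\log\mathsf{r}_j}$ and the logarithms of iterate norms in the $\theta$-update. So there is no common chain of length $q$, and concatenation gives length $Nq$ and a useless factor $2^{\Theta(N^2q^2)}$.

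The missing idea is the Warren/Karpinski--Macintyre localization. For small generic $\epsilon$, every connected component of $\mathbb{R}^d\setminus\bigcup_r\{g_r=0\}$ can be charged to a connected component of some system $\{g_r=\epsilon_r : r\in S\}$ with $|S|\le d$. Such a system involves functions from at most $d$ instances, so the merged chain has length at most $dq$. Khovanskii's bound then contributes a factor $2^{dq(dq-1)/2}$, and summing over the $\binom{N(k_F+k_G)}{\le d}$ subsets yields the $d\log(N(k_F+k_G))$ term.

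This is where $d^2q^2$ genuinely comes from. In your Step~3 it appears only as a loose overbound of $q(q-1)/2$, a symptom that this mechanism is absent. For $d=1$ your fallback does suffice: count the zeros of each function separately, using its own chain of length $q$.
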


\subsection{Main Pseudo-dimension Bounds: Tuning $\theta$ and $\alpha$ Individually and Simultaneously}

First, we analyze the dependence of PDLP on the smoothing parameter $\theta$ (Algorithm~\ref{alg:pdlp}; line~\ref{line:primal-weight-update}). Full proofs of results are in Appendix~\ref{app:pdlp}. We assume LP instances are such that $\norm{\vec{K}}_{\infty} > 1$.

\begin{theorem}[Pseudo-dimension bound for tuning $\theta$ in PDLP]
\label{thm:pdim-theta}
Fix an LP instance $x=(\vec{c},\vec{K},\vec{q},\vec{\ell},\vec{u})\in\cX$. Let $u_\theta:\mathcal X\to\mathbb{Z}$ be the number of iterations of Algorithm~\ref{alg:pdlp} using primal-weight update smoothing parameter $\theta$ (with all other hyperparameters---in particular the Pock-Chambolle $\alpha$---fixed to their default settings). 
Then, for each LP instance $x\in \mathcal X$, the dual function $u^*_{x}(\theta) \coloneq u_\theta(x)$
is $(k_F, k_G, q, M, \Delta, 1)$-Pfaffian piecewise structured with:
\[\begin{aligned}
&k_F \le 2^{k_G},\\
&k_G \le (ST)^{2.3}\log\norm{\vec{K}}_2(L + U + m_1 + 6 ) + 6S, \\ 
&q = O((ST)^{2.3}\log\norm{\vec{K}}_2),\\ 
&M, \Delta \le 2^{O((ST)^{2.3}\log\norm{\vec{K}}_2)}.
\end{aligned}\] Consequently, 
$$
\operatorname{Pdim}(\{u_\theta:\mathcal X\to\mathbb{Z}\mid \theta\in\Theta\})
=
O((ST)^{4.6}\widetilde{K}^2
+
(ST)^{2.3}\widetilde{K}(L+U+m_1)),
$$ where $\widetilde{K}=\sup_{\vec{K}\in\cX}\log\norm{\vec{K}}_2$ and $S, T$ are the outer and inner loop iteration limits of Algorithm~\ref{alg:pdlp}.
\end{theorem}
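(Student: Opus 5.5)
Since $\alpha$ is fixed, the preconditioning step (Alg.~\ref{alg:preconditioning}) does not depend on $\theta$. So $\vec{K},\vec{c},\vec{q},\vec{\ell},\vec{u},\vec{r}_1,\vec{r}_2$ are constants, and $\theta$ first enters at the primal-weight update (line~\ref{line:primal-weight-update}). The plan is an induction over the flattened execution trace $(s,t)$, $s<S$, $t<T$, in the spirit of Lemmas~\ref{lemma:vanilla_pdhg_iterates} and~\ref{lemma:vanilla_pdhg_termination}, with rational functions replaced by Pfaffian functions. The induction maintains a growing list of boundary functions in $\theta$ and a single Pfaffian chain. On every region cut out by the sign pattern of the boundaries, all branch decisions taken so far are fixed, and every quantity is a Pfaffian function of $\theta$ over the current chain: $\omega^s$, $\vec{x}^{s,t},\vec{y}^{s,t}$, $\eta^{s,t}$, $\widehat{\eta}^{s,t}$, the averages, and $\widehat{\vec{x}},\widehat{\vec{y}}$. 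The outer-loop count $s$ and inner count $t$ are recorded in the region too.

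\textbf{Outer step.} Write $\omega^{s+1}=\exp\big(\theta\log\rho_s+(1-\theta)\log\omega^s\big)$, where $\rho_s$ is the ratio of norms in line~\ref{line:primal-weight-update}. By induction, $\rho_s$ and $\omega^s$ are Pfaffian. Extending the chain by $\sqrt{\cdot}$ for the two norms, by $\log$ of the result and of $\omega^s$, and by one $\exp$ costs $O(1)$ chain entries per restart. The two threshold tests against $10^{-12}$ add $2$ boundaries, and the ratio test selecting $\widehat{\cdot}$ adds a few more; together these give the $6S$ term in $k_G$.

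\textbf{Inner step.} One call to $\mathtt{adaptivePDHG}$ is a PDHG step with step sizes $\eta/\omega^s$ and $\eta\omega^s$, and it inherits the projection branching of Lemma~\ref{lemma:vanilla_pdhg_iterates}. That contributes at most $L+U+m_1$ new boundaries (zero sets of pre-projection Pfaffian expressions) per step. The line search compares a quantity of the form $\norm{\Delta z}^2_\omega/(2|\Delta y^\top K\Delta x|)$ against $\eta$, which adds $O(1)$ boundaries per trial plus one absolute-value sign boundary. It also updates $\widehat\eta$ by a $k$-dependent polynomial formula such as $(1-(k+1)^{-0.3})\cdot$ and $(1+(k+1)^{-0.6})\cdot$ factors. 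The KKT-error comparisons, the restart criterion and $\mathtt{term}$ are handled exactly as in Lemma~\ref{lemma:vanilla_pdhg_termination}. Their positive parts, norms, and $\lambda$-projections give $O(L+U+m_1)$ sign boundaries plus a constant number of square-root chain entries, for at most $6$ further comparison boundaries per iteration. Summing over iterations gives the factor $(L+U+m_1+6)$.

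\textbf{Main obstacle.} The hardest part is bounding the number of line-search trials per iteration, which drives both the $(ST)^{2.3}\log\norm{\vec{K}}_2$ factor and the chain length. The approach is to show that each rejected trial shrinks $\eta$ by a factor $1-(k+1)^{-0.3}$, while acceptance is guaranteed once $\eta\le 1/\norm{\vec{K}}_2$. The starting value is at most $\widehat\eta^{0,0}\cdot\prod(1+(k+1)^{-0.6})$, and $\widehat\eta^{0,0}=1/\norm{\vec{K}}_\infty$ with $\norm{\vec{K}}_\infty>1$. Hence the number of rejections up to global iteration $k\le ST$ is at most about $\log(\norm{\vec{K}}_2\cdot\text{growth})/(k+1)^{-0.3}$. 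Summing over $k\le ST$ should give the total $O((ST)^{1.3}\cdot ST\cdot\log\norm{\vec{K}}_2)$, which is the $2.3$ exponent. This needs care with the growth factor, which contributes $\exp(O((ST)^{0.4}))$ and is absorbed. Each trial adds $O(1)$ chain functions and boundaries, and each new chain entry squares at most a bounded degree via the composition rules of~\citet{balcan2025algorithm}. The degrees therefore compound multiplicatively along the chain, giving $M,\Delta\le 2^{O(q)}$.

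\textbf{Conclusion.} Finally, $u^*_x(\theta)$ is constant on each sign region: it is the index at which termination first fires, so the piece functions are constants and $k_F\le 2^{k_G}$. Plugging into Theorem~\ref{thm:pfaffian-pdim} with $d=1$ gives $q^2+2q\log(M+\Delta)+2\log(\Delta(k_F+k_G))=O(q^2+k_G+q)$. Substituting the bounds and taking the supremum of $\log\norm{\vec{K}}_2$ over $\cX$ yields the stated $O((ST)^{4.6}\widetilde K^2+(ST)^{2.3}\widetilde K(L+U+m_1))$.
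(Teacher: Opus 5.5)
Your proposal is correct and follows essentially the same route as the paper. Every branch predicate becomes a boundary function: the per-trial projections and line-search tests, the KKT, termination and restart comparisons, and the $10^{-12}$ thresholds. Pfaffianity is then propagated along the fixed execution path on each sign region, with $O(1)$ new chain entries per backtracking trial and degrees $2^{O(q)}$. The line search is bounded via $\overline{\eta}\ge 1/\norm{\vec{K}}_2$ and shrinkage by $1-(k+1)^{-0.3}$ per rejection, and Theorem~\ref{thm:pfaffian-pdim} is applied with $d=1$.

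Your extra care only tightens the paper's bookkeeping: you note that the piece functions are constants, and you add explicit sign boundaries for the positive parts and $\vec{\lambda}$-projections in the termination and KKT checks. Your sharper $\exp(O((ST)^{0.4}))$ growth bound on $\widehat{\eta}$ in fact yields only $O((ST)^{1.3}\log\norm{\vec{K}}_2+(ST)^{1.7})$ trials in total, so the extra factor of $ST$ in your final summation is a harmless overcount.
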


\begin{proof}[Proof sketch]
All branching decisions (projections, decisions in the adaptive step-size procedure, the KKT error comparison in line~7, the termination and restart checks in line~9) can be written as sign conditions of the form $\tau(\theta)\ge 0$.
These define $k_G$ Pfaffian boundary functions.

Fix a region where all predicates have constant sign. The algorithm follows a fixed execution path within such a region. We then induct over iterations: primal and dual updates (line~5) are algebraic operations, averaging (line~6) involves summation and division, and the primal-weight update (line~12) involves $\log$ and $\exp$. All operations preserve Pfaffian structure.

Thus, all iterates and outputs are Pfaffian on each region. Counting sign patterns yields $k_F \le 2^{k_G}$. Pfaffian complexity bounds follow from carefully counting all operations. A key intermediate technical result we establish is a bound on the number of iterations in the adaptive step size procedure (Algorithm~\ref{alg:adaptivepdhg}), which performs a line search for the PDHG step size. While the original PDLP specification seems to leave open the possibility of an unbounded line search, in Lemma \ref{lemma:adaptive-pdhg-iter-bound} we show that the number of iterations is at most $O((ST)^{1.3}\widetilde{K})$. Finally, applying Theorem \ref{thm:pfaffian-pdim} to the established piecewise-Pfaffian structure gives the desired pseudo-dimension bound.
\end{proof}

Next, we present our main structural result for individual tuning of $\alpha$.

\begin{theorem}[Pseudo-dimension bound for tuning $\alpha$ in  PDLP]
\label{thm:pdim-alpha-formal}
Fix an LP instance $x = (\vec{c},\vec{K},\vec{q},\vec{\ell},\vec{u})$. Let $u_{\alpha}:\cX\to\Z$ be the number of iterations of Algorithm~\ref{alg:pdlp} using Pock-Chambolle preconditioning parameter $\alpha$ (with all other hyperparameters---in particular the primal-weight update smoothing parameter $\theta$---fixed to their default settings). Then, for each LP instance $x\in\cX$ for which every row norm and column norm used in the final Pock–Chambolle scaling step (Algorithm \ref{alg:preconditioning}) is strictly positive, the dual function $u^*_x(\alpha)\coloneq u_{\alpha}(x)$ is $(k_F,k_G,q,M,\Delta,1)$-Pfaffian piecewise structured, with
\[\begin{aligned}
    k_F\le 2^{K_\alpha},~k_G\le K_\alpha,~q = O(n+m+(ST)^{2.3}\log\norm{\vec{K}}_2),~M,\Delta \le 2^{O((ST)^{2.3}\log\norm{\vec{K}}_2)}
\end{aligned}\] where $m = m_1 + m_2$ and $K_\alpha = (ST)^{2.3}\log\norm{\vec{K}}_2(L+U+m_1+6)+6S$.
Consequently, 
$$\operatorname{Pdim}(\{u_\alpha:\mathcal X\to\mathbb{Z}\mid \alpha\in A\})
=
O((n+m+(ST)^{2.3}\widetilde{K})^2
+
(ST)^{2.3}\widetilde{K}(L+U+m_1)),$$ where $\widetilde{K}=\sup_{\vec{K}\in\cX}\log\norm{\vec{K}}_2$ and $S, T$ are the outer and inner loop iteration limits of Algorithm~\ref{alg:pdlp}.
\end{theorem}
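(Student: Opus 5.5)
The plan is to mirror the argument for Theorem~\ref{thm:pdim-theta}. The only genuinely new ingredient is that $\alpha$ enters once, at the very start, through the Pock--Chambolle scaling matrices $\vec{D}_1,\vec{D}_2$ (Alg.~\ref{alg:preconditioning}, line~\ref{line:pockchambolle}), and then propagates through the whole run.

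The first step is to show that the preconditioned data are Pfaffian in $\alpha$ with a short chain. The ten Ruiz iterations do not depend on $\alpha$, so the entries of $\vec{K}$ entering the final scaling step are constants $K_{j,i}$. For a row $j$ we have
\[
\norm{\vec{K}_{j,\cdot}}_{2-\alpha}^{-1/2} = \exp\!\Big(-\tfrac{1}{2(2-\alpha)}\log\textstyle\sum_i |K_{j,i}|^{2-\alpha}\Big),
\]
with $|K_{j,i}|^{2-\alpha}=\exp((2-\alpha)\log|K_{j,i}|)$. Columns are handled the same way with $\alpha$ in place of $2-\alpha$. Entries with $K_{j,i}=0$ are simply dropped. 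The positivity hypothesis on the row and column norms guarantees that the logarithms and the divisions are taken away from zero.

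The exponentials $e^{-\alpha\log|K_{j,i}|}$ factor as products of a bounded number of chain elements. So I would build a chain containing $e^{\pm\alpha}$-type terms, one $\log$ of the power sum per row and per column, and one outer $\exp$ per row and per column. I expect this to give a chain of length $O(n+m)$ with degree $O(1)$, after charging the various distinct bases to shared exponentials or, if necessary, absorbing them into the $n+m$ count. Through $\vec{D}_1,\vec{D}_2$, the quantities $\vec{K}$, $\vec{c}$, $\vec{q}$, $\vec{\ell}$, $\vec{u}$, $\vec{r}_1$, $\vec{r}_2$ all become Pfaffian over this common chain with $O(1)$ degree. The same holds for $\omega^0=\norm{\vec{c}}_2/\norm{\vec{q}}_2$ and for $\widehat\eta^{0,0}=1/\norm{\vec{K}}_\infty$. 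These last two may need $\sqrt{\cdot}$ and $\max$. The $\max$ I would handle by adding $O(nm)$ boundary comparisons, or by splitting into branches where the maximizing row is fixed. The $\sqrt{\cdot}$ adds one chain element.

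Next, I would reuse verbatim the execution-path analysis from the proof of Theorem~\ref{thm:pdim-theta}. Every branch is written as a sign condition $\tau(\alpha)\ge 0$: projections, line-search acceptance tests in $\mathtt{adaptivePDHG}$, the KKT comparison, termination and restart tests, and the $10^{-12}$ threshold tests in the primal-weight update. This yields the same count $k_G\le K_\alpha$, because the branching structure is identical and only the functions inside it change. On each sign-invariant region the execution path is fixed. Induction over iterations then shows that iterates, step sizes, averages and $\omega^s$ are Pfaffian over the base chain from the first step, extended by the $\log/\exp$ and square-root elements that the $\theta$-analysis already introduced, namely $O((ST)^{2.3}\log\norm{\vec{K}}_2)$ of them. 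Here Lemma~\ref{lemma:adaptive-pdhg-iter-bound} bounds the line-search length. That lemma depends on $\norm{\vec{K}}_2$ of the preconditioned matrix, so I must check that preconditioning does not inflate this quantity beyond the stated $\widetilde K$ (or reinterpret $\widetilde K$ accordingly). Adding the two chain lengths gives $q=O(n+m+(ST)^{2.3}\log\norm{\vec{K}}_2)$. The degrees $M$ and $\Delta$ grow by at most a constant factor per operation, giving $2^{O((ST)^{2.3}\log\norm{\vec{K}}_2)}$. Finally, $k_F\le 2^{k_G}$ by counting sign patterns.

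Plugging into Theorem~\ref{thm:pfaffian-pdim} with $d=1$ gives $q^2+2q\log(\Delta+M)+2\log(\Delta(k_F+k_G))$. The first term is $O((n+m+(ST)^{2.3}\widetilde K)^2)$. The cross term $q\cdot(ST)^{2.3}\widetilde K$ is dominated by it. The last term is $O(k_G)=O((ST)^{2.3}\widetilde K(L+U+m_1))$. I expect the main obstacle to be the first step: keeping the chain length for the $\alpha$-dependent norms linear in $n+m$ rather than $nm$, given the distinct bases $|K_{j,i}|$. My first attempt would be to treat each row and column power sum $\sum_i |K_{j,i}|^{2-\alpha}$ as a single Pfaffian chain element whose derivative is expressed through a companion element $\sum_i |K_{j,i}|^{2-\alpha}\log|K_{j,i}|$. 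This closes only approximately, so if it fails I would accept a degree blow-up rather than a longer chain.
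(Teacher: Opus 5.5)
Your plan fails at the step you flagged as the main obstacle, and neither proposed repair works. With the scaling as printed in line~\ref{line:pockchambolle} of Algorithm~\ref{alg:preconditioning}, your chain element $\log\sum_i|K_{j,i}|^{2-\alpha}$ requires the power sum $R_j(\alpha)=\sum_i|K_{j,i}|^{2-\alpha}$ to be Pfaffian over your chain. But $R_j$ is a sum of exponentials $|K_{j,i}|^{2}e^{-\alpha\log|K_{j,i}|}$, with one distinct rate per distinct nonzero $|K_{j,i}|$ in the row.

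These exponentials do not ``factor as products of a bounded number of chain elements'' unless the $\log|K_{j,i}|$ are integer combinations of a few common values. The companion element $\sum_i|K_{j,i}|^{2-\alpha}\log|K_{j,i}|$ only starts the non-terminating tower $\sum_i|K_{j,i}|^{2-\alpha}(\log|K_{j,i}|)^r$, $r=0,1,2,\ldots$. Nor can a degree blow-up replace chain length, for the following reason.

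Suppose $f_1,\ldots,f_q$ is a Pfaffian chain and $R_j$ is Pfaffian over it. Then every derivative of $R_j$ lies in $\R[\alpha,f_1,\ldots,f_q]$, since this algebra is closed under $d/d\alpha$. Its fraction field has transcendence degree at most $q$ over $\R(\alpha)$. Let $k_j$ be the number of distinct nonzero $|K_{j,i}|$ in row $j$. Inverting the Vandermonde system formed by $R_j,R_j',\ldots,R_j^{(k_j-1)}$ puts each individual $|K_{j,i}|^{-\alpha}$ in that algebra. When the $\log|K_{j,i}|$ are linearly independent over $\mathbb{Q}$ (the generic situation), these exponentials are algebraically independent over $\R(\alpha)$, because their monomials are exponentials with pairwise distinct rates.

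Hence, for generic entries, any chain serving all rows has length at least $\mathrm{nnz}(\vec K)$, which can be $\Theta(nm)$, regardless of $M$ and $\Delta$. Your route therefore gives at best $q=O(\mathrm{nnz}(\vec K)+(ST)^{2.3}\log\norm{\vec K}_2)$. The resulting pseudo-dimension bound has $(\mathrm{nnz}(\vec K)+(ST)^{2.3}\widetilde K)^2$ in place of $(n+m+(ST)^{2.3}\widetilde K)^2$.

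The paper never meets this obstacle. Its proof writes the final scaling as $\vec D_1(\alpha)=\operatorname{diag}(\mathsf r_1^{2-\alpha},\ldots,\mathsf r_m^{2-\alpha})$ and $\vec D_2(\alpha)=\operatorname{diag}(\mathsf c_1^{\alpha},\ldots,\mathsf c_n^{\alpha})$, where $\mathsf r_j=\norm{\vec K_{j,\cdot}}_2$ and $\mathsf c_i=\norm{\vec K_{\cdot,i}}_2$ are \emph{fixed} norms of the post-Ruiz matrix. Each diagonal entry is then a single exponential with an $\alpha$-independent base, namely $\mathsf r_j^2e^{-\alpha\log\mathsf r_j}$ or $e^{\pm\alpha\log\mathsf c_i}$. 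So the chain $(\alpha,\mathsf r_j^{2-\alpha},\mathsf c_i^{\alpha},\mathsf c_i^{-\alpha})$ of length $1+m+2n$ makes all preconditioned data and $\vec r_1,\vec r_2$ Pfaffian. After that, the $\theta$-argument is reused verbatim, and this coincides with the rest of your plan: the same predicate count, induction along a fixed execution path, and substitution into Theorem~\ref{thm:pfaffian-pdim}.

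This fixed-base form is not literally the $\norm{\cdot}_{2-\alpha}^{-1/2}$, $\norm{\cdot}_{\alpha}^{-1/2}$ formula of line~\ref{line:pockchambolle}, which is what you faithfully modeled. To reach the stated $O(n+m)$ term you must adopt the paper's form of the scaling. With the literal formula, your argument supports only the weaker $\mathrm{nnz}(\vec K)$-dependent bound.

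Your side concern about the preconditioned $\norm{\vec K}_2$ in Lemma~\ref{lemma:adaptive-pdhg-iter-bound} is also well founded. The paper's one-parameter proof substitutes $B$ without addressing it, and only the joint theorem makes it an explicit assumption ($\kappa_A<\infty$).
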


\begin{proof}[Proof sketch]
The ten Ruiz iterations are independent of $\alpha$---let $\vec{K}$ be the post-Ruiz matrix. Pock-Chambolle scaling, with $\mathsf{r}_j \coloneq \norm{{\vec{K}}_{j,\cdot}}_2 > 0$ and $\mathsf{c}_i \coloneq\norm{{\vec{K}}_{\cdot,i}}_2 > 0$, is given by preconditioners $\vec{D}_1(\alpha)=\operatorname{diag}(\mathsf{r}_1^{2-\alpha},\dots,\mathsf{r}_m^{2-\alpha})$,
$\vec{D}_2(\alpha)=\operatorname{diag}(\mathsf{c}_1^{\alpha},\dots,\mathsf{c}_n^{\alpha})$.
These computations represent one-variable Pfaffian functions of $\alpha$, since
\[
\mathsf{r}_j^{2-\alpha}=\mathsf{r}_j^2 e^{-\alpha\log \mathsf{r}_j},
\qquad
\mathsf{c}_i^\alpha=e^{\alpha\log \mathsf{c}_i},
\qquad
\mathsf{c}_i^{-\alpha}=e^{-\alpha\log \mathsf{c}_i}.
\]
So, all entries of the preconditioned data ${\vec{K}}(\alpha)\leftarrow\vec{D}_1(\alpha)\vec{K}\vec{D}_2(\alpha)$, $\vec{c}(\alpha)\leftarrow\vec{D}_2(\alpha)\vec{c}$, $\vec{q}(\alpha)^\top\leftarrow\vec{q}^\top \vec{D}_1(\alpha)$, $\vec{u}(\alpha)\leftarrow\vec{D}_2(\alpha)^{-1}\vec{u}$, $\vec{\ell}(\alpha)\leftarrow\vec{D}_2(\alpha)^{-1}\vec{\ell}$, as well as the accumulated scaling vectors $\vec{r}_1(\alpha),\vec{r}_2(\alpha)$ used to unscale the iterates for termination checks, are Pfaffian in $\alpha$.
It follows that the entire preconditioned input to Algorithm~\ref{alg:pdlp} is Pfaffian in $\alpha$ from the
Pfaffian chain
$$
\mathcal{C}
=\left(
\alpha,
\mathsf{r}_1^{2-\alpha},\ldots,\mathsf{r}_m^{2-\alpha},
\mathsf{c}_1^{\alpha},\ldots, \mathsf{c}_n^{\alpha},
\mathsf{c}_1^{-\alpha},\ldots, \mathsf{c}_n^{-\alpha}\right),
$$ of length $ 1 + m + 2n = O(n+m)$.
The remainder of the analysis proceeds as in the $\theta$ case, preserving Pfaffian structure.
Applying Theorem~\ref{thm:pfaffian-pdim} yields the result.
\end{proof}

Finally, we present our pseudo-dimension bound for 
tuning $\theta$ and $\alpha$ simultaneously (Appendix \ref{app:extensions} contains the full formal theorem statement (Theorem \ref{thm:pdim-theta-alpha-full}) and its proof).

\begin{theorem}[Pseudo-dimension bound for jointly tuning $(\theta,\alpha)$ in PDLP]
\label{thm:pdim-theta-alpha-main}
Let $\cU_{\Theta,A} = \{u_{\theta,\alpha} : \cX\to\mathbb{Z}\mid (\theta,\alpha)\in\Theta\times A\}$ denote the class of functions measuring iteration count of Algorithm~\ref{alg:pdlp} with primal-weight update smoothing parameter $\theta$ and Pock-Chambolle preconditioning parameter $\alpha$. Then, under mild assumptions, $$\operatorname{Pdim}(\cU_{\Theta,A}) = O\big((n+m+(ST)^{2.3}\widetilde{K})^2
+
(ST)^{2.3}\widetilde{K}(L+U+m_1)\big),$$ where 
$\widetilde{K}=\sup_{\vec{K}\in\cX}\log\norm{\vec{K}}_2$ and $S, T$ are the outer and inner loop iteration limits of Algorithm~\ref{alg:pdlp}. 
\end{theorem}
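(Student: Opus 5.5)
The plan is to show that, for each fixed LP instance $x$, the two-parameter dual function $u^*_x(\theta,\alpha)\coloneq u_{\theta,\alpha}(x)$ is $(k_F,k_G,q,M,\Delta,2)$-Pfaffian piecewise structured. The parameters will be essentially those of Theorem~\ref{thm:pdim-alpha-formal}: the same $k_G\le K_\alpha$, $k_F\le 2^{K_\alpha}$, $q=O(n+m+(ST)^{2.3}\log\norm{\vec{K}}_2)$, and $M,\Delta\le 2^{O((ST)^{2.3}\log\norm{\vec{K}}_2)}$. We then apply Theorem~\ref{thm:pfaffian-pdim} with $d=2$. Since $d$ enters only as the constant factors $d^2$ and $2d$, the bound keeps the same form as in the $\alpha$-only case:
\[
O\big((n+m+(ST)^{2.3}\widetilde{K})^2+(ST)^{2.3}\widetilde{K}(L+U+m_1)\big).
\]
The "mild assumptions" are exactly those of the two individual theorems: every Pock--Chambolle row and column norm is strictly positive, and $\norm{\vec{K}}_\infty>1$.

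\textbf{Step 1 (preconditioning).} The preconditioning depends only on $\alpha$. As in the proof of Theorem~\ref{thm:pdim-alpha-formal}, we start the Pfaffian chain with
\[
\big(\alpha,\ \mathsf{r}_j^{2-\alpha},\ \mathsf{c}_i^{\pm\alpha}\big),
\]
of length $O(n+m)$, and add $\theta$ as a second coordinate that appears polynomially. These chain functions still satisfy the Pfaffian differential condition in the two-variable sense: their partial derivatives in $\theta$ are zero, and their partials in $\alpha$ are unchanged. So all preconditioned data $\vec{K}(\alpha),\vec{c}(\alpha),\vec{q}(\alpha),\vec{\ell}(\alpha),\vec{u}(\alpha),\vec{r}_1(\alpha),\vec{r}_2(\alpha)$ are Pfaffian in $(\theta,\alpha)$ with bounded degree. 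The initial quantities $\widehat\eta^{0,0}=1/\norm{\vec{K}(\alpha)}_\infty$ and $\omega^0=\norm{\vec{c}(\alpha)}_2/\norm{\vec{q}(\alpha)}_2$ need extra care. We introduce boundary functions that fix which row attains the max, and we handle the square roots by adding $\sqrt{\cdot}$ (equivalently $\exp(\tfrac12\log\cdot)$) to the chain. Each of these costs $O(1)$ chain length.

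\textbf{Step 2 (execution trace).} We redo the induction from the proof of Theorem~\ref{thm:pdim-theta} over the $S\cdot T$ iterations and the line-search steps. The number of line-search steps is bounded by Lemma~\ref{lemma:adaptive-pdhg-iter-bound}, and it must be checked that this bound is uniform in $\alpha$. That should follow because the bound depends only on $\log\norm{\vec{K}}_2$ through $\widetilde{K}$, and Pock--Chambolle scaling keeps the scaled norm controlled. Every branch is written as a sign condition $\tau(\theta,\alpha)\ge0$ of a Pfaffian function. These branches are the projections, line-search acceptance tests, the KKT comparison, the termination and restart tests, and the $10^{-12}$ guards on the primal weight update. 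Within a fixed sign region, the PDHG updates, averaging, and norms are rational operations plus square roots of previously built chain functions. The primal weight update is
\[
\omega^{s+1}=\exp\!\big(\theta\log\rho^{s}+(1-\theta)\log\omega^s\big),
\]
which adds one $\log$ and one $\exp$ to the chain per restart; its $\theta$-partial stays polynomial in earlier chain elements.

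\textbf{Step 3 (counting).} The counting matches the $\alpha$-only case up to additive $O(S)$ terms from the $\omega$ chain elements. This gives $k_G\le K_\alpha$ and the stated $q,M,\Delta$; the chain-length contributions from the $\alpha$-preconditioning and the $\theta$-updates add rather than multiply.

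The main obstacle is Step 2, namely keeping the chain length additive under joint dependence. In the $\theta$-only analysis the chain functions are built as compositions along the trajectory. Now every iterate also depends on $\alpha$ through the preconditioned data, so we must check that each $\exp$ and $\log$ of a two-variable Pfaffian function $g$ has partial derivatives $\partial_\theta$ and $\partial_\alpha$ that are polynomial in earlier chain elements, with degree growth of the same order as before. I would verify this through the closure lemma of \citet{balcan2025algorithm} for composition with $\exp$ and $\log$, applied coordinatewise. The remaining concern is that the degree $M$ could pick up an extra factor from differentiating the preconditioned data. Since $M,\Delta$ enter Theorem~\ref{thm:pfaffian-pdim} only logarithmically, any polynomial blow-up there is absorbed into the $O(\cdot)$.
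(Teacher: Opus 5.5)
Your plan is the paper's: show $u_x^*(\theta,\alpha)$ is $(k_F,k_G,q,M,\Delta,2)$-Pfaffian piecewise structured. You use the $O(n+m)$ preconditioning chain plus $O(STB)$ execution-trace chain elements, reuse the predicate count $ST\bigl(B(L+U+m_1+3)+7\bigr)+2S$, and apply Theorem~\ref{thm:pfaffian-pdim} with $d=2$. Your extra treatment of $\widehat\eta^{0,0}$ and $\omega^0$ is care the paper skips, and is fine.

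\textbf{The gap: uniformity of the line-search bound.} The step that does not go through as you argue it is the $\alpha$-uniform bound $B$ on line-search rounds. You need it for $k_G$ and $q$ to be finite and independent of the instance. Lemma~\ref{lemma:adaptive-pdhg-iter-bound} bounds the rounds by $\log(\widehat\eta\,\|\widetilde K(\alpha)\|_2)/\log\bigl(1/(1-(k+1)^{-0.3})\bigr)$. Here $\widetilde K(\alpha)$ is the \emph{preconditioned} matrix that PDHG actually runs on. The lemma's passage to $\log\|\vec K\|_2$ uses $\widehat\eta<2^{ST}$, i.e.\ $\|\widetilde K(\alpha)\|_\infty\ge 1$. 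Your hypothesis $\|\vec K\|_\infty>1$ on the raw instance need not survive the $\alpha$-dependent rescaling. An upper bound on $\|\widetilde K(\alpha)\|_2$ (``Pock--Chambolle keeps the scaled norm controlled'') does not rescue it either. The reason is that $\widehat\eta^{0,0}=1/\|\widetilde K(\alpha)\|_\infty$ is large precisely when the scaled entries are small.

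\textbf{How the paper handles it, and how to close it.} The paper does not prove this uniformity; it assumes it. Its ``mild assumptions'' (Theorem~\ref{thm:pdim-theta-alpha-full}) are:
\begin{enumerate}
\item positivity of the Pock--Chambolle norms;
\item a uniform bound $\widehat\eta_{\max}$ on the step sizes passed to Algorithm~\ref{alg:adaptivepdhg};
\item $\kappa_A=\sup_{\alpha\in A}\|\widetilde K(\alpha)\|_2<\infty$.
\end{enumerate}
It then states the bound in terms of the resulting $B_{\max}$. You can either adopt those assumptions or close the gap directly by scale invariance. Since $\widehat\eta\le 2^{ST}\widehat\eta^{0,0}$ and $\|\widetilde K\|_2\le\sqrt{mn}\,\|\widetilde K\|_\infty$ (whether $\|\cdot\|_\infty$ is read entrywise or as the induced norm), you get $\widehat\eta\,\|\widetilde K(\alpha)\|_2\le 2^{ST}\sqrt{mn}$ for every $(\theta,\alpha)$. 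Combining this with $\log\bigl(1/(1-x)\bigr)\ge x$ and $k+1\le ST$ gives $B=O\bigl((ST)^{0.3}(ST+\log(mn))\bigr)$. This holds uniformly in the parameters and the instance, with no norm assumption needed. The rest of your argument then goes through with this $B$ playing the role of the paper's $B_{\max}$.
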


The essential takeaway from Theorems~\ref{thm:pdim-theta},~\ref{thm:pdim-alpha-formal}, and~\ref{thm:pdim-theta-alpha-main} is that the primal-weight update smoothing parameter and the Pock-Chambolle preconditioning parameter are learnable with sample complexity that is polynomial in all problem parameters. That is, a polynomial-size training set of LP instances suffices to ensure that a $(\theta,\alpha)$ setting that yields good performance over the training set will also lead to good expected performance over future test instances. Critically, these guarantees apply independent of the actual procedure used to select hyperparameters $\theta,\alpha$ based on the training set.

\section{Experiments}\label{sec:experiments}

We demonstrate via experiments that tuning the primal weight update smoothing parameter $\theta$ and the Pock-Chambolle parameter $\alpha$ in a data-dependent manner can lead to improved convergence rates (in terms of both iteration count and run-time). Furthermore, parameter tuning ought to be done in a {\em data-dependent} manner---the optimal set of parameters for one instance distribution can be completely different (or even opposite) from the optimal set of parameters for a different instance distribution.\looseness-1

We summarize here the LP instance distributions we use in experiments.
(1) QAP15 is an LP in the well-known Mittelmann benchark suite~\citep{mittelmann2025latest} with 6331 rows, 22275 columns, and 110700 nonzeros. It is a linearization of a quadratic assignment problem~\citep{adams1994improved}. We draw instances by randomly perturbing the objective vector of QAP15.
(2) Transportation instances: the objective is to find minimum cost shipments between a set of suppliers and a set of customers subject to demand not exceeding supply~\citep{maaz2025pdlp}. We consider instances with 30 suppliers and 45 customers (1350 variables, 75 constraints), and 60 suppliers and 90 customers (5400 variables, 150 constraints). Suppliers' supplies and customers' demands are drawn randomly as in~\citet{maaz2025pdlp}. 
(3) Winner determination in combinatorial auctions: these are LP relaxations of weighted-set packing integer programs that compute a revenue-maximizing set of non-overlapping package bids in a multi-item auction. We consider three benchmark distributions: decay-decay~\citep{sandholm2002winner} with 100 items and 1000 bids (1000 variables, 100 constraints), and 200 items and 2000 bids (2000 variables, 200 constraints); and multipaths~\citep{leyton2000towards} with 90 items and 2000 bids (2000 variables, 90 constraints).

\begin{figure}[!tb]
    \centering
     \includegraphics[trim={0cm 0cm 0cm 0cm},clip, width=0.495\linewidth]{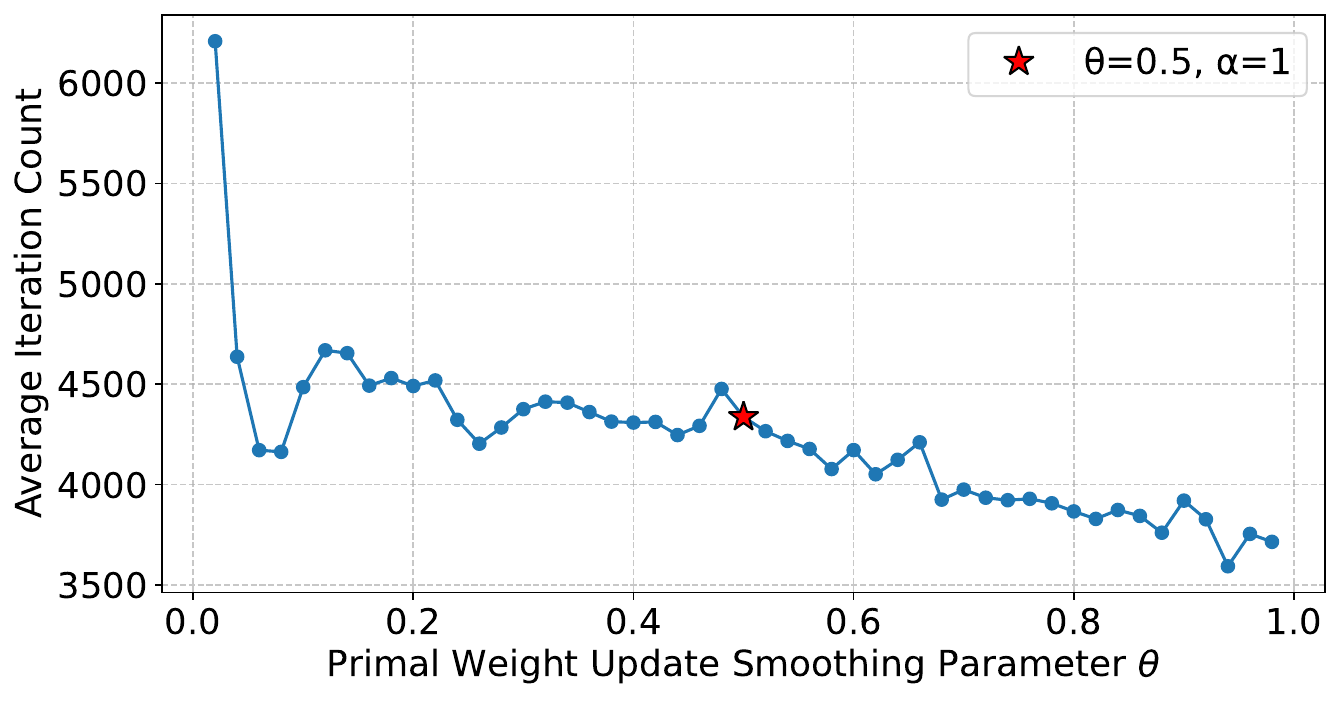}
    \includegraphics[trim={0cm 0cm 0cm 0cm},clip, width=0.495\linewidth]{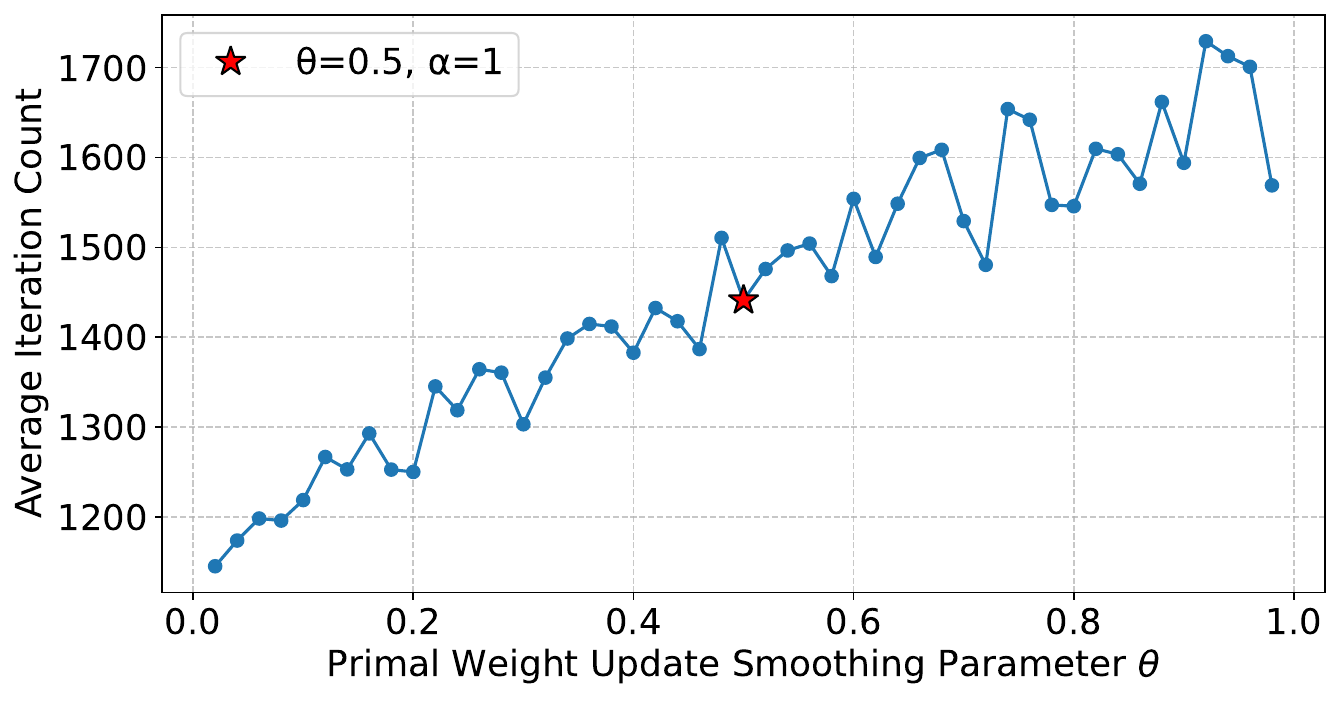}
    \includegraphics[trim={0cm 0cm 0cm 0cm},clip, width=0.495\linewidth]{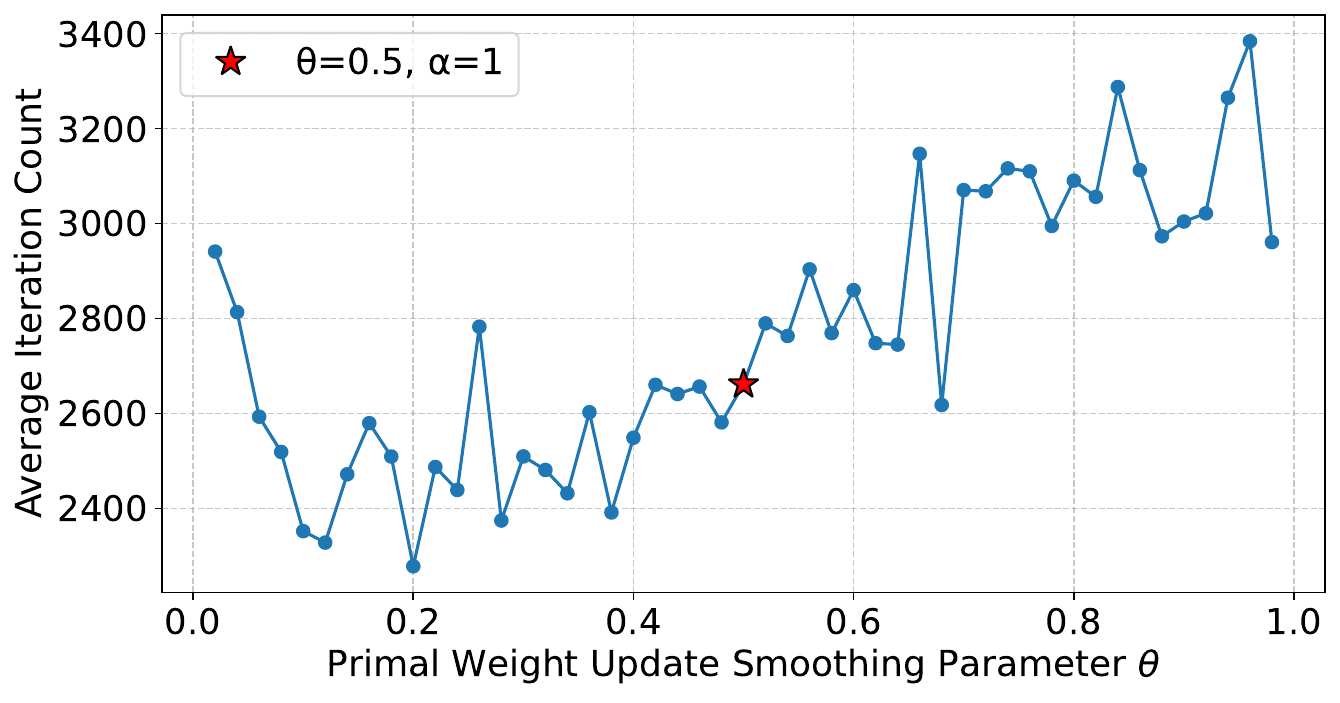}
    \includegraphics[trim={0cm 0cm 0cm 0cm},clip, width=0.495\linewidth]{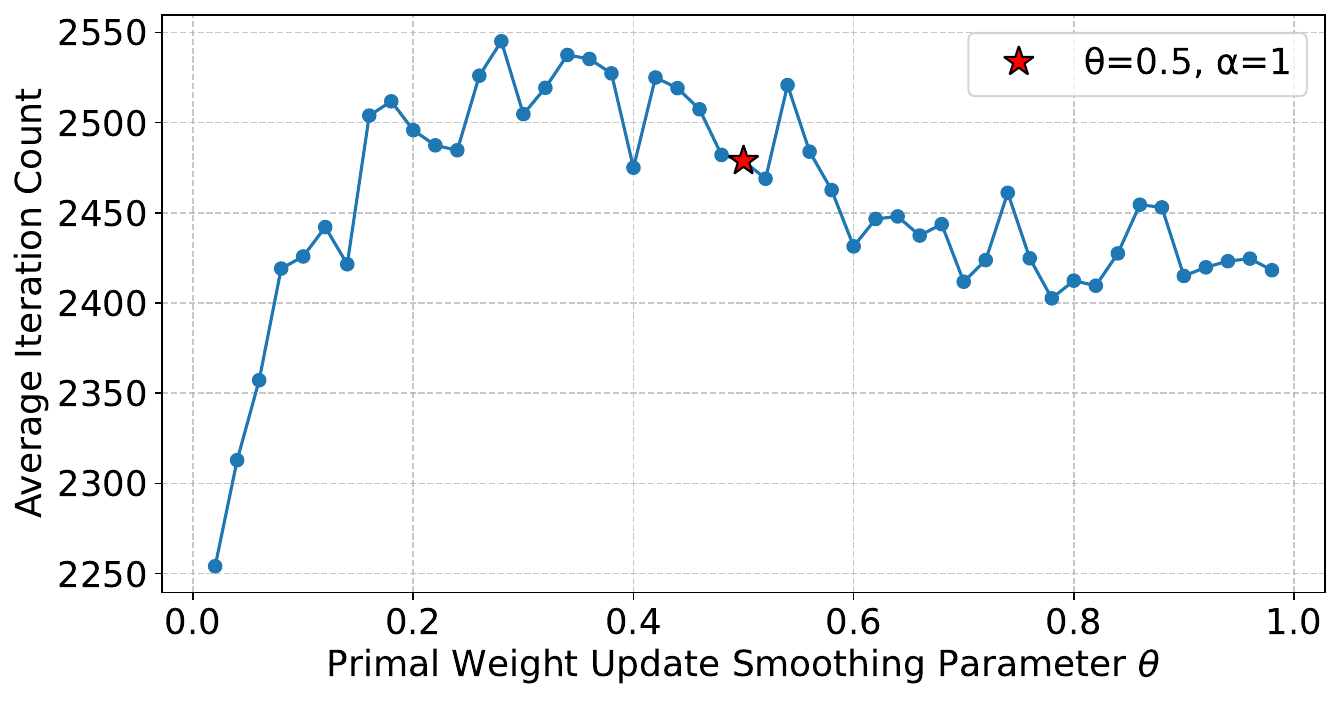}
    \caption{Tuning $\theta$ with $\alpha = 1$; iteration count (averaged over 100 instances). {\bf (Top left)} Transportation, 60 suppliers, 90 customers; {\bf (Top right)} Decay-decay auction, 200 items, 2000 bids; {\bf (Bottom left)} Multipaths auction, 90 items, 2000 bids; {\bf (Bottom right)} Perturbed QAP15.}
    \label{fig:theta_iteration_plots}
\end{figure}

\begin{figure}[!htb]
    \centering
     \includegraphics[trim={0cm 0cm 0cm 0cm},clip, width=0.495\linewidth]{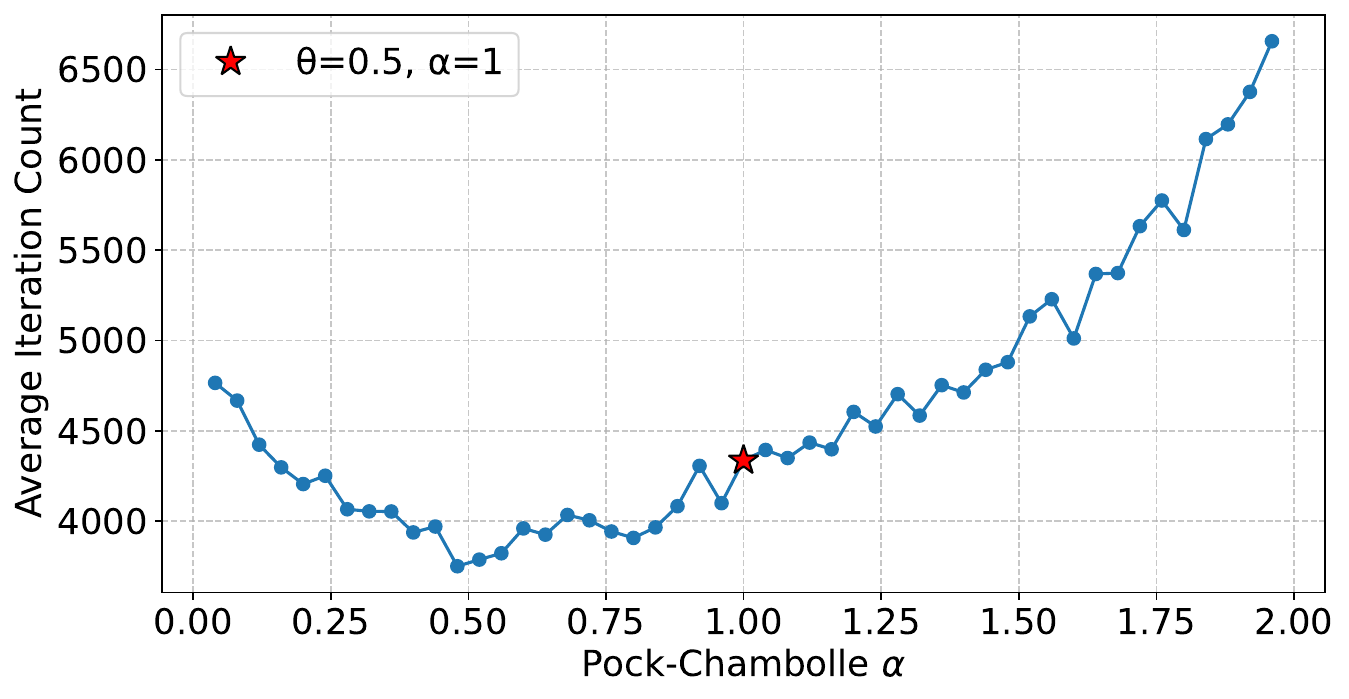}
    \includegraphics[trim={0cm 0cm 0cm 0cm},clip, width=0.495\linewidth]{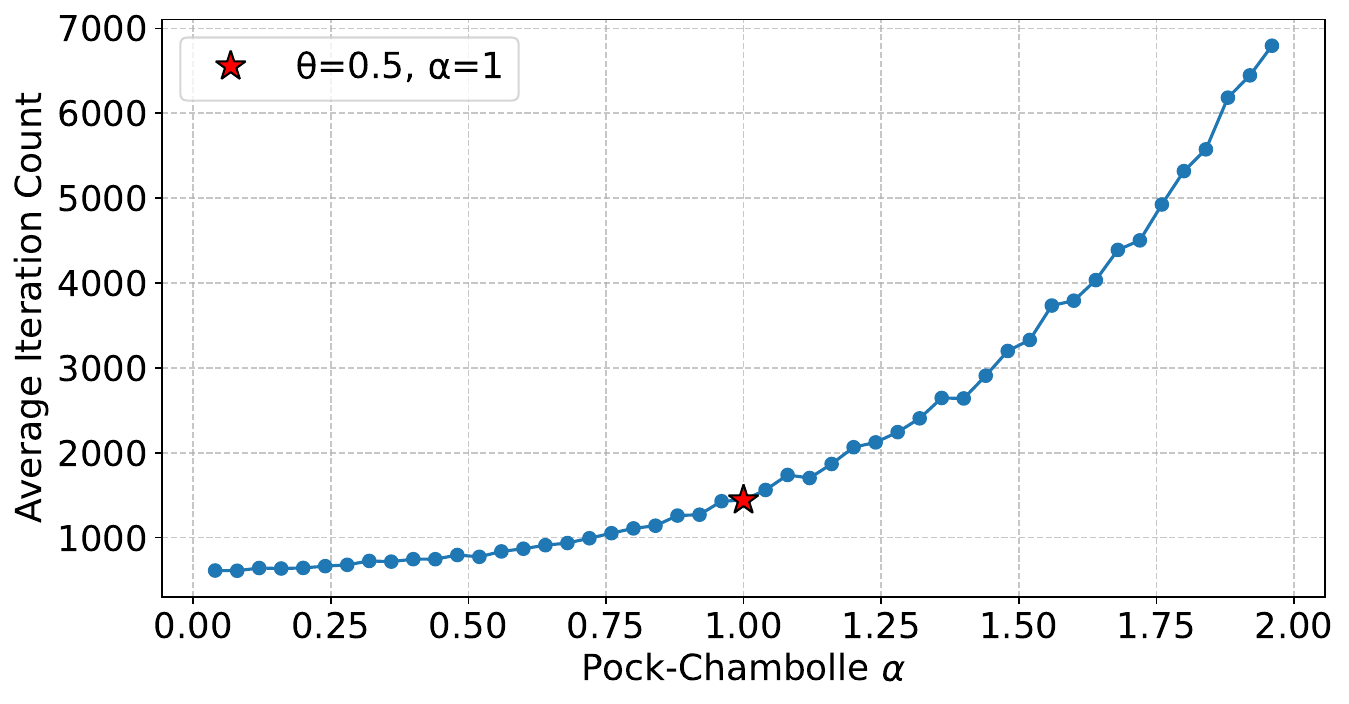}
    \caption{Tuning $\alpha$ with $\theta = 1$; iteration count (averaged over 100 instances). {\bf (Left)} Transportation, 60 suppliers, 90 customers; {\bf (Right)} Decay-decay auction, 200 items, 2000 bids.}
    \label{fig:alpha_iteration_plots}
    \vspace{-1.1em}
\end{figure}

In experiments we used an open source PyTorch implementation of PDLP~\citep{maaz2025pdlp} that exactly mirrors the specifications in Algorithms~\ref{alg:pdlp},~\ref{alg:preconditioning}, and~\ref{alg:adaptivepdhg} (other implementations~\citep{applegate2021practical,lu2025cupdlp} employ additional standard techniques such as presolve that we do not analyze in this paper). All experiments were run in Google Colab on a NVIDIA G4 (RTX PRO 6000 Blackwell Server Edition) GPU. We set $\varepsilon_{\tol} = 10^{-4}$ (the error tolerance in the termination criteria; see Appendix~\ref{app:pdlp}). PDLP default settings are $\theta = 0.5$ and $\alpha = 1$, marked by a red star in plots.

\begin{figure}
    \centering
    \includegraphics[trim={0 0 0 0},clip,width=0.495\linewidth]{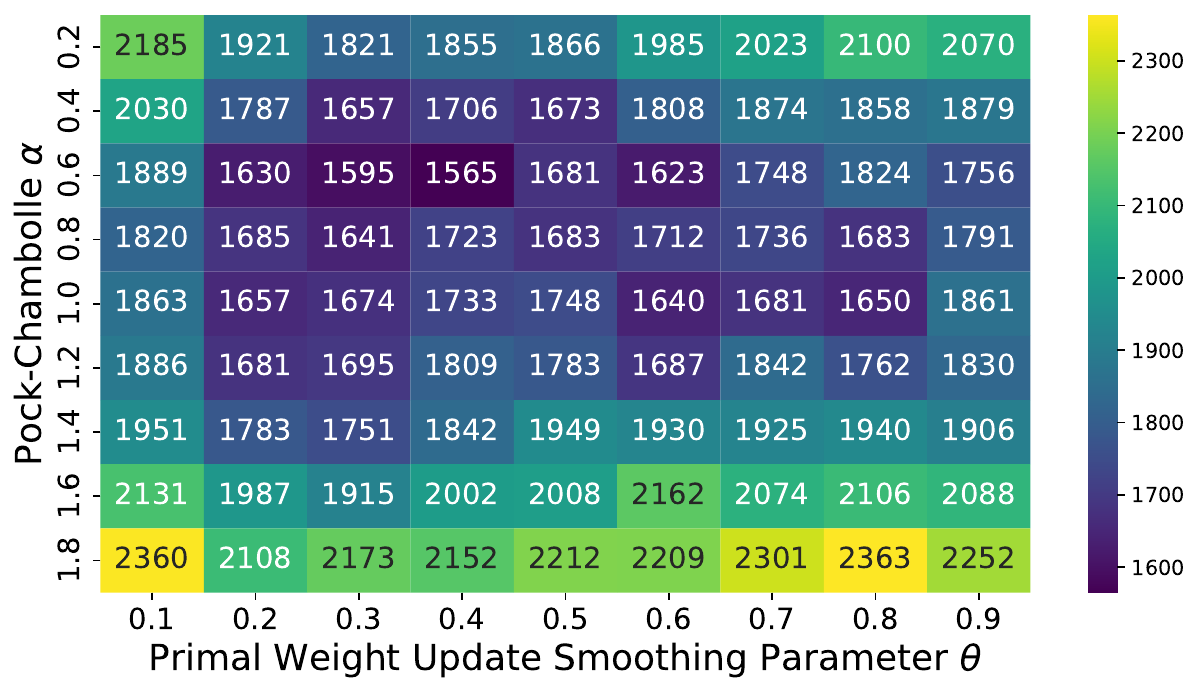}
    \includegraphics[trim={0 0 0 0},clip,width=0.495\linewidth]{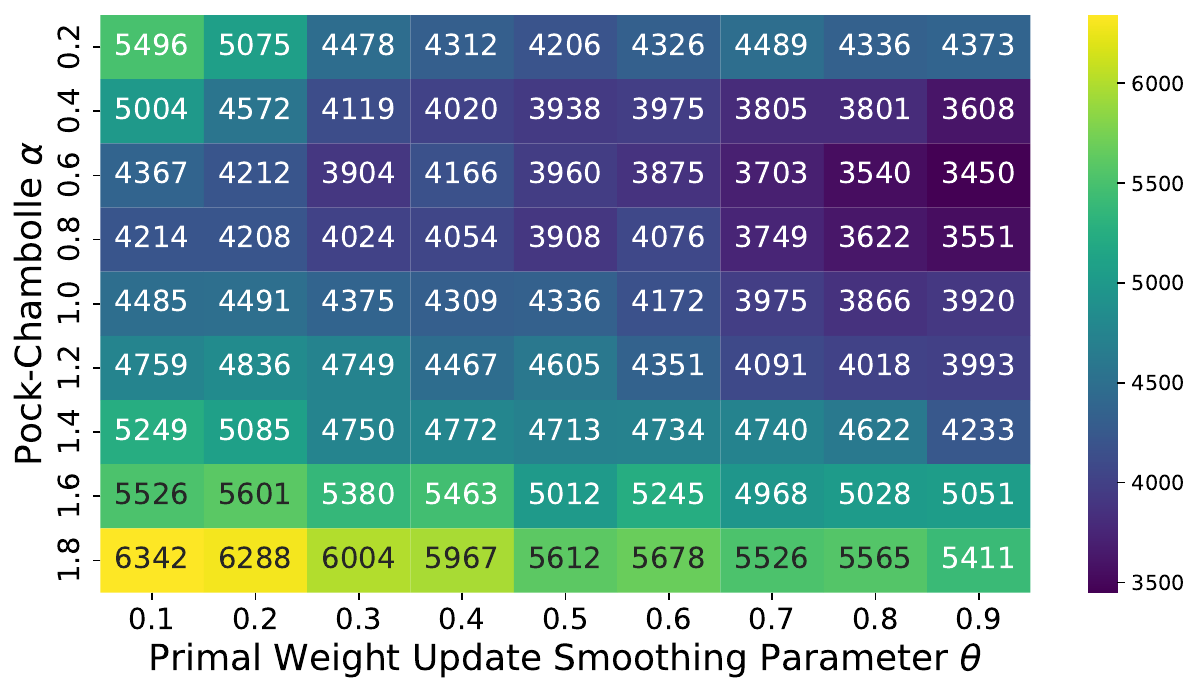}
    \includegraphics[trim={0 0 0 0},clip,width=0.495\linewidth]{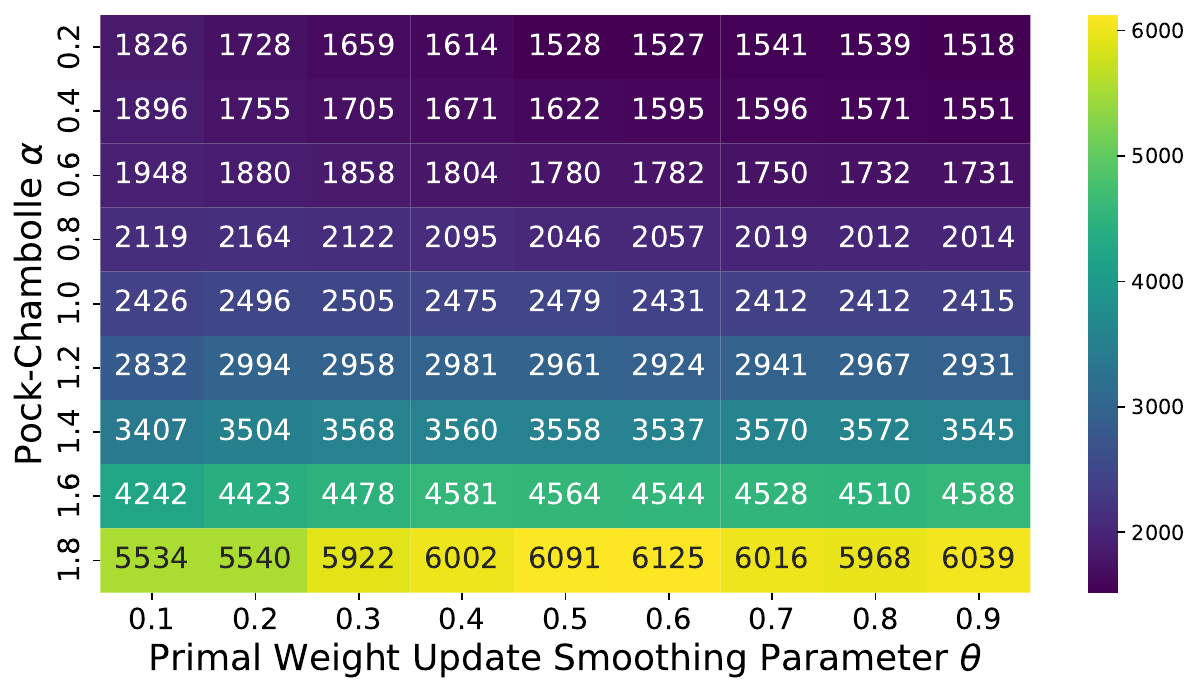}
    \includegraphics[trim={0 0 0 0},clip,width=0.495\linewidth]{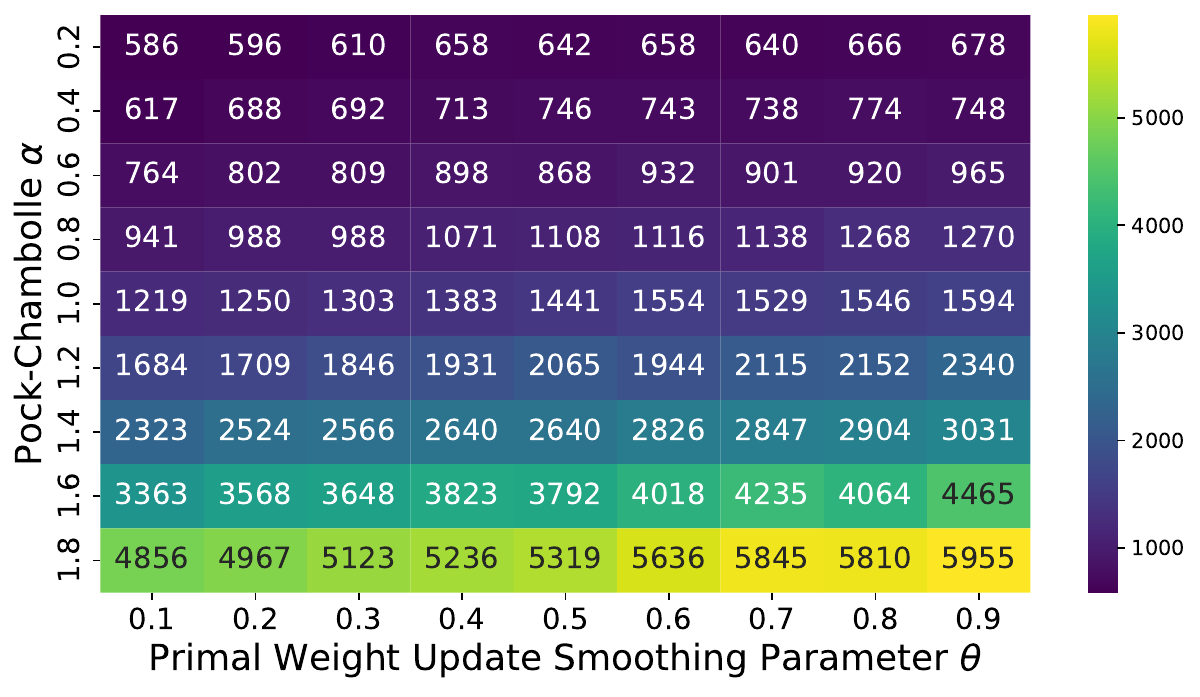}
    \caption{Grid search over $(\theta,\alpha)$; iteration count (averaged over 100 instances). {\bf (Top left)} Transportation, 30 suppliers, 45 customers; {\bf (Top right)} Transportation, 60 suppliers, 90 customers; {\bf (Bottom left)} Perturbed QAP15; {\bf (Bottom right)} Decay-decay auction, 200 items, 2000 bids.}
    \label{fig:main_heatmaps}
\end{figure}

Figure~\ref{fig:theta_iteration_plots} displays the effect of tuning the primal weight update smoothing parameter $\theta$, fixing the Pock-Chambolle preconditioning parameter to its default value of $\alpha = 1$, for four of the instance distributions (complete plots are in Appendix~\ref{app:exps}). The ERM parameter for the decay-decay auction and the perturbed QAP15 instances is $\theta = 0.02$; both distributions following the general trend that smaller $\theta$ improves performance (though performance on QAP15 is worst around $\theta = 0.3$ and improves beyond that). Small values of $\theta$ lead to the worst performance for the transportation instances with performance improving as $\theta$ approaches $1$. The ERM $\theta$ for the multipaths auction is $0.2$, with no obvious trends suggesting how $\theta$ should be tuned when $\alpha = 1$ is fixed.

Figure~\ref{fig:alpha_iteration_plots} displays the effect of tuning the Pock-Chambolle preconditioning parameter $\alpha$, fixing the primal weight update smoothing parameter to its default value of $\theta = 0.5$, for two of the instance distributions (complete plots are in Appendix~\ref{app:exps}). Corresponding plots for the remaining instance distributions, included in Appendix~\ref{app:exps}, display the same trend seen for the decay-decay auction distribution. In general, smaller values of $\alpha$ appear preferable, and this trend is especially pronounced for the auction instances. The ERM $\alpha$ for the transportation instances is around $0.5$. In both cases, poor $\alpha$ choices lead to large blowups in iteration count (over $10\times$ for the auction instances).

Figure~\ref{fig:main_heatmaps} displays the effect of simultaneously tuning $\theta$ and $\alpha$, for four of the instance distributions (complete grid search plots are in Appendix~\ref{app:exps}). The regions of parameter space leading to best performance differ substantially across the four distributions. Poor parameter choices correspond to large (an order of magnitude in some cases) degradations in performance. For the displayed instance distributions, ERM parameters improve over the average iteration count of default parameters $(\theta,\alpha) = (0.5, 1)$ by a factor of (in clockwise order, starting from the top left; Figure~\ref{fig:main_heatmaps}) $1.12\times$, $1.25\times$, $1.63\times$, and $2.45\times$. (The most dramatic improvements by ERM over default parameters are with the auction instances.) A finer grid search is likely to yield even more dramatic improvements.

The iteration count trends and resulting ERM parameters are mirrored by the run-time trends. Full run-time plots are in Appendix~\ref{app:exps}.

\section{Conclusions and Future Research}

We established generalization guarantees for tuning the most important hyperparameters within (cu)PDLP, a new state-of-the-art first order method for linear programming that runs on GPUs. First, we showed that the step size and primal weight of the vanilla PDHG algorithm---which underpins PDLP---are learnable with linear sample complexity. Our analysis showed that the primal and dual trajectories were piecewise rational functions with bounded degree. Next, we characterized the behavior of PDLP trajectories and leveraged the recent theory of data-driven algorithm design for algorithms with {\em Pfaffian} structure to obtain polynomial sample complexity guarantees for PDLP hyperparameter tuning. Finally, we presented experiments demonstrating the importance of hyperparameter tuning, and that distribution-dependent tuning is essential, with significant performance gains over default parameter settings.\looseness-1

There are many fruitful directions for future research. First, we believe our results can be directly extended to yield a more unified generalization theory for first-order PDHG-based LP methods including the gamut of algorithms proceeding PDLP~\citep{lu2024restarted,lu2024mpax,lu2025cupdlpx}. Results in that direction could shed light on automated design of higher-level algorithmic aspects beyond tuning of prespecified hyperparameters. Two important practical next steps are scaling up to larger-scale LP instances and {\em instance specific} parameter selection, where a (neural network based, say) model is trained to predict good parameter settings based on instance features. Guarantees along this line have been obtained in the related domain of integer programming~\citep{cheng2024sample}, and we expect that they are possible here as well. This direction has the potential for even greater practical performance improvements.

\bibliography{references}
\bibliographystyle{plainnat}
\newpage
\appendix
\section{Additional Related Work}\label{app:related_work}

\paragraph{First order methods for mathematical programming}  PDLP and its GPU-accelerated counterpart cuPDLP have been further improved with cuPDLPx~\citep{lu2025cupdlpx}, which uses a different averaging scheme based on Halpern PDHG~\citep{lu2024restarted} and a PID controller for primal-weight updates. MPAX~\citep{lu2024mpax} is a library that provides JAX implementations of (variants of) cuPDLP and cuPDLPx. Beyond linear programming, the promise of GPU-acclerated first order methods has been explored for large-scale quadratic programming~\citep{huang2025restarted,lu2026practical}.~\citet{lu2025overview} provide a comprehensive survey of GPU-based first-order methods for LP. We expect that the theory developed in the present paper can be adapted in a standard way to derive generalization guarantees for all aforementioned PDLP variants.

\paragraph{Data-driven algorithm design for numerical optimization}

There is a large body of work that studies automated configuration of branch-and-cut for integer programming, via the tools of data-driven algorithm design~\citep{balcan2021sample,balcan2022improved,balcan2022structural,cheng2024sample,cheng2025generalization}. Other applications include dimensionality-reduction methods for linear and quadratic programming~\citep{sakaue2024generalization,iwata2025learning,nguyen2026provably}, Lagrangian methods for integer programming~\citep{le2026provably}, and data-driven numerical linear algebra (in particular, algorithms for low-rank approximation)~\citep{bartlett2022generalization}.

The data-driven algorithm design toolkit that studies algorithm configuration through pseudodimension analysis has found wide-reaching applications. Use cases include auction design and pricing~\citep{ijcai2021p5,balcan2025increasing,balcan2025generalization}, computational biology ({\em e.g.,} sequence alignment algorithms)~\citep{balcan2024much}, and various machine learning algorithm configuration settings such as clustering~\citep{balcan2025algorithm}, regression algorithms~\citep{balcan2023new}, decision tree learning~\citep{balcan2024learning}, and neural network tuning~\citep{balcan2025sample}.

\section{Omitted PDLP Details}\label{app:pdlp}

We record here PDLP details omitted from the main body of the paper. This includes the adaptive PDHG step (Algorithm~\ref{alg:pdlp}, line~\ref{line:pdhg_step}) and the precise specifications of the restart and termination criteria, $\restart$ and $\terminate$.

\subsection{Termination}\label{app:termination}

Solution progress is measured via duality gap, primal feasibility, and dual feasibility {\em with respect to the original problem data}. Recall $\vec{r}_1$ and $\vec{r}_2$ collect the impacts of all scaling operations performed in preconditioning (Algorithm~\ref{alg:preconditioning}).
%
Consider the primal-dual iterates $\vec{x}^{s, t+1}, \vec{y}^{s, t+1}$ at the point Algorithm~\ref{alg:pdlp} enters the termination check at line~\ref{line:termination+restart}.
The {\em unscaled} iterates are: $\vec{r}_1^{-1} * \vec{x}^{s, t+1}$, $\vec{r}_2^{-1} * \vec{y}^{s, t+1}$ ($*$ denotes component-wise product). Let $\vec{c}$, $\vec{K} = (\vec{G}, \vec{A})$, $\vec{q} = (\vec{h},\vec{b})$, $\vec{\ell}$, $\vec{u}$ denote the {\em original problem data}. Then,
\begin{equation}
\begin{aligned}
\mathtt{gap}(\vec{x},\vec{y})&\coloneq\Abs{\vec{q}^\top\vec{y}+\vec{\ell}^\top\vec{\lambda}^{+}-\vec{u}^\top\vec{\lambda}^{-}-\vec{c}^\top\vec{x}}, \\
\mathtt{pfeas}(\vec{x},\vec{y}) &\coloneq \Norm{\begin{bmatrix}\vec{A}\vec{x}-\vec{b} \\ (\vec{h}-\vec{G}\vec{x})^+\end{bmatrix}}_2, \\ 
\mathtt{dfeas}(\vec{x},\vec{y}) &\coloneq \Norm{\vec{c}-\vec{K}^\top\vec{y}-\vec{\lambda}}_2.
\end{aligned}
\end{equation}
Given primal-dual pair $\vec{x},\vec{y}$ in the above, dual variable $\vec{\lambda}$---which corresponds to the lower/upper bound constraints $\vec{\ell}\le\vec{x}\le\vec{u}$---is computed as $$\vec{\lambda}=\proj_{\Lambda_1\times\cdots\times\Lambda_n}(\vec{c}-\vec{K}^\top\vec{y})$$ by clamping the coordinates of $\vec{c}-\vec{K}^\top\vec{y}$ to the appropriate range $\Lambda_i$.
Let $\terminate$ denote the boolean expression $$\begin{aligned}
\terminate(\vec{x},\vec{y}) &=
\mathbf{1}\left[\gap(\vec{x},\vec{y})\le\varepsilon_{\tol}(1+\abs{\vec{q}^\top\vec{y}+\vec{\ell}^\top\vec{\lambda}^+-\vec{u}^\top\vec{\lambda}^-}+\abs{\vec{c}^\top\vec{x}})\right] \\ &\qquad\wedge \mathbf{1}\left[\pfeas(\vec{x},\vec{y}) \le \varepsilon_{\tol}(1+\norm{\vec{q}}_2)\right] \wedge\mathbf{1}\left[\dfeas(\vec{x},\vec{y})\le \varepsilon_{\tol}(1+\norm{\vec{c}}_2)\right].
\end{aligned}$$ PDLP terminates when $\terminate$ returns $\mathtt{TRUE}$. 

\subsection{Restarts}\label{app:restarts}

KKT error is a progress metric that combines dual gap and primal-and-dual feasibility. It is used to determine when the inner loop is restarted from a new point and is defined by $$\kkt_{\omega}(\vec{x},\vec{y}) = \sqrt{\omega^2\Norm{\begin{bmatrix}\vec{A}\vec{x}-\vec{b} \\ (\vec{h}-\vec{G}\vec{x})^+\end{bmatrix}}_2^2 + \frac{\Norm{\vec{c}-\vec{K}^\top\vec{y}-\vec{\lambda}}_2^2}{\omega^2} + \Abs{\vec{q}^\top\vec{y}+\vec{\ell}^\top\vec{\lambda}^{+}-\vec{u}^\top\vec{\lambda}^{-}-\vec{c}^\top\vec{x}}^2}.$$ (We have not directly used the definitions of $\gap$, $\pfeas$ and $\dfeas$ within $\kkt$ since the former three metrics are always computed with respect to the original problem data. KKT error is computed with respect to the current iterates and preconditioned data. This includes computation of $\vec{\lambda}$.) 

The restart condition $\mathtt{res}(\widehat{\vec{x}}^{s,t+1},\widehat{\vec{y}}^{s,t+1},\widehat{\vec{x}}^{s,t},\widehat{\vec{y}}^{s,t}, \vec{x}^{s,0},\vec{y}^{s,0},\omega^s)$ is the OR of the following predicates
\begin{equation*}\begin{aligned} &\kkt_{\omega^s}(\widehat{\vec{x}}^{s, t+1}, \widehat{\vec{y}}^{s, t+1}) \le 0.2\cdot\kkt(\vec{x}^{s, 0}, \vec{y}^{s, 0}; \omega^s) \\ &\kkt_{\omega^s}(\widehat{\vec{x}}^{s, t+1}, \widehat{\vec{y}}^{s, t+1})\le 0.8 \cdot\kkt_{\omega^s}(\vec{x}^{s, 0}, \vec{y}^{s, 0})\wedge \kkt_{\omega^s}(\widehat{\vec{x}}^{s, t+1}, \widehat{\vec{y}}^{s, t+1}) > \kkt_{\omega^s}(\widehat{\vec{x}}^{s, t}, \widehat{\vec{y}}^{s, t}) \\ &t\le 0.36k\end{aligned}\end{equation*}

{\em For restarts, gap, primal feasibility, and dual feasibility are evaluated with respect to the preconditioned problem data.}

\subsection{Adaptive PDHG Step Algorithm}
Algorithm~\ref{alg:adaptivepdhg} is the line search routine developed by~\citet{applegate2021practical} to determine the step size used in the PDHG step for each iteration of the inner loop of PDLP. It dampens the candidate step size $\eta$ until the bound of line~\ref{line:apdhg_bound} is met. That bound comes from the convergence rate proof for PDHG~\citep{chambolle2016ergodic}.
\begin{algorithm}[h]
{\small
\caption{$\mathtt{adaptivePDHG}(\vec{x},\vec{y}, \omega, \widehat{\eta}, k)$}\label{alg:adaptivepdhg}
\begin{algorithmic}[1]
\State $\eta\leftarrow\widehat{\eta}$
    \While{$\mathtt{TRUE}$}
        \State $\vec{x}'\leftarrow\proj_X(\vec{x}-\tfrac{\eta}{\omega}(\vec{c} - \vec{K}^\top\vec{y}))$;~$\vec{y}'\leftarrow\proj_Y(\vec{y}+\eta\omega(\vec{q}-\vec{K}(2\vec{x}' - \vec{x})))$
        \State $\overline{\eta}\leftarrow\tfrac{\omega\norm{\vec{x'}-\vec{x}}_2^2 + (1/\omega)\norm{\vec{y}'-\vec{y}}_2^2}{2(\vec{y}'-\vec{y})^\top\vec{K}(\vec{x}'-\vec{x})}$ {\bf if} $2(\vec{y}'-\vec{y})^\top\vec{K}(\vec{x}'-\vec{x}) > 0$ {\bf else} $\infty$\label{line:apdhg_bound}
        \State $\eta'\leftarrow\min\{(1 - (k+1)^{-0.3})\overline{\eta}, (1 + (k+1)^{-0.6})\eta\}$
        \If{$\eta\le\overline{\eta}$}
            \State \Return $(\vec{x}', \vec{y}'), \eta, \eta'$
        \EndIf
        \State $\eta\leftarrow\eta'$
    \EndWhile
\end{algorithmic}
}
\end{algorithm}

The main loop in Algorithm~\ref{alg:adaptivepdhg} does not have an explicit bound on the number of iterations. However, we can establish the following bound that is dependent on the input step size $\widehat{\eta}$ and the current value of the total iteration counter $k$.

\begin{lemma}\label{lemma:adaptive-pdhg-iter-bound}
    Assume $\norm{\vec{K}}_{\infty}\ge 1$.
    The adaptive PDHG step algorithm (Algorithm~\ref{alg:adaptivepdhg}) terminates in at most $$\frac{\log(\widehat{\eta}\norm{\vec{K}}_2)}{\log(1/(1 - (k+1)^{-0.3}))} = O\left(ST\frac{\log\norm{\vec{K}}_2}{\log(1/(1 - (k+1)^{-0.3}))}\right)$$ iterations.
\end{lemma}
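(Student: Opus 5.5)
The key fact I will rely on is that the line-search bound $\overline{\eta}$ is never smaller than $1/\norm{\vec{K}}_2$. Cauchy--Schwarz gives
\[
2(\vec{y}'-\vec{y})^\top\vec{K}(\vec{x}'-\vec{x})\le 2\norm{\vec{K}}_2\norm{\vec{y}'-\vec{y}}_2\norm{\vec{x}'-\vec{x}}_2 .
\]
AM--GM gives
\[
\omega\norm{\vec{x}'-\vec{x}}_2^2+\omega^{-1}\norm{\vec{y}'-\vec{y}}_2^2\ge 2\norm{\vec{x}'-\vec{x}}_2\norm{\vec{y}'-\vec{y}}_2 .
\]
Whenever the denominator in line~\ref{line:apdhg_bound} is positive, combining these yields $\overline{\eta}\ge 1/\norm{\vec{K}}_2$. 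Otherwise $\overline{\eta}=\infty$. So any candidate $\eta\le 1/\norm{\vec{K}}_2$ passes the test $\eta\le\overline{\eta}$, and the loop returns.

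Next I track how the candidate step shrinks. If an iteration fails, then $\eta>\overline{\eta}$. The new candidate is then $\eta'\le(1-(k+1)^{-0.3})\overline{\eta}<(1-(k+1)^{-0.3})\eta$. Within one call $k$ is fixed, so after $j$ failed iterations the candidate is at most $\widehat{\eta}(1-(k+1)^{-0.3})^j$. The loop must return once this quantity drops to $1/\norm{\vec{K}}_2$ or below. That happens once $j\ge \log(\widehat{\eta}\norm{\vec{K}}_2)/\log(1/(1-(k+1)^{-0.3}))$, which is the first bound. (If $\widehat{\eta}\norm{\vec{K}}_2\le1$, the very first check succeeds.)

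For the $O(\cdot)$ form I need $\log(\widehat{\eta}\norm{\vec{K}}_2)=O(ST\log\norm{\vec{K}}_2)$, uniformly over the run. This means bounding how large $\widehat{\eta}$ can grow. Initially $\widehat{\eta}^{0,0}=1/\norm{\vec{K}}_\infty\le1$, using $\norm{\vec{K}}_\infty\ge1$. Each returned $\eta'$ satisfies $\eta'\le(1+(k+1)^{-0.6})\eta\le 2\eta$. The returned $\eta$ is itself at most the input $\widehat{\eta}$. Hence $\widehat{\eta}$ at most doubles per outer call, and it is carried unchanged across restarts. Over at most $ST$ calls this gives $\widehat{\eta}\le 2^{ST}$.

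Therefore $\log(\widehat{\eta}\norm{\vec{K}}_2)\le ST\log2+\log\norm{\vec{K}}_2$. This is $O(ST\log\norm{\vec{K}}_2)$ as long as $\log\norm{\vec{K}}_2$ is bounded below by a constant, which I take from the standing assumption $\norm{\vec{K}}_\infty>1$ on the instances. The one step I expect to need care is tying $\norm{\vec{K}}_2$ to $\norm{\vec{K}}_\infty$ after preconditioning so that this constant lower bound is justified. If that link fails, I would state the bound as $O(ST+\log\norm{\vec{K}}_2)$ instead, which is equally usable downstream.
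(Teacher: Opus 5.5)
Your proof is correct and follows the paper's argument: the same key fact $\overline{\eta}\ge 1/\norm{\vec{K}}_2$ (which you derive yourself via Cauchy--Schwarz and AM--GM, whereas the paper cites it from the original PDLP work), the same geometric shrinkage by a factor $1-(k+1)^{-0.3}$ per rejected round, and the same doubling argument giving $\widehat{\eta}\le 2^{ST}/\norm{\vec{K}}_\infty\le 2^{ST}$. Your caveat about the final $O(\cdot)$ step is well founded. The paper's ``substitution yields the desired bound'' silently needs $\log\norm{\vec{K}}_2$ to be bounded below by a positive constant, which $\norm{\vec{K}}_\infty\ge 1$ does not guarantee, so $O\bigl((ST+\log\norm{\vec{K}}_2)/\log(1/(1-(k+1)^{-0.3}))\bigr)$ is what the argument actually delivers.
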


\begin{proof}
    The key fact is that in Algorithm~\ref{alg:adaptivepdhg}, $\overline{\eta} \ge 1/\norm{\vec{K}}_2$ always holds~\citep{applegate2021practical}. So, an upper bound on the number of iterations of the main loop is the number of iterations until $\eta$ drops below $1/\norm{\vec{K}}_2$. Now, for each round that $\eta$ is rejected (that is, the if-statement evaluates to false), we have $\eta > \overline{\eta}$, so $(1 - (k+1)^{-0.3})\overline{\eta}\le (1 + (k+1)^{-0.6})\eta$. So, $\eta$ is updated to $(1 - (k+1)^{-0.3})\overline{\eta}\le (1 - (k+1)^{-0.3})\eta$, that is, it reduces by a factor $(1 - (k+1)^{-0.3})$ on each rejected round, which shows that there are at most $\log(\widehat{\eta}\norm{K}_2)/\log(1 / (1 - (k+1)^{-0.3}))$ iterations. The right-hand-side bound follows from the fact that Algorithm~\ref{alg:adaptivepdhg} is entered at most $ST$ times, and on a final loop iteration where $\eta$ is accepted, it in the worst case increases by a factor of $(1 + (k+1)^{-0.6})$. Naively upper bounding this quantity by $2$ yields the global upper bound $\widehat{\eta}\le \widehat{\eta}^{0,0} 2^{ST} = 2^{ST} / \norm{\vec{K}}_{\infty} < 2^{ST}$. Substitution yields the desired bound.
\end{proof}

Lemma~\ref{lemma:adaptive-pdhg-iter-bound} is critical to establishing bounded Pfaffian complexity of PDLP (Appendix~\ref{app:pdlp}).

\section{Additional Details from Section~\ref{sec:pdhg}}\label{app:pdhg}

We use the following result due to~\citet{warren1968lower}: for $r$ polynomials $p_1,\ldots, p_r\in\R[z_1,\ldots, z_k]$ each of degree at most $d$, the number of connected components of $\R^k\setminus\bigcup_{i=1}^r\{(z_1,\ldots, z_k) : p_i(z_1,\ldots, z_k) = 0\}$ is at most $(4edr/k)^k$. The zero set of a polynomial is called an {\em algebraic curve}. Finally, recall that a {\em rational function} is a quotient of polynomials, and the {\em degree} of a rational function is the maximum degree of the two polynomials that define it.

\begin{proof}[Counting argument in the proof of Lemma~\ref{lemma:vanilla_pdhg_iterates}]
    At round $t$ we add a total of $L+U$ curves for the primal update and $m_1$ curves for the dual update, all with degree at most $2t$. Since $K$ algebraic curves with degree $\Delta$ induce at most $O(K^2\Delta^2)$ connected components, for each step $t$ we get a refinement of size at most $O(((L+U+m_1)t)^2(2t)^2)$. This is because, for any connected component induced by the algebraic curves up until round $t$, the cell is partitioned into smaller cells with boundaries given by at most $(L+U+m_1)t$ distinct algebraic curves. Across all rounds we therefore have at most $O(2^{2T}((L+U+m_1)^{2T}T^{4T})$ algebraic curves with degree at most $2T$ such that within each connected component they induce, $x_i^{(t)}$ and $y_j^{(t)}$ are rational functions in $\eta,\omega$ of degree at most $2t$, for all $i, j, t$. 
\end{proof}

\section{Proofs and additional details from Section \ref{sec:pdlp}}\label{app:pdlp}

\subsection{Proof of Theorem \ref{thm:pdim-theta}}
\begin{proof}
Fix an instance $x\in\mathcal X$, and consider the dual utility function
\[
u_x^*:\Theta\to\mathbb{Z},\qquad u_x^*(\theta)=u_\theta(x).
\]
We show that $u_x^*$ is piecewise Pfaffian with the claimed quantitative bounds.

Under the truncation parameters $(S,T)$ and the backtracking bound $B$ on the number of iterations of the adaptive line search in Algorithm 3,
the full execution of Algorithm~1 performs only finitely many scalar predicates.
We enumerate them exactly.

\paragraph{Predicates inside one backtracking round of Algorithm~3.}
Fix one call to Algorithm~3.
Each execution of the backtracking loop contributes:
\begin{itemize}
    \item $L+U$ primal projection predicates for the coordinates of
    \[
    x'=\operatorname{proj}_X\!\left(x-\frac{\eta}{\omega}(c-K^\top y)\right);
    \]
    \item $m_1$ dual projection predicates for the first block of
    \[
    y'=\operatorname{proj}_Y\!\left(y+\eta\omega(q-K(2x'-x))\right);
    \]
    \item one sign predicate for the denominator test
    \[
    2(y'-y)^\top K(x'-x)>0
    \]
    in line~4;
    \item one predicate for the branch in the minimum
    \[
    \eta'=\min\{(1-(k+1)^{-0.3})\eta,\ (1+(k+1)^{-0.6})\eta\};
    \]
    \item one acceptance predicate for the test in line~6.
\end{itemize}
Hence one backtracking round contributes exactly
\[
L+U+m_1+3
\]
scalar predicates.

Since each call to Algorithm~3 uses at most $B$ backtracking rounds, one call contributes at most
\[
B(L+U+m_1+3)
\]
predicates.

\paragraph{Predicates outside Algorithm~3 but inside one PDLP inner iteration.}
After the adaptive step returns, the PDLP inner iteration contributes:
\begin{itemize}
    \item one KKT comparison in line~7;
    \item three scalar predicates from the exact termination criterion $\mathrm{term}$ in Appendix~\ref{app:termination};
    \item three scalar predicates from the exact restart criterion $\mathrm{res}$ in Appendix~\ref{app:restarts}.
\end{itemize}
Hence the non-line-search part of one inner iteration contributes exactly
\[
1+3+3=7
\]
predicates.

Therefore one full PDLP inner iteration contributes at most
\[
B(L+U+m_1+3)+7
\]
predicates.

\paragraph{Outer-loop predicates.}
At each outer update, line~12 tests whether
\[
\|y_{s,0}-y_{s-1,0}\|_2>10^{-12}
\quad\text{and}\quad
\|x_{s,0}-x_{s-1,0}\|_2>10^{-12},
\]
which contributes two scalar predicates per outer iteration.
Across at most $S$ outer iterations this contributes $2S$ predicates.

Thus the total number of scalar predicate functions that can arise in the truncated execution is at most
\[
K_\theta^{\mathrm{full}}
=
ST\bigl(B(L+U+m_1+3)+7\bigr)+2S.
\]

From Lemma~\ref{lemma:adaptive-pdhg-iter-bound}, we can plug in $B = O((ST)^{1.3}\tilde{K})$.

\paragraph{Branch partition.}
Each predicate can be written as
\[
\tau(\theta)\ge 0
\quad\text{or}\quad
\tau(\theta)>0
\]
for some scalar function $\tau$ built from current intermediate quantities.
Let $\Xi_x$ be the union of all zeros of all such predicate functions.
Since only finitely many predicates occur, $\Xi_x$ is finite.
Hence $\Theta\setminus \Xi_x$ is a finite union of open intervals, and on each connected component
$I\subseteq \Theta\setminus\Xi_x$ every predicate has fixed sign.
Therefore the full truncated execution, including the exact line-search loop, follows a fixed branch
pattern on each such interval $I$. In particular, the acceptance predicate $\eta\le\overline{\eta}$ in Algorithm 3, line 6, is among the boundary functions. Its sign being fixed on $I$ means the number of backtracking iterations is also fixed on $I$, and hence the loop terminates at a fixed iteration count across the interval.

\paragraph{Pfaffianity on one branch interval.}
Fix one such interval $I$.
We prove by induction over the chronological execution order that every intermediate scalar quantity
computed by the algorithm is Pfaffian in $\theta$ on $I$.

The base case is immediate: the initial data, the preconditioned instance (with $\alpha$ fixed), the
initial step-size $\widehat\eta_{0,0}$, and the initial primal weight $\omega_0$ are constants in $\theta$.

For the inductive step, all updates inside Algorithm~3 are obtained from previously computed
Pfaffian quantities by arithmetic operations, divisions on regions where denominators are nonzero,
norms, inner products, and fixed branch selections. Therefore the quantities produced by each
backtracking round remain Pfaffian. The same holds for the averaging step, the KKT comparison,
the exact termination and restart quantities, and the line~12 primal-weight update
\[
\omega_s
=
\exp\!\left(
\theta\log\frac{\|y_{s,0}-y_{s-1,0}\|_2}{\|x_{s,0}-x_{s-1,0}\|_2}
+(1-\theta)\log \omega_{s-1}
\right)
\]
on the active branch, while on the inactive branch $\omega_s=\omega_{s-1}$.
Hence all intermediate quantities, and in particular the final utility value $u_x^*(\theta)$, are
Pfaffian on $I$.

Thus $u_x^*$ is piecewise Pfaffian.

\paragraph{Pfaffian piecewise-structure parameters.}
Take as boundary functions all scalar predicate functions appearing in the truncated execution.
Then
\[
k_G\le K_\theta^{\mathrm{full}}.
\]
A full sign pattern of these predicates determines at most one execution formula and hence at most
one Pfaffian piece function, so
\[
k_F\le 2^{K_\theta^{\mathrm{full}}}.
\]

Each backtracking round introduces only $O(1)$ new reciprocal or auxiliary functions, and each
outer update introduces only $O(1)$ new logarithmic/exponential functions from line~12.
Since there are at most $STB$ backtracking rounds in total, the chain length on each
piece satisfies
\[
q=O(STB).
\]
Using the same conservative bookkeeping as in the earlier $\theta$-analysis, the maximal Pfaffian
degree and the maximal degree of the functions arising on a piece satisfy
\[
M,\Delta\le 2^{O(STB)}.
\]
Hence the dual class $U_\Theta^*$ is $(k_F,k_G,q,M,\Delta,1)$-Pfaffian piecewise structured.

Theorem~\ref{thm:pfaffian-pdim} with parameter dimension $d=1$ yields
\[
\operatorname{Pdim}(U_\Theta)
\le
q^2+2q\log(\Delta+M)+2\log\!\bigl(\Delta(k_F+k_G)\bigr)+16.
\]
Substituting the bounds above gives
\[
\operatorname{Pdim}(U_\Theta)
=
O\!\Big((STB)^2 + K_\theta^{\mathrm{full}}\Big).
\]
Finally, substituting the definition of $K_\theta^{\mathrm{full}}$ and the bound on $B$ in Lemma \ref{lemma:adaptive-pdhg-iter-bound} yields the claimed bound.
\end{proof}

\subsection{Proof of Theorem~\ref{thm:pdim-alpha-formal}}

\begin{proof}
Fix an instance
\[
x=((x^{\mathrm{init}},y^{\mathrm{init}});c,G,h,A,b,\ell,u)\in\mathcal X.
\]
We prove that the dual utility function
\[
u_x^*:A\to\mathbb{Z},\qquad u_x^*(\alpha)=u_\alpha(x),
\]
is piecewise Pfaffian with the claimed quantitative bounds. Let $B$ be a bound on the number of iterations of the adaptive line search in Algorithm 3

\paragraph{Preconditioning phase.}
Algorithm~2 first performs ten Ruiz-scaling iterations, which are independent of $\alpha$.
After these ten iterations, let $\bar K$ denote the resulting matrix, and let
\[
r_j:=\|\bar K_{j,\cdot}\|_2>0\quad (j=1,\dots,m),\qquad
c_i:=\|\bar K_{\cdot,i}\|_2>0\quad (i=1,\dots,n),
\]
where positivity is guaranteed by assumption.

The final Pock--Chambolle scaling is
\[
D_1(\alpha)=\operatorname{diag}(r_1^{\,2-\alpha},\dots,r_m^{\,2-\alpha}),
\qquad
D_2(\alpha)=\operatorname{diag}(c_1^{\,\alpha},\dots,c_n^{\,\alpha}).
\]
Define
\[
\rho_j(\alpha):=r_j^{\,2-\alpha},\qquad
\gamma_i(\alpha):=c_i^{\,\alpha},\qquad
\delta_i(\alpha):=c_i^{-\alpha}.
\]
Each of these is a one-variable Pfaffian function of $\alpha$, since
\[
r_j^{\,2-\alpha}=r_j^2 e^{-(\log r_j)\alpha},
\qquad
c_i^\alpha=e^{(\log c_i)\alpha},
\qquad
c_i^{-\alpha}=e^{-(\log c_i)\alpha}.
\]
Therefore all entries of the preconditioned data
\[
\widetilde K(\alpha)=D_1(\alpha)\bar K D_2(\alpha),\qquad
\widetilde c(\alpha)=D_2(\alpha)\bar c,\qquad
\widetilde q(\alpha)^\top=\bar q^\top D_1(\alpha),
\]
\[
\widetilde u(\alpha)=D_2(\alpha)^{-1}\bar u,\qquad
\widetilde \ell(\alpha)=D_2(\alpha)^{-1}\bar \ell,
\]
as well as the accumulated scaling vectors $r_1(\alpha),r_2(\alpha)$ used to unscale the iterates in
the exact termination criterion, are Pfaffian in $\alpha$.
Hence the preconditioning phase contributes a Pfaffian chain of length
\[
O(n+m).
\]

\paragraph{Predicate count.}
Under the truncation parameters $(S,T)$ and the backtracking bound $B$, the exact
same predicate-counting argument as in the proof of Theorem~\ref{thm:pdim-theta} applies.
Each backtracking round in Algorithm~3 contributes
\[
L+U+m_1+3
\]
predicates, each full PDLP inner iteration contributes an additional $7$ predicates from the KKT,
termination, and restart checks, and the line~12 primal-weight update contributes $2S$
outer-level predicates in total.
Therefore the total number of scalar predicate functions is at most
\[
K_\alpha^{\mathrm{full}}
=
ST\bigl(B(L+U+m_1+3)+7\bigr)+2S.
\]

Note that $K_\alpha^{\mathrm{full}} = K_\theta^{\mathrm{full}}$; the predicate counts are identical since the same branching decisions arise regardless of which parameter is being tuned. What differs is the Pfaffian chain length, which is $O(n + m + STB)$ for $\alpha$ versus $O(STB)$ for $\theta$, reflecting the additional $\alpha$-dependence introduced by preconditioning.

\paragraph{Branch partition and Pfaffianity on each piece.}
Let $\Xi_x$ be the union of the zero sets of all scalar predicate functions arising in the truncated
execution on instance $x$.
Then $A\setminus \Xi_x$ is a finite union of open intervals, and on each connected component
$I\subseteq A\setminus \Xi_x$ every branch outcome is fixed.

Fix such an interval $I$.
We prove by induction over execution order that every intermediate scalar quantity is Pfaffian in
$\alpha$ on $I$.

The base case follows from the Pfaffianity of the preconditioned data and the initial quantities.
For the inductive step, all updates in Algorithm~3 and Algorithm~1 are obtained from already
Pfaffian quantities by arithmetic operations, division on regions of fixed denominator sign,
norms, inner products, logarithm, exponential, and fixed branch selection.
This covers:
\begin{itemize}
    \item the exact projected primal and dual updates in Algorithm~3;
    \item the denominator-update formula and the acceptance comparison in Algorithm~3;
    \item the weighted averaging step in line~6 of Algorithm~1;
    \item the exact KKT quantity from Appendix~\ref{app:restarts};
    \item the exact termination criterion $\mathrm{term}$, evaluated on unscaled iterates
    $(r_1^{-1}*x,r_2^{-1}*y)$;
    \item the exact restart criterion $\mathrm{res}$ from Appendix~\ref{app:restarts};
    \item the line~12 primal-weight update, which depends on $\alpha$ through the iterates.
\end{itemize}
Hence the entire execution trace, and thus the final utility value $u_x^*(\alpha)$, is Pfaffian on $I$.

Therefore $u_x^*$ is piecewise Pfaffian on $A$.

\paragraph{Pfaffian piecewise-structure parameters.}
Taking all scalar predicate functions as boundary functions gives
\[
k_G\le K_\alpha^{\mathrm{full}}.
\]
A full sign pattern determines at most one execution formula, hence
\[
k_F\le 2^{K_\alpha^{\mathrm{full}}}.
\]

The preconditioning phase contributes chain length $O(n+m)$, and the full truncated execution
contributes at most $O(STB)$ additional chain elements, so
\[
q=O(n+m+STB).
\]
Using the same conservative bookkeeping as before,
\[
M,\Delta\le 2^{O(STB)}.
\]
Thus the dual class $U_A^*$ is $(k_F,k_G,q,M,\Delta,1)$-Pfaffian piecewise structured.

Theorem~\ref{thm:pfaffian-pdim} with parameter dimension $d=1$ yields
\[
\operatorname{Pdim}(U_A)
\le
q^2+2q\log(\Delta+M)+2\log\!\bigl(\Delta(k_F+k_G)\bigr)+16.
\]
Substituting the displayed bounds gives
\[
\operatorname{Pdim}(U_A)
=
O\!\Big((n+m+STB)^2 + K_\alpha^{\mathrm{full}}\Big).
\]
Substituting the definition of $K_\alpha^{\mathrm{full}}$ and the bound on $B$ in Lemma \ref{lemma:adaptive-pdhg-iter-bound} yields the claimed bound.
\end{proof}

\subsection{Tuning both parameters simultaneously}\label{app:extensions}

\begin{theorem}[Pseudo-dimension bound for jointly tuning $(\theta,\alpha)$ in PDLP]
\label{thm:pdim-theta-alpha-full}
Fix truncation parameters $S,T\in\mathbb N$, and let $\mathcal X$ be a class of LP
instances together with initial primal-dual points. For each
$(\theta,\alpha)\in \Theta\times A\subseteq \mathbb R\times(0,2)$, let
\[
u_{\theta,\alpha}:\mathcal X\to\mathbb{Z}
\]
be a bounded utility function obtained by running the full truncated version of Algorithm~1,
including the exact restart and termination criteria from Appendix~\ref{app:pdlp} and the exact adaptive
line-search routine of Algorithm~3, with at most $S$ outer iterations and at most
$T$ inner iterations per outer iteration, where preconditioning uses Algorithm~2 with
parameter $\alpha$.

Assume:
\begin{enumerate}
    \item for every instance $x\in\mathcal X$, every row norm and column norm used in the final
    Pock--Chambolle scaling step of Algorithm~2 is strictly positive;
    \item there is a uniform bound $\widehat\eta_{\max}$ on the input step size $\widehat\eta$
    passed into each call to Algorithm~3 during the truncated execution (for example, the one prescribed by Lemma~\ref{lemma:adaptive-pdhg-iter-bound});
    \item writing $\widetilde K(\alpha)$ for the preconditioned matrix produced by Algorithm~2,
    the quantity
    \[
    \kappa_A:=\sup_{\alpha\in A}\|\widetilde K(\alpha)\|_2
    \]
    is finite.
\end{enumerate}

Define
\[
B_{\max}
:=
\left\lceil
\frac{
\log\!\big(\widehat\eta_{\max}\,\kappa_A\big)
}{
\log\!\Big(\frac{1}{1-(ST)^{-0.3}}\Big)
}
\right\rceil,
\]
and let
\[
U_{\Theta,A}
=
\{u_{\theta,\alpha}:\mathcal X\to\mathbb{Z}\mid (\theta,\alpha)\in\Theta\times A\}.
\]
Let $L=|\{i:\ell_i>-\infty\}|$, $U=|\{i:u_i<\infty\}|$, and $m:=m_1+m_2$, and define
\[
K_{\theta,\alpha}^{\mathrm{full}}
:=
ST\bigl(B_{\max}(L+U+m_1+3)+7\bigr)+2S.
\]

Then there exist parameters $q,M,\Delta,k_F,k_G$ such that the dual class
\[
U_{\Theta,A}^*
=
\{u_x^*:\Theta\times A\to\mathbb{Z}\mid x\in\mathcal X\},
\qquad
u_x^*(\theta,\alpha):=u_{\theta,\alpha}(x),
\]
is $(k_F,k_G,q,M,\Delta,2)$-Pfaffian piecewise structured, with
\[
q = O(n+m+STB_{\max}),
\qquad
M,\Delta \le 2^{O(STB_{\max})},
\]
\[
k_G\le K_{\theta,\alpha}^{\mathrm{full}},
\qquad
k_F\le 2^{K_{\theta,\alpha}^{\mathrm{full}}}.
\]
Consequently,
\[
\operatorname{Pdim}(U_{\Theta,A})
\le
4q^2
+4q\log(\Delta+M)
+8q\log 2
+4\log\!\bigl(\Delta(k_F+k_G)\bigr)
+32.
\]
In particular,
\[
\operatorname{Pdim}(U_{\Theta,A})
=
O\!\Big((n+m+STB_{\max})^2
+
K_{\theta,\alpha}^{\mathrm{full}}\Big),
\]
that is,
\[
\operatorname{Pdim}(U_{\Theta,A})
=
O\!\Big(
(n+m+STB_{\max})^2
+
STB_{\max}(L+U+m_1)
\Big).
\]
\end{theorem}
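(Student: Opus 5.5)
The proof merges the two single-parameter arguments (Theorems~\ref{thm:pdim-theta} and~\ref{thm:pdim-alpha-formal}) into one two-variable Pfaffian analysis and then applies Theorem~\ref{thm:pfaffian-pdim} with $d=2$.

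\textbf{Step 1: a uniform line-search bound.} Fix an instance $x\in\cX$ and view every intermediate scalar of the truncated execution as a function of $(\theta,\alpha)\in\Theta\times A$. First I would bound the number of backtracking rounds uniformly over the parameter box. The argument of Lemma~\ref{lemma:adaptive-pdhg-iter-bound} runs on the preconditioned matrix $\widetilde K(\alpha)$. It uses the fact $\overline\eta\ge 1/\|\widetilde K(\alpha)\|_2\ge 1/\kappa_A$ together with a contraction factor $1-(k+1)^{-0.3}$ on each rejected round. Since $k+1\le ST$, this factor is at most $1-(ST)^{-0.3}$. By assumption~2 the starting step is at most $\widehat\eta_{\max}$. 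So every call to Algorithm~\ref{alg:adaptivepdhg} uses at most $B_{\max}$ rounds, for every $(\theta,\alpha)$. This is the reason $B_{\max}$ is defined through $\kappa_A$ and $\widehat\eta_{\max}$ instead of the $\alpha$-dependent norm.

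\textbf{Step 2: predicates and the branch partition.} Next I would enumerate the predicates exactly as in the proof of Theorem~\ref{thm:pdim-theta}. Each backtracking round contributes $L+U+m_1+3$ predicates. Each inner iteration contributes $7$ more, from the KKT comparison, termination and restart. The outer loop contributes $2S$. This gives $k_G\le K^{\mathrm{full}}_{\theta,\alpha}$ boundary functions $\tau(\theta,\alpha)$. On each region of $\R^2$ cut out by a fixed sign pattern of these functions, the execution path is fixed, including the number of backtracking rounds, because the acceptance tests are among the boundaries. Hence $k_F\le 2^{k_G}$.

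\textbf{Step 3: Pfaffianity on a region.} On a fixed region I would induct along the execution order. The base case is the preconditioned data. Using assumption~1, these data are polynomials in the chain $(\alpha,\mathsf r_j^{2-\alpha},\mathsf c_i^{\pm\alpha})$ of length $O(n+m)$, exactly as in Theorem~\ref{thm:pdim-alpha-formal}. Every later step is built from earlier Pfaffian quantities by arithmetic, division by a denominator of fixed sign, norms, or fixed branch selection. Square roots, reciprocals and norms appear in $\kkt$, $\overline\eta$ and the averaging; each is handled by adjoining $O(1)$ chain elements per operation. The line~\ref{line:primal-weight-update} update $\exp(\theta\log(\cdot)+(1-\theta)\log\omega^s)$ is where $\theta$ enters. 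It adds $O(1)$ chain elements, namely a $\log$ of a ratio and an $\exp$, whose derivatives in both $\theta$ and $\alpha$ are polynomial in the earlier chain elements. Over $STB_{\max}$ rounds this gives $q=O(n+m+STB_{\max})$. The same degree bookkeeping as before, with degrees at most doubling per step, gives $M,\Delta\le 2^{O(STB_{\max})}$. The key thing to check is that partial derivatives with respect to both variables stay polynomial in the chain; this holds because the chain rule applies coordinatewise.

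\textbf{Step 4: conclusion and main obstacle.} Substituting $d=2$ into Theorem~\ref{thm:pfaffian-pdim} gives $4q^2+4q\log(\Delta+M)+\dots$. Here $2\log(\Delta(k_F+k_G))$ is $O(K^{\mathrm{full}}_{\theta,\alpha})$, and $q\log(\Delta+M)=O(q\cdot STB_{\max})\subseteq O(q^2)$. This yields the stated $O((n+m+STB_{\max})^2+STB_{\max}(L+U+m_1))$. The extra $8q\log 2$ and the doubled constants come from the paper's displayed form of the $d=2$ bound. I expect the main obstacle to be Step~1. The line-search length must not vary with $\alpha$, because otherwise the chain length and the predicate count would not be uniform. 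This is exactly why assumptions~2 and~3 are needed, and I would verify carefully that the Applegate et al.\ lower bound $\overline\eta\ge1/\|\widetilde K(\alpha)\|_2$ holds for each fixed $\alpha$.
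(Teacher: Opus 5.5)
Your proposal is correct and follows the paper's proof essentially step for step. The shared steps are: the uniform backtracking bound $B_{\max}$ from $k+1\le ST$, $\widehat\eta\le\widehat\eta_{\max}$ and $\|\widetilde K(\alpha)\|_2\le\kappa_A$; the identical predicate count $K^{\mathrm{full}}_{\theta,\alpha}$ with $k_F\le 2^{k_G}$; Pfaffianity on each sign region via the length-$O(n+m)$ preconditioning chain plus $O(1)$ chain elements per backtracking round and per $\theta$-update; and Theorem~\ref{thm:pfaffian-pdim} with $d=2$. The extra $8q\log 2$, the doubled logarithm and the $32$ in the displayed bound need no special source, since they only loosen what Theorem~\ref{thm:pfaffian-pdim} already gives at $d=2$.
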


\begin{proof}
Fix an instance
\[
x=((x^{\mathrm{init}},y^{\mathrm{init}});c,G,h,A,b,\ell,u)\in\mathcal X,
\]
and consider the dual utility function
\[
u_x^*:\Theta\times A\to\mathbb{Z},\qquad u_x^*(\theta,\alpha)=u_{\theta,\alpha}(x).
\]
We show that $u_x^*$ is piecewise Pfaffian with the claimed quantitative bounds.

\paragraph{Step 1: Pfaffian dependence on $(\theta,\alpha)$.}
By the positivity assumption on the row and column norms in the final Pock--Chambolle scaling,
the preconditioning maps
\[
\alpha\mapsto D_1(\alpha),\qquad \alpha\mapsto D_2(\alpha)
\]
have diagonal entries of the form $r_j^{\,2-\alpha}$ and $c_i^\alpha$, hence are Pfaffian in
$\alpha$. Therefore all entries of the preconditioned data
\[
\widetilde K(\alpha),\ \widetilde c(\alpha),\ \widetilde q(\alpha),\ \widetilde \ell(\alpha),\
\widetilde u(\alpha),
\]
and the accumulated scaling vectors used to unscale the iterates in the exact termination criterion,
are Pfaffian in $\alpha$.

Once the preconditioned data are fixed, every update in Algorithm~3 and Algorithm~1 is built from
previously computed quantities using arithmetic operations, division on regions where denominators
are nonzero, norms, inner products, logarithm, exponential, and branch selection. The line~12
primal-weight update is the only place where $\theta$ appears explicitly:
\[
\omega_s
=
\exp\!\left(
\theta\log\frac{\|y_{s,0}-y_{s-1,0}\|_2}{\|x_{s,0}-x_{s-1,0}\|_2}
+
(1-\theta)\log\omega_{s-1}
\right)
\]
on the active branch, and $\omega_s=\omega_{s-1}$ on the inactive branch.
Hence, on every region where all branch outcomes are fixed, every intermediate quantity is Pfaffian
in the two parameters $(\theta,\alpha)$.

\paragraph{Step 2: exact predicate count.}
We now count the scalar predicates appearing in one truncated execution.

Each backtracking round of Algorithm~3 contributes:
\begin{itemize}
    \item $L+U$ primal projection predicates;
    \item $m_1$ dual projection predicates;
    \item one sign predicate for the denominator test in line~4;
    \item one predicate for the branch in the minimum in line~5;
    \item one acceptance predicate in line~6.
\end{itemize}
Thus one backtracking round contributes exactly
\[
L+U+m_1+3
\]
predicates.

By Lemma~\ref{lemma:adaptive-pdhg-iter-bound}, each call to Algorithm~3 performs at most
\[
\frac{\log(\widehat\eta\|\widetilde K(\alpha)\|_2)}
{\log\!\bigl(1/(1-(k+1)^{-0.3})\bigr)}
\]
backtracking iterations. Since throughout the truncated execution we have
$k+1\le ST$, $\widehat\eta\le\widehat\eta_{\max}$, and
$\|\widetilde K(\alpha)\|_2\le \kappa_A$, every call performs at most $B_{\max}$ backtracking
iterations.

After the adaptive step returns, each PDLP inner iteration contributes:
\begin{itemize}
    \item one KKT comparison in line~7;
    \item three scalar predicates from the exact termination criterion $\mathrm{term}$;
    \item three scalar predicates from the exact restart criterion $\mathrm{res}$.
\end{itemize}
Hence one full PDLP inner iteration contributes at most
\[
B_{\max}(L+U+m_1+3)+7
\]
predicates.

Across at most $ST$ inner iterations this gives
\[
ST\bigl(B_{\max}(L+U+m_1+3)+7\bigr).
\]
In addition, line~12 contributes two norm-threshold predicates per outer iteration, hence at most
$2S$ predicates in total.
Therefore the total number of scalar predicate functions is at most
\[
K_{\theta,\alpha}^{\mathrm{full}}
=
ST\bigl(B_{\max}(L+U+m_1+3)+7\bigr)+2S.
\]

\paragraph{Step 3: branch partition.}
Each predicate can be written as
\[
\tau(\theta,\alpha)\ge 0
\quad\text{or}\quad
\tau(\theta,\alpha)>0
\]
for some scalar function $\tau$ built from the current intermediate quantities.
Let $\Gamma_x$ denote the union of the zero sets of all such predicate functions.
Since only finitely many predicates occur, $\Gamma_x$ is a finite union of Pfaffian hypersurfaces in
$\Theta\times A$.
On each connected component of
\[
(\Theta\times A)\setminus \Gamma_x
\]
all branch outcomes are fixed, hence the entire truncated execution follows one fixed execution
formula on that component.

Therefore $u_x^*$ is piecewise Pfaffian on $\Theta\times A$.

\paragraph{Step 4: Pfaffian piecewise-structure parameters.}
Take all scalar predicate functions as boundary functions. Then
\[
k_G\le K_{\theta,\alpha}^{\mathrm{full}}.
\]
A full sign pattern of these predicates determines at most one execution formula for the truncated
trace and hence at most one Pfaffian piece function, so
\[
k_F\le 2^{K_{\theta,\alpha}^{\mathrm{full}}}.
\]

The preconditioning phase contributes a chain of length $O(n+m)$ from the $\alpha$-dependent
scaling functions. Each backtracking round contributes only $O(1)$ additional reciprocal or
auxiliary functions, and each outer update contributes only $O(1)$ additional logarithmic/exponential
functions from the $\theta$-update. Since there are at most $STB_{\max}$ backtracking
rounds in total,
\[
q=O(n+m+STB_{\max}).
\]
Using the same conservative bookkeeping as in the one-parameter analyses,
\[
M,\Delta\le 2^{O(STB_{\max})}.
\]
Thus the dual class $U_{\Theta,A}^*$ is
$(k_F,k_G,q,M,\Delta,2)$-Pfaffian piecewise structured.

\paragraph{Step 5: apply Theorem~\ref{thm:pfaffian-pdim}.}
Applying Theorem~\ref{thm:pfaffian-pdim} with parameter dimension $d=2$ yields
\[
\operatorname{Pdim}(U_{\Theta,A})
\le
4q^2
+4q\log(\Delta+M)
+8q\log 2
+4\log\!\bigl(\Delta(k_F+k_G)\bigr)
+32.
\]
Substituting the bounds above gives
\[
\operatorname{Pdim}(U_{\Theta,A})
=
O\!\Big((n+m+STB_{\max})^2
+
K_{\theta,\alpha}^{\mathrm{full}}\Big).
\]
Finally, substituting the definition of $K_{\theta,\alpha}^{\mathrm{full}}$ yields the claimed bound.
\end{proof}

\newpage

\section{Complete experimental results}\label{app:exps}

This appendix section contains complete the experimental results for the various LP instance distributions considered.

\subsection{Transportation instances}

Figure~\ref{fig:transport_iterations} displays the single-parameter tuning iteration count results; Figure~\ref{fig:transport_runtime} displays the single-parameter tuning run-time results; Figure~\ref{fig:transport_iterations_heatmap} displays the iteration count grid search results; Figure~\ref{fig:transport_runtime_heatmap} displays the run-time grid search results.

\begin{figure}[!htb]
    \centering
    \includegraphics[trim={0cm 0cm 0cm 0cm},clip, width=0.495\linewidth]{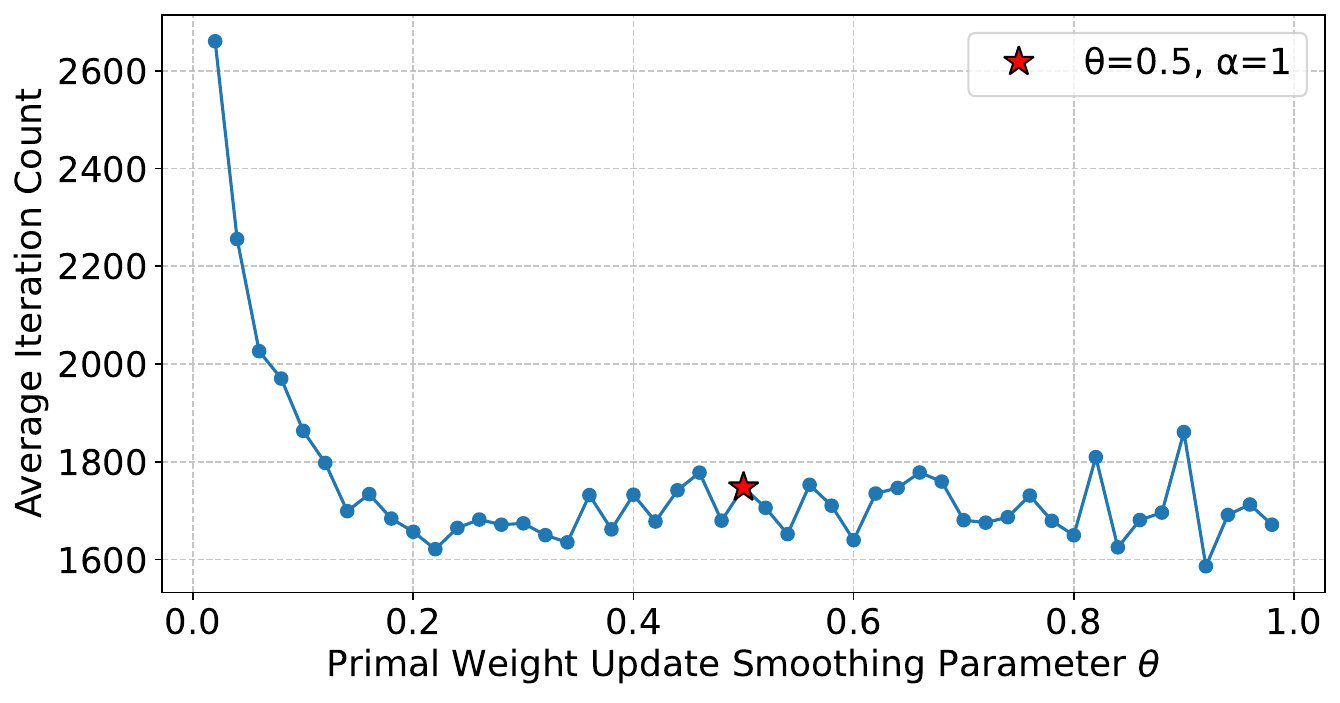}
    \includegraphics[trim={0 0 0 0},clip,width=0.495\linewidth]{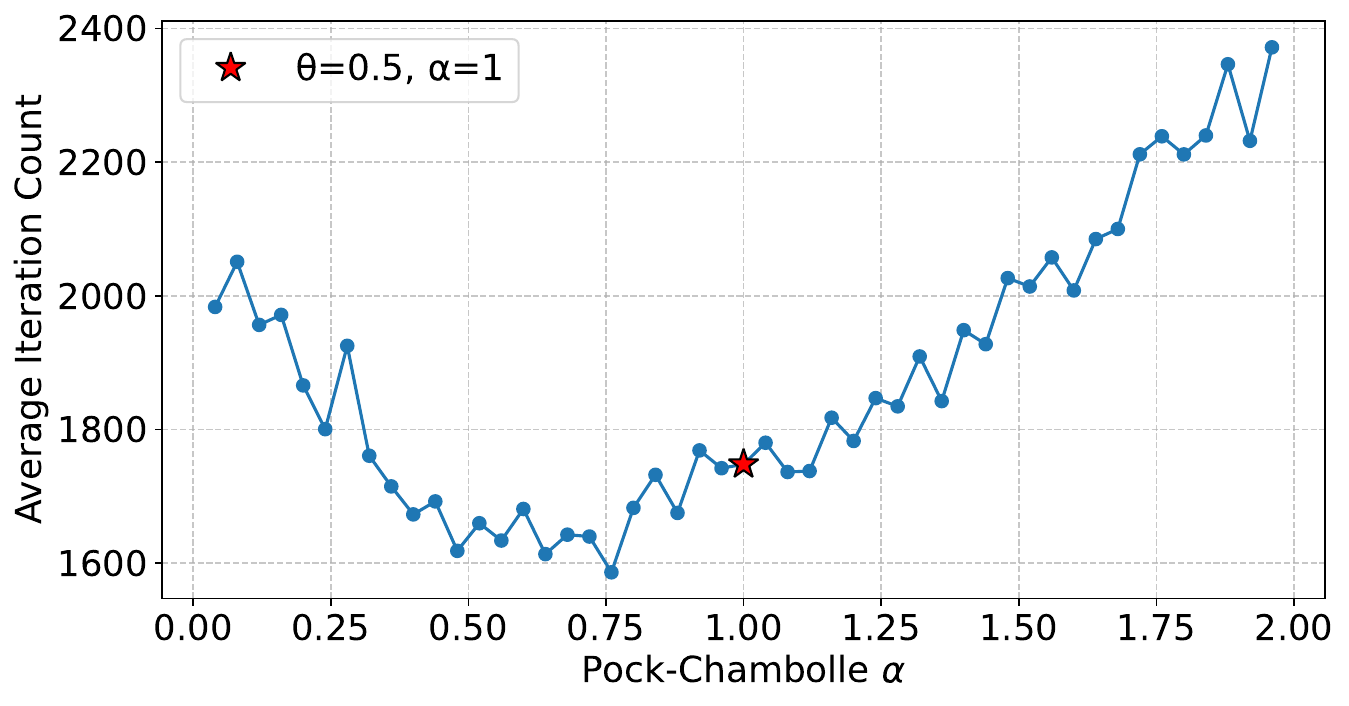}
    \includegraphics[trim={0cm 0cm 0cm 0cm},clip, width=0.495\linewidth]{figs/transport_60_90/iterations_vs_theta.pdf}
    \includegraphics[trim={0 0 0 0},clip,width=0.495\linewidth]{figs/transport_60_90/iterations_vs_alpha.pdf}
    \caption{Transportation iteration counts (averaged over 100 instances) with 30 suppliers and 45 customers (top), and 60 suppliers and 90 customers (bottom). {\bf (Left)} Effect of tuning $\theta$ with $\alpha = 1$. {\bf (Right)} Effect of tuning $\alpha$ with $\theta = 0.5$.}
    \label{fig:transport_iterations}
\end{figure}

\begin{figure}[!htb]
    \centering
    \includegraphics[trim={0cm 0cm 0cm 0cm},clip, width=0.495\linewidth]{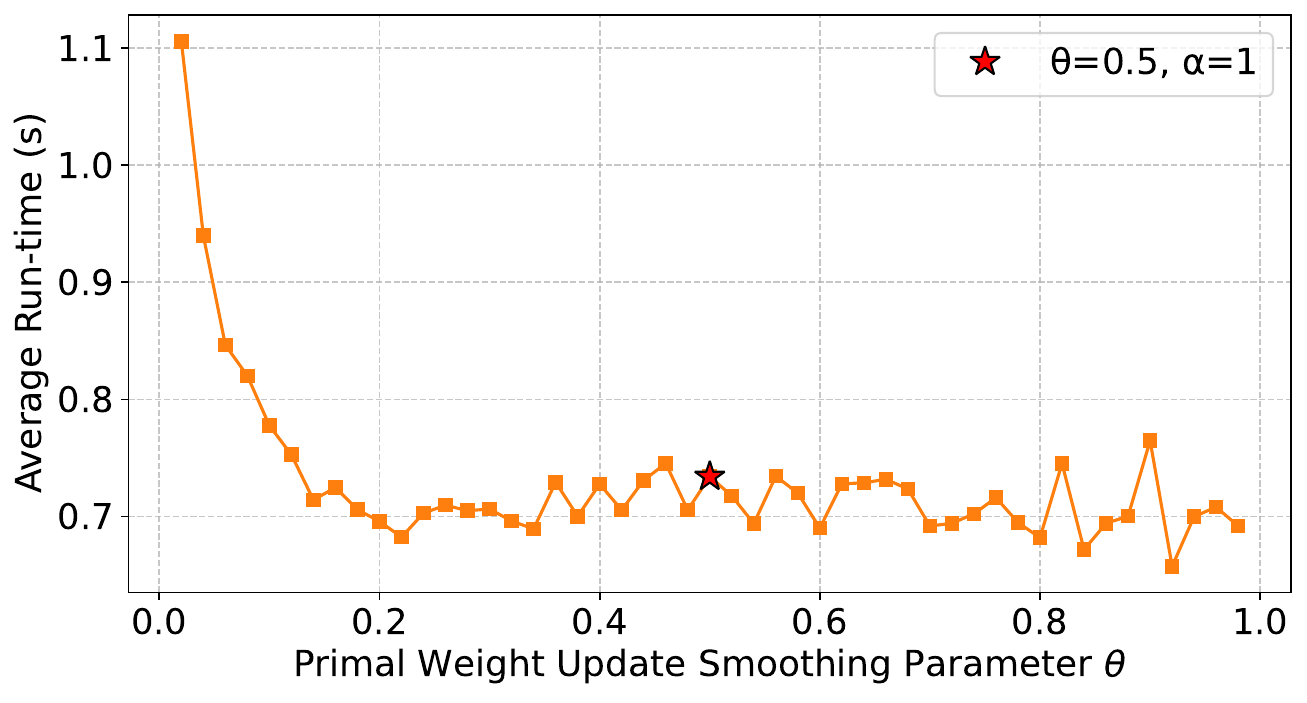}
    \includegraphics[trim={0 0 0 0},clip,width=0.495\linewidth]{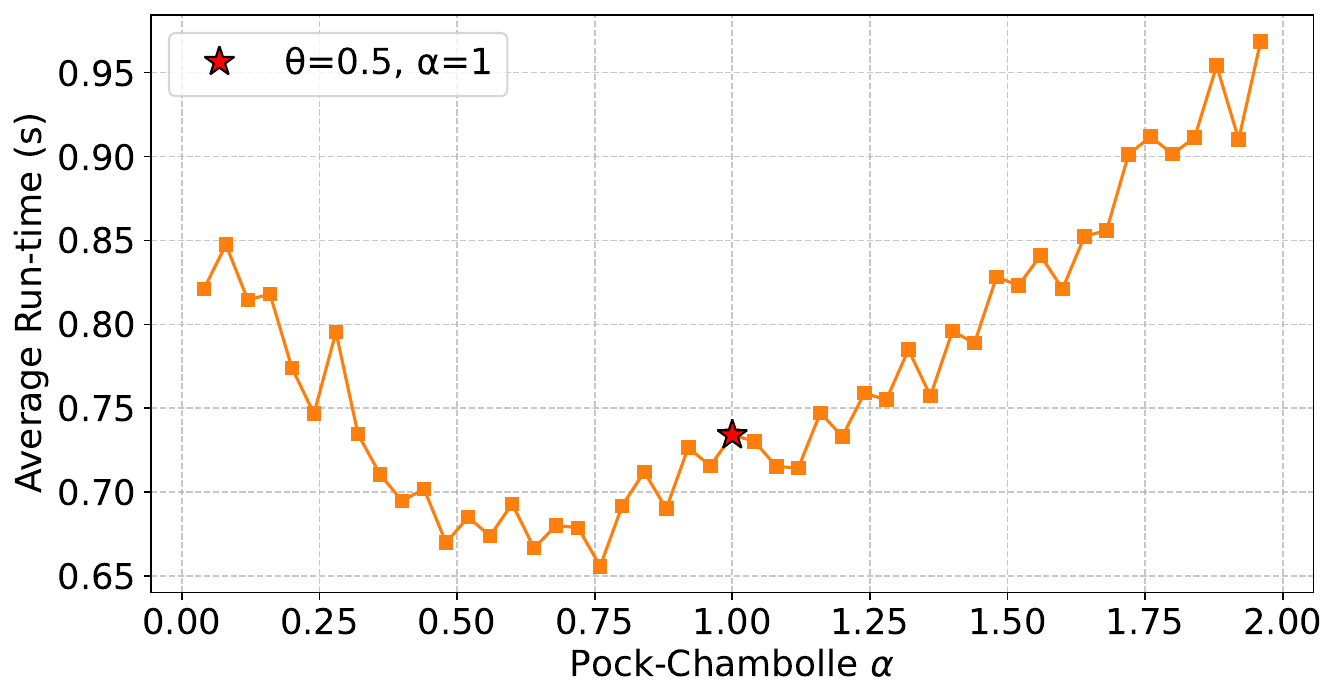}
    \includegraphics[trim={0cm 0cm 0cm 0cm},clip, width=0.495\linewidth]{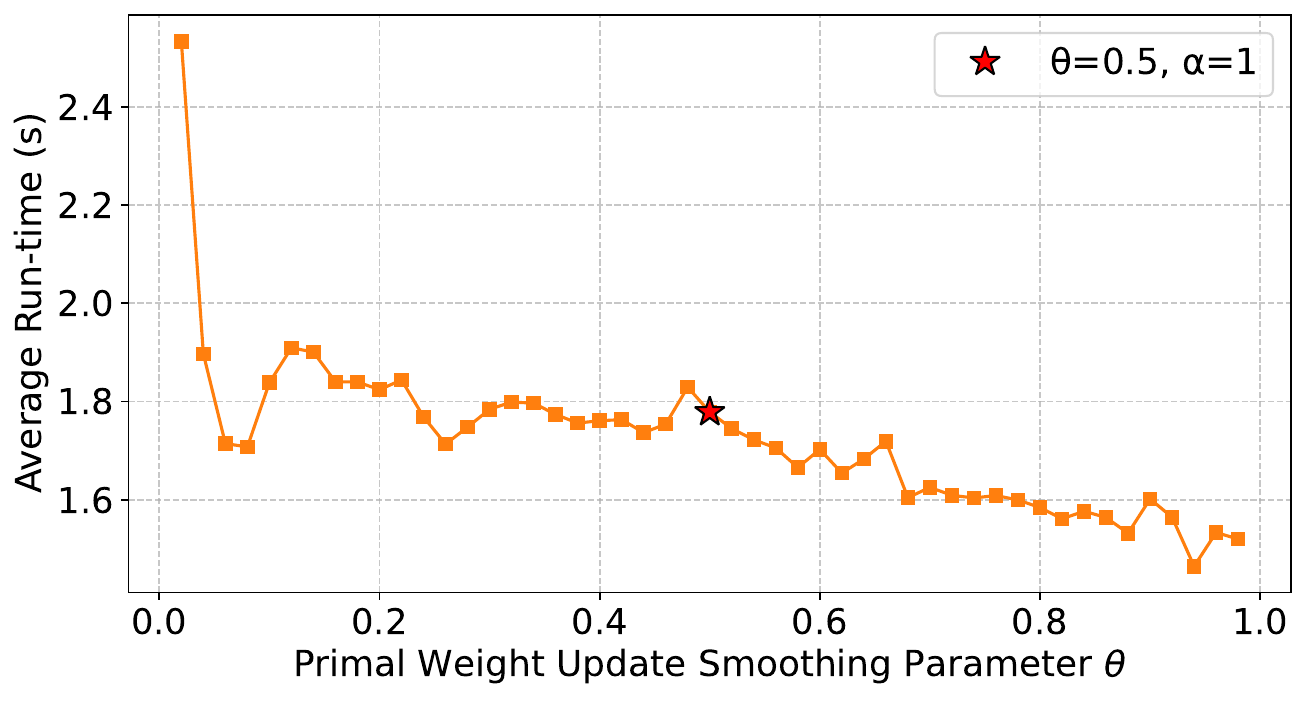}
    \includegraphics[trim={0 0 0 0},clip,width=0.495\linewidth]{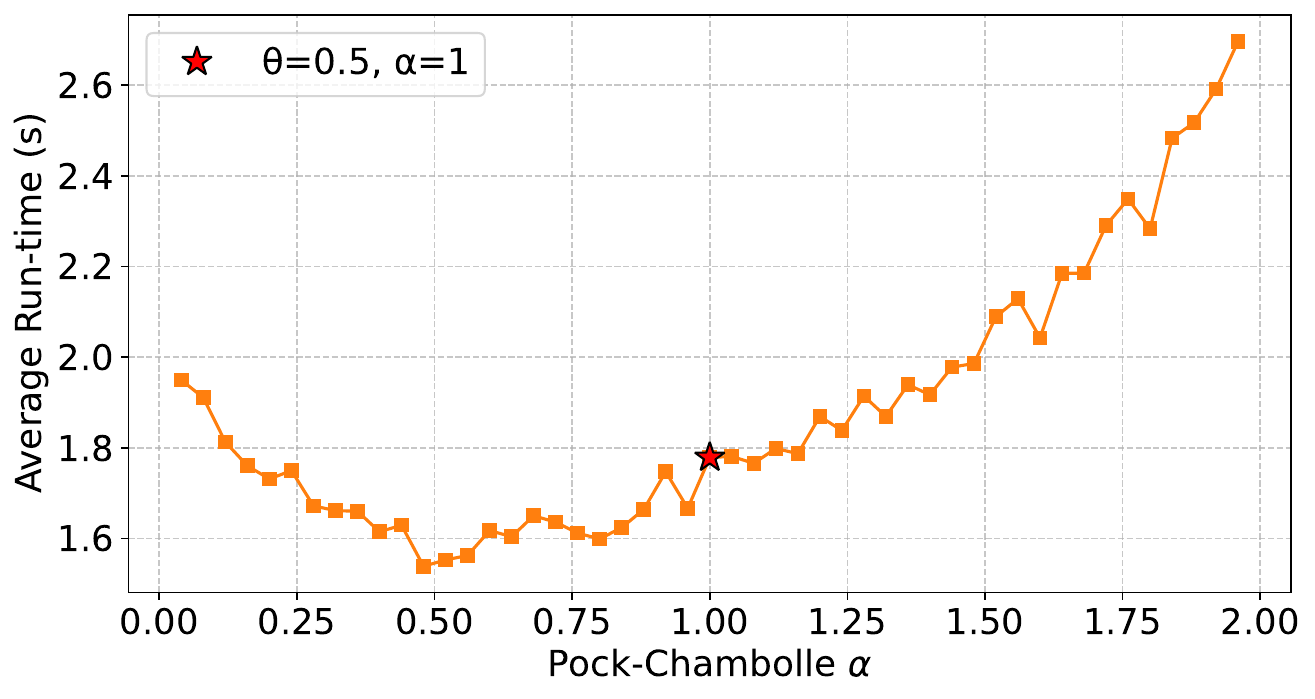}
    \caption{Transportation run-times (averaged over 100 instances) with 30 suppliers and 45 customers (top), and 60 suppliers and 90 customers (bottom). {\bf (Left)} Effect of tuning $\theta$ with $\alpha = 1$. {\bf (Right)} Effect of tuning $\alpha$ with $\theta = 0.5$.}
    \label{fig:transport_runtime}
\end{figure}

\begin{figure}[!htb]
    \centering
    \includegraphics[trim={0 0 0 0},clip,width=0.495\linewidth]{figs/transport_30_45/grid_iterations_heatmap.pdf}
    \includegraphics[trim={0 0 0 0},clip,width=0.495\linewidth]{figs/transport_60_90/grid_iterations_heatmap.pdf}
    \caption{Transportation iteration counts (averaged over 100 instances); grid search over $(\theta,\alpha)$. {\bf (Left)} 30 suppliers, 45 customers; {\bf (Right)} 60 suppliers, 90 customers.}
    \label{fig:transport_iterations_heatmap}
\end{figure}

\begin{figure}[!htb]
    \centering
    \includegraphics[trim={0 0 0 0},clip,width=0.495\linewidth]{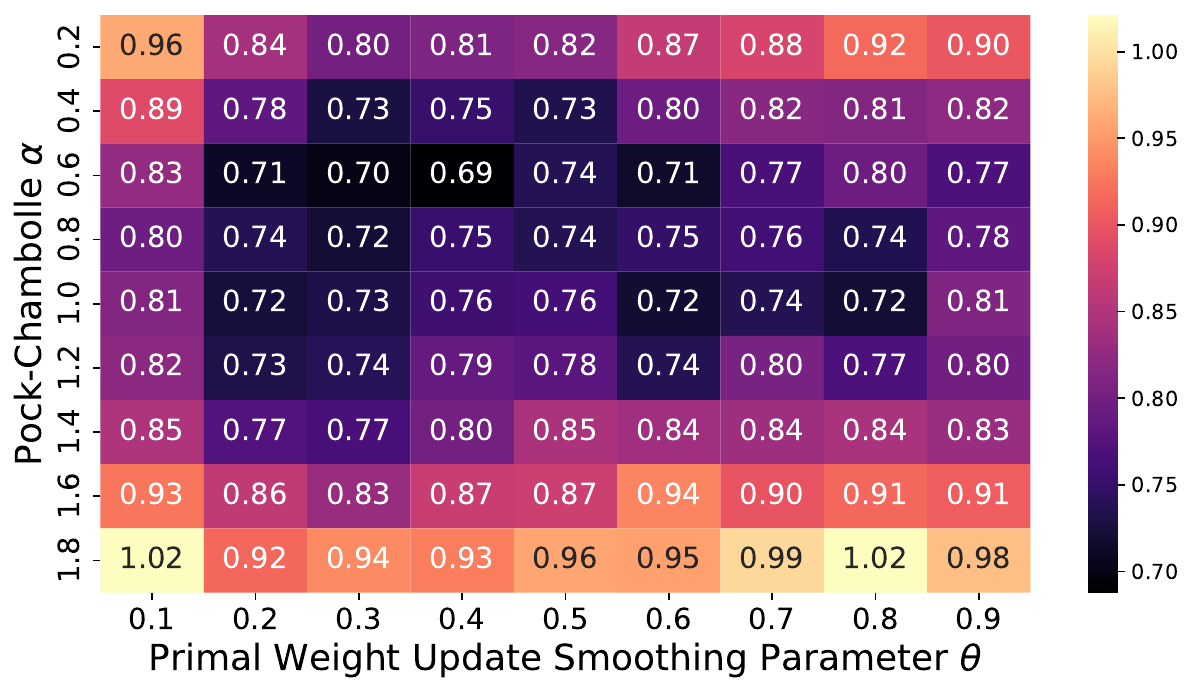}
    \includegraphics[trim={0 0 0 0},clip,width=0.495\linewidth]{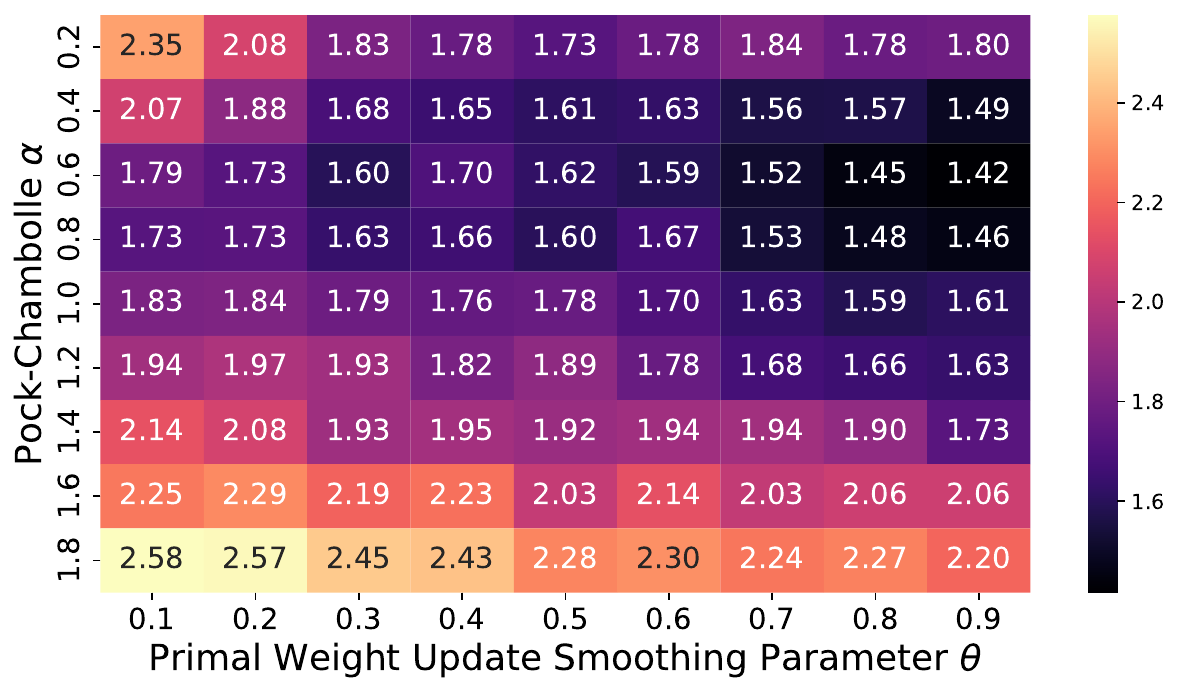}
    \caption{Transportation run-times (averaged over 100 instances); grid search over $(\theta,\alpha)$. {\bf (Left)} 30 suppliers, 45 customers; {\bf (Right)} 60 suppliers, 90 customers.}
    \label{fig:transport_runtime_heatmap}
\end{figure}

\newpage
\subsection{Perturbed QAP15 instances}

Figure~\ref{fig:qap15_iterations} displays the single-parmeter tuning iteration count results; Figure~\ref{fig:qap15_runtime} displays the single-parameter tuning run-time results; Figure~\ref{fig:qap15_heatmaps} displays the grid search results.

\begin{figure}[!htb]
    \centering
    \includegraphics[trim={0cm 0cm 0cm 0cm},clip, width=0.495\linewidth]{figs/qap15/iterations_vs_theta.pdf}
    \includegraphics[trim={0 0 0 0},clip,width=0.495\linewidth]{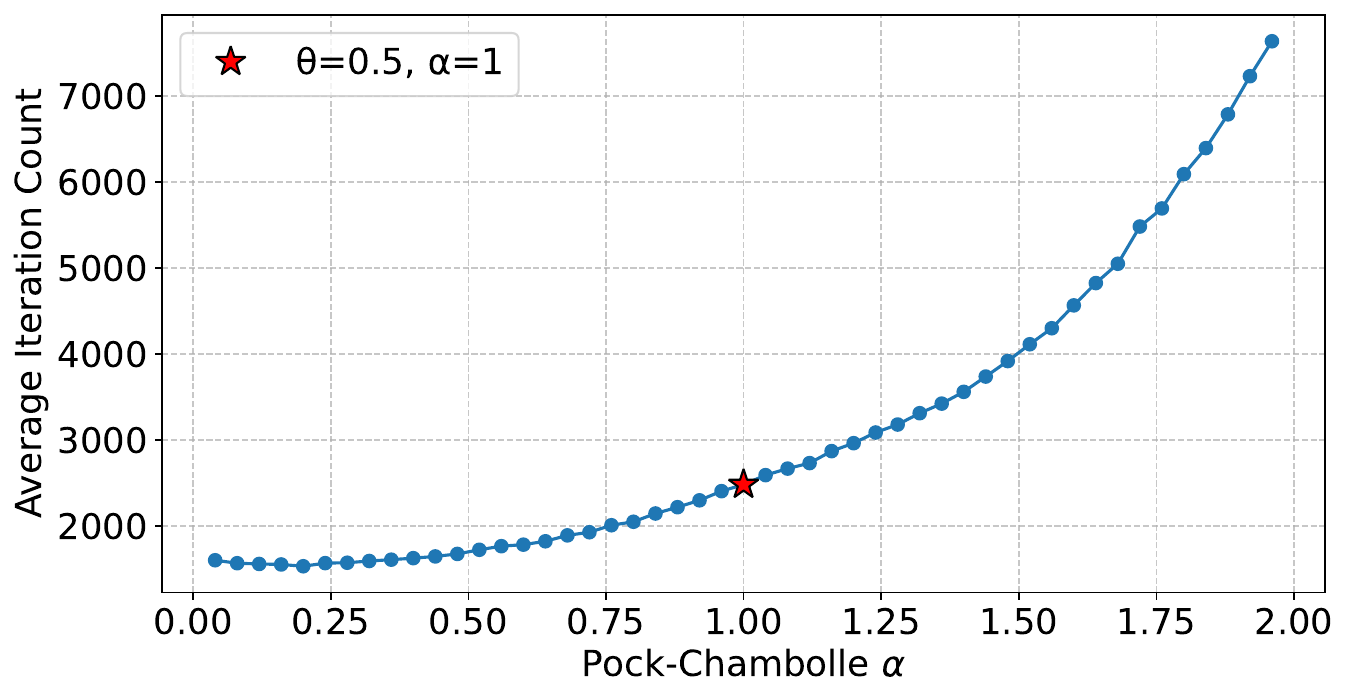}
    \caption{QAP15 iteration counts (averaged over 100 instances). {\bf (Left)} Effect of tuning $\theta$ with $\alpha = 1$. {\bf (Right)} Effect of tuning $\alpha$ with $\theta = 0.5$.}
    \label{fig:qap15_iterations}
\end{figure}

\begin{figure}[!htb]
    \centering
    \includegraphics[trim={0cm 0cm 0cm 0cm},clip, width=0.495\linewidth]{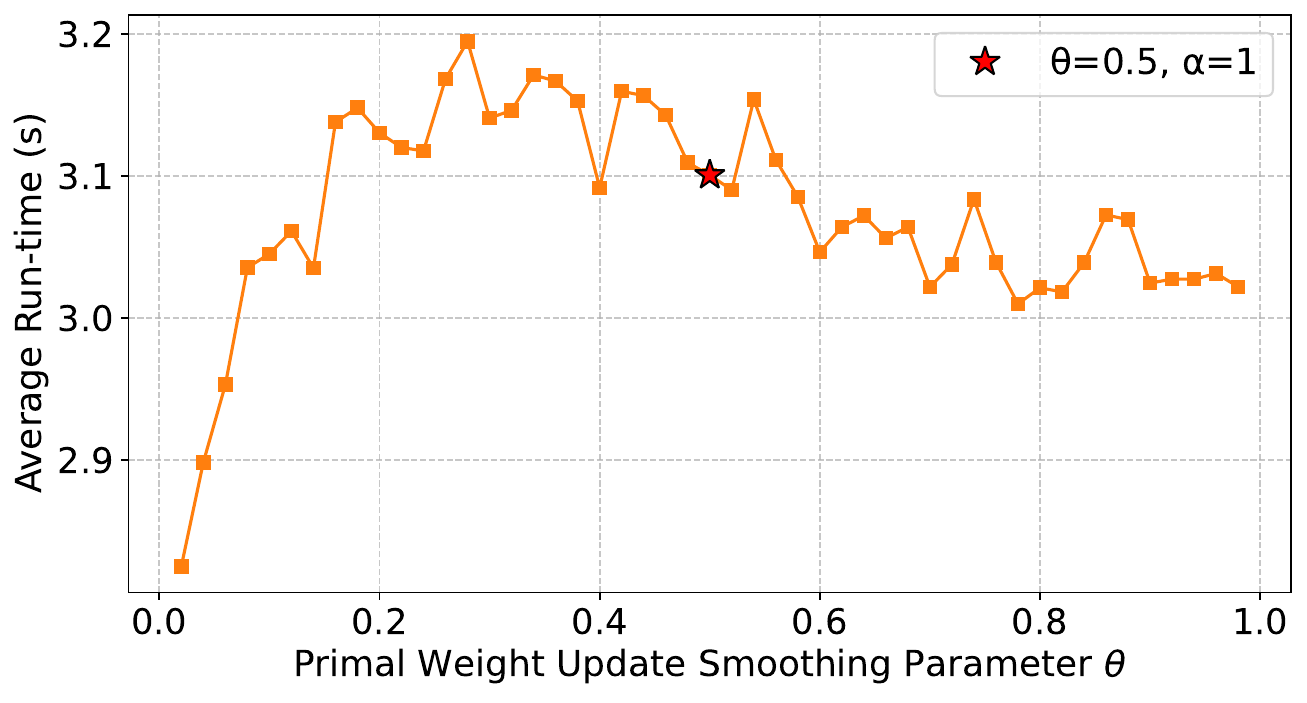}
    \includegraphics[trim={0 0 0 0},clip,width=0.495\linewidth]{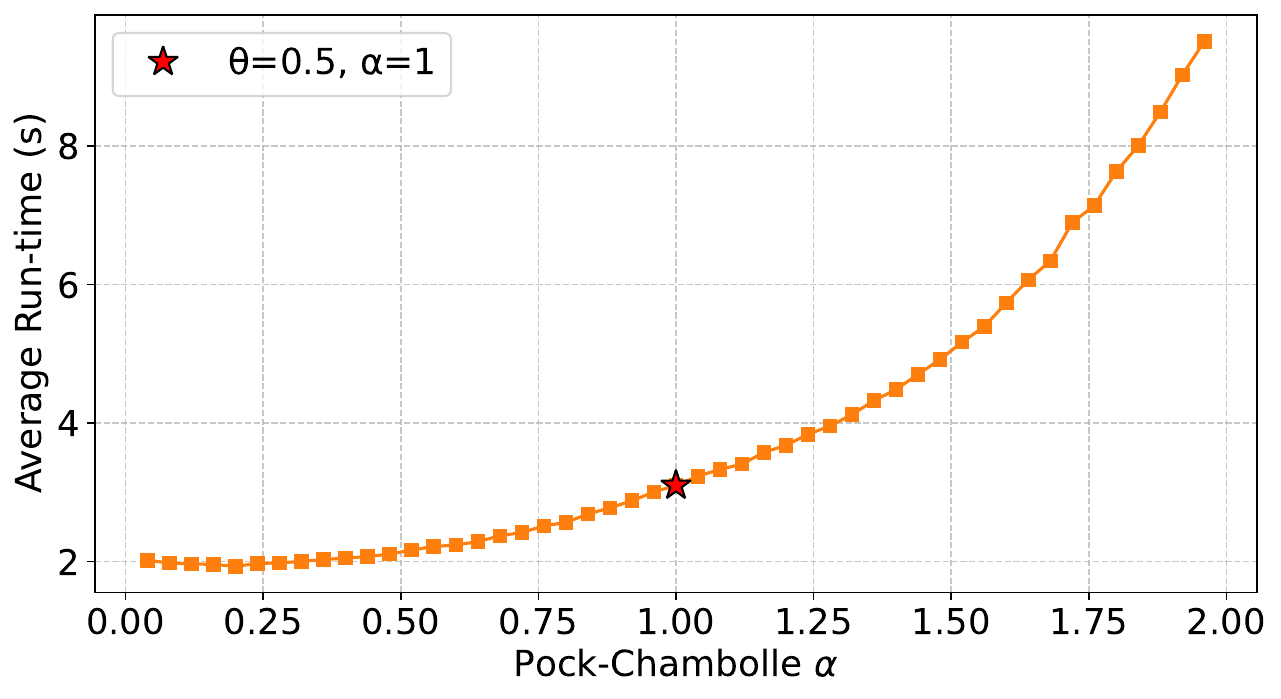}
    \caption{QAP15 run-times (averaged over 100 instances). {\bf (Left)} Effect of tuning $\theta$ with $\alpha = 1$. {\bf (Right)} Effect of tuning $\alpha$ with $\theta = 0.5$.}
    \label{fig:qap15_runtime}
\end{figure}

\begin{figure}[!htb]
    \centering
    \includegraphics[trim={0 0 0 0},clip,width=0.495\linewidth]{figs/qap15/grid_iterations_heatmap.pdf}
    \includegraphics[trim={0 0 0 0},clip,width=0.495\linewidth]{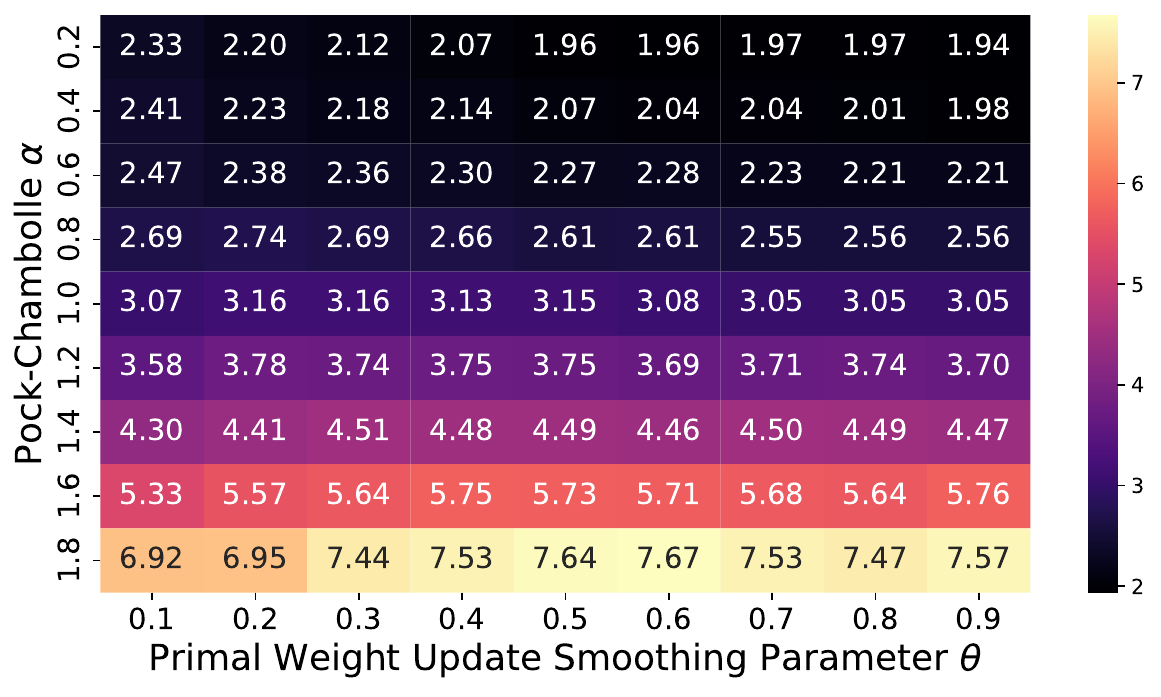}
    \caption{QAP15 iteration counts (left) and run-times (right); grid search over $(\theta,\alpha)$ (averaged over 100 instances).}
    \label{fig:qap15_heatmaps}
\end{figure}

\newpage
\subsection{Combinatorial auction instances}

\subsubsection{Decay-decay instances~\citep{sandholm2002winner}} Figure~\ref{fig:dd_auction_iterations} displays the single-parameter tuning iteration count results; Figure~\ref{fig:dd_auction_runtime} displays the single-parameter tuning run-time results; Figure~\ref{fig:dd_auction_iterations_heatmap} displays the iteration count grid search results; Figure~\ref{fig:dd_auction_runtime_heatmap} displays the run-time grid search results.

\begin{figure}[!htb]
    \centering
    \includegraphics[trim={0cm 0cm 0cm 0cm},clip, width=0.495\linewidth]{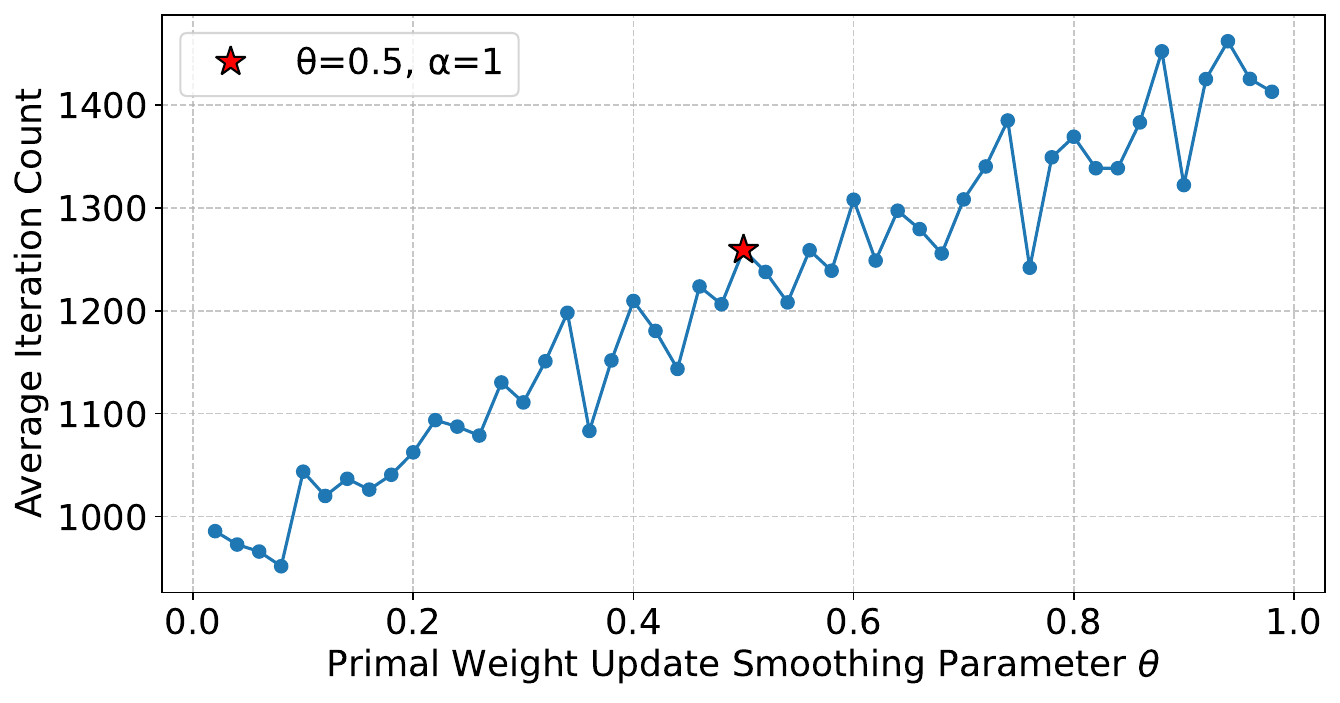}
    \includegraphics[trim={0 0 0 0},clip,width=0.495\linewidth]{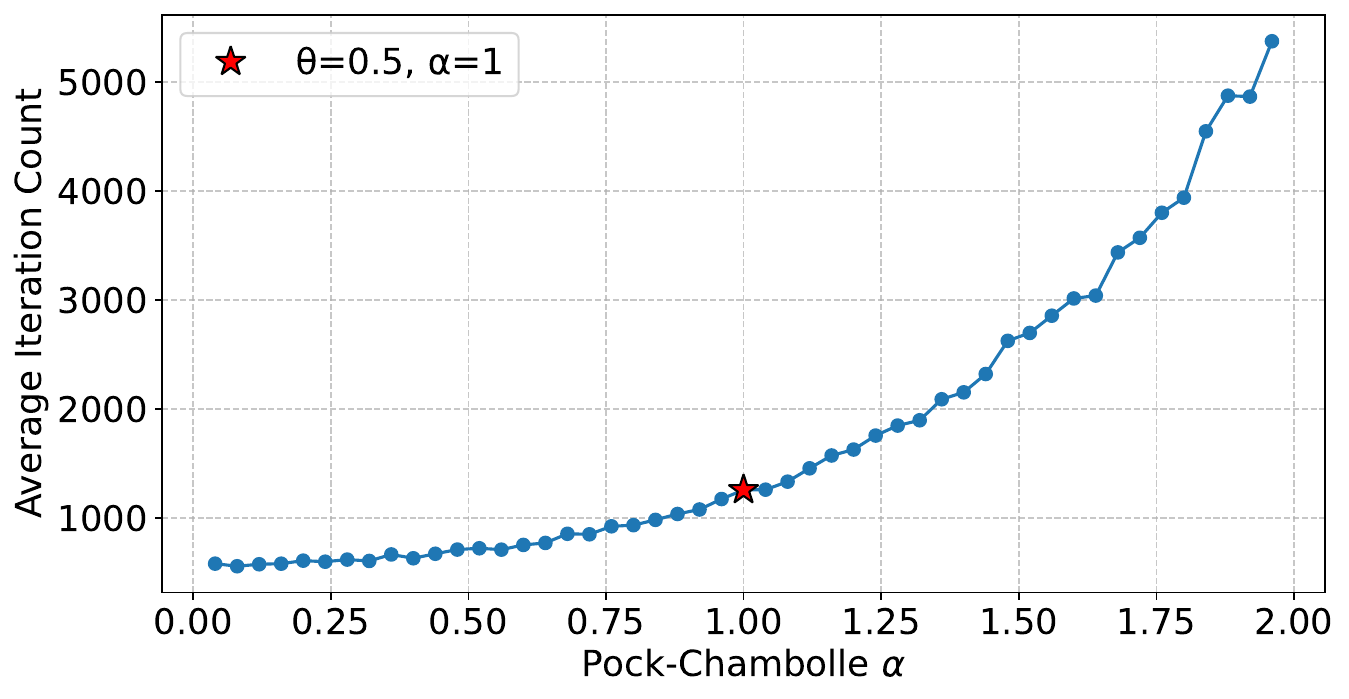}
    \includegraphics[trim={0cm 0cm 0cm 0cm},clip, width=0.495\linewidth]{figs/muca_200_2000/iterations_vs_theta.pdf}
    \includegraphics[trim={0 0 0 0},clip,width=0.495\linewidth]{figs/muca_200_2000/iterations_vs_alpha.pdf}
    \caption{Decay-decay auction iteration counts (averaged over 100 instances) with 100 items and 1000 bids (top), and 200 items and 2000 bids (bottom). {\bf (Left)} Effect of tuning $\theta$ with $\alpha = 1$. {\bf (Right)} Effect of tuning $\alpha$ with $\theta = 0.5$.}
    \label{fig:dd_auction_iterations}
\end{figure}

\begin{figure}[!htb]
    \centering
    \includegraphics[trim={0cm 0cm 0cm 0cm},clip, width=0.495\linewidth]{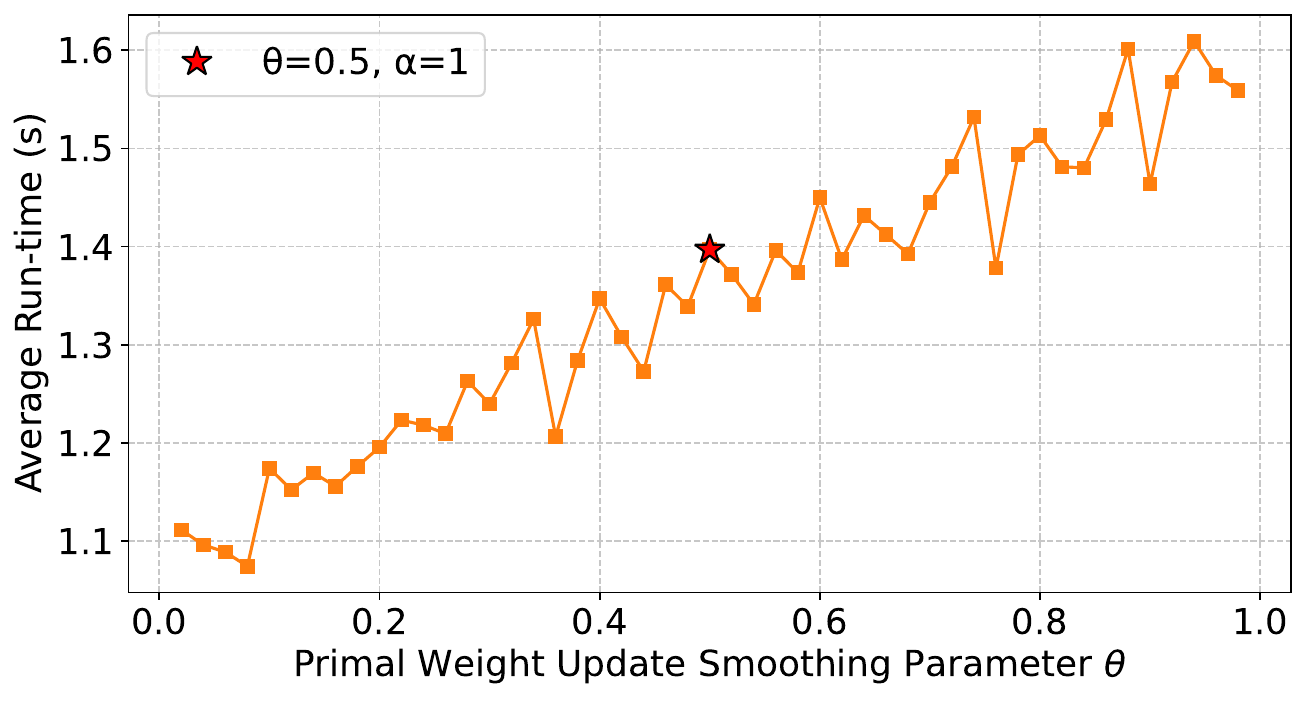}
    \includegraphics[trim={0 0 0 0},clip,width=0.495\linewidth]{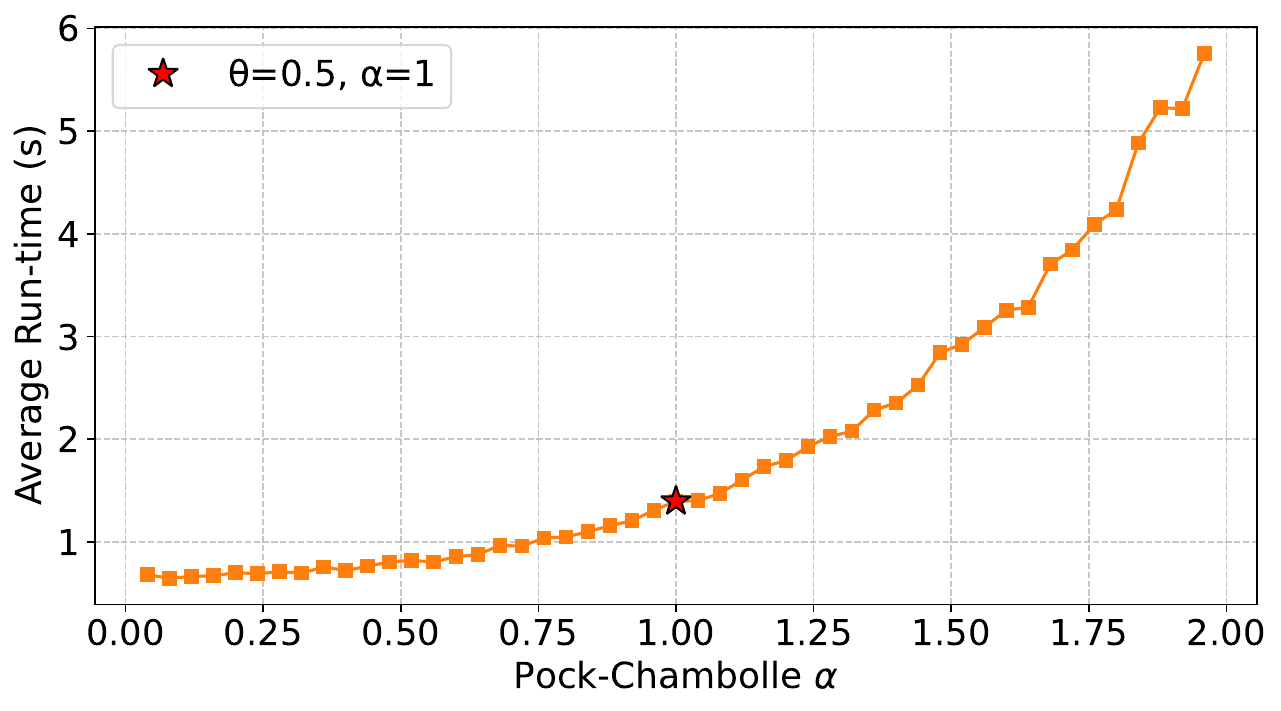}
    \includegraphics[trim={0cm 0cm 0cm 0cm},clip, width=0.495\linewidth]{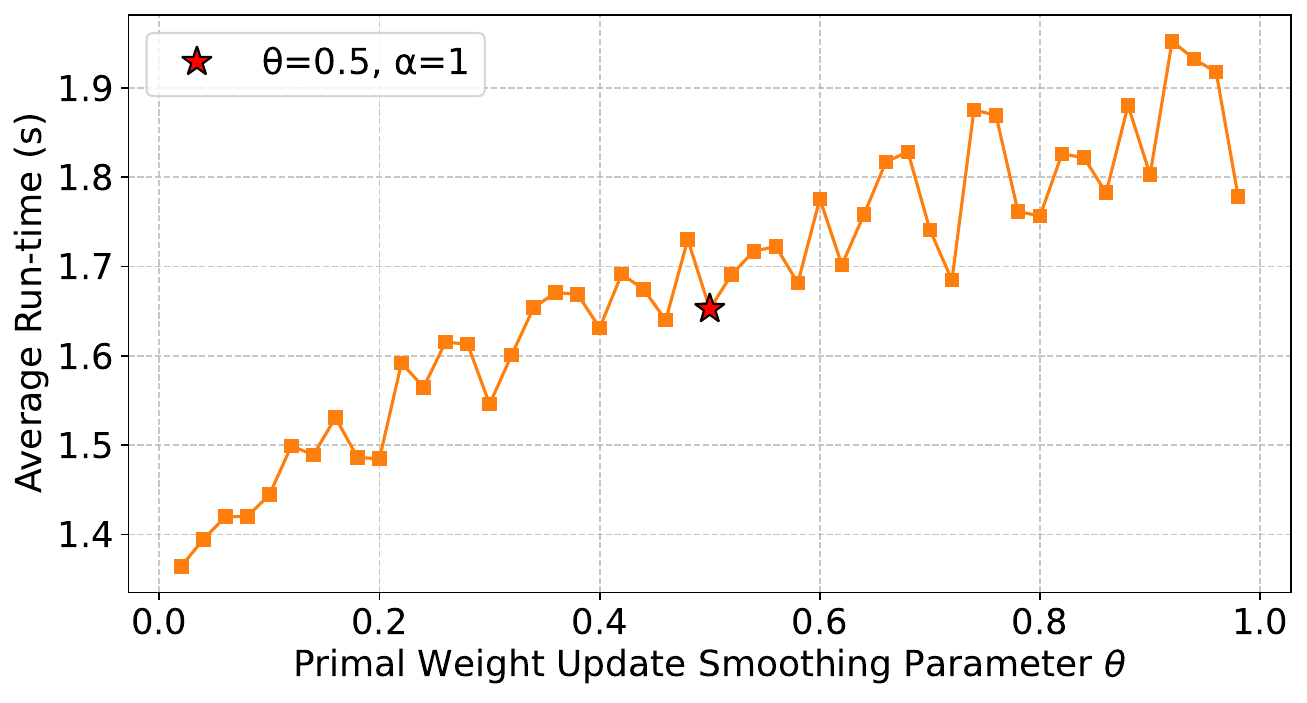}
    \includegraphics[trim={0 0 0 0},clip,width=0.495\linewidth]{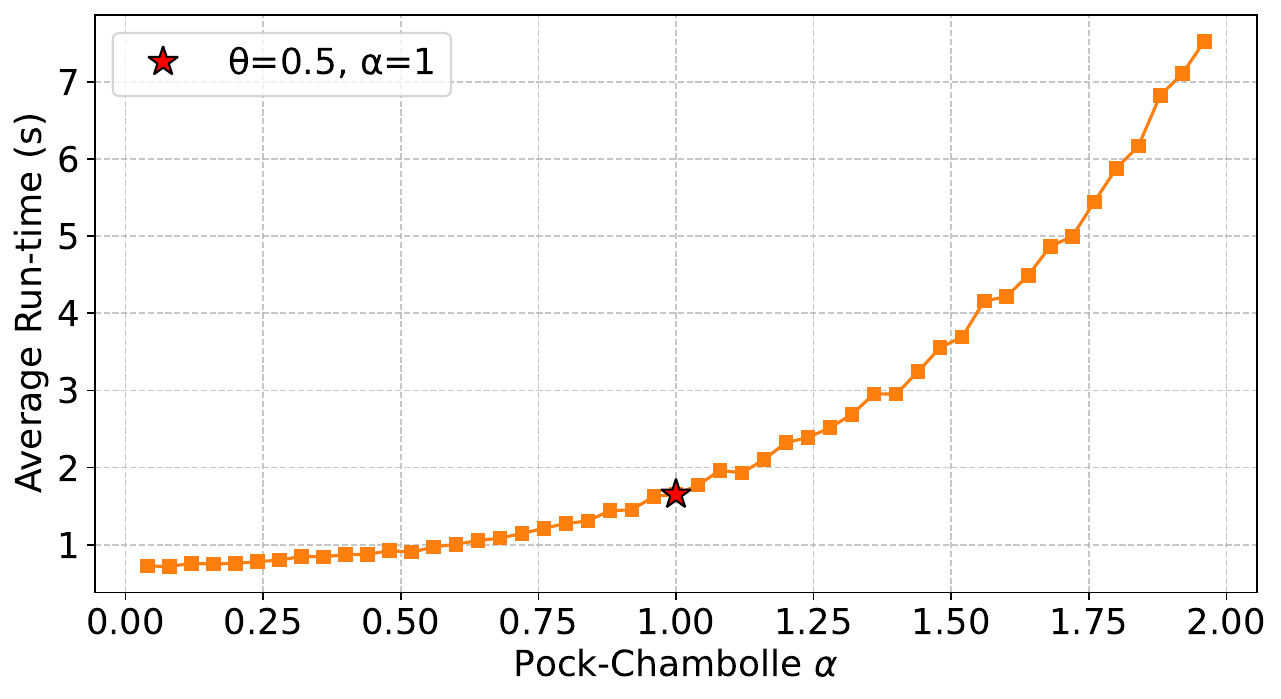}
    \caption{Decay-decay auction run-times (averaged over 100 instances) with 100 items and 1000 bids (top), and 200 items and 2000 bids (bottom). {\bf (Left)} Effect of tuning $\theta$ with $\alpha = 1$. {\bf (Right)} Effect of tuning $\alpha$ with $\theta = 0.5$.}
    \label{fig:dd_auction_runtime}
\end{figure}

\begin{figure}[!htb]
    \centering
    \includegraphics[trim={0 0 0 0},clip,width=0.495\linewidth]{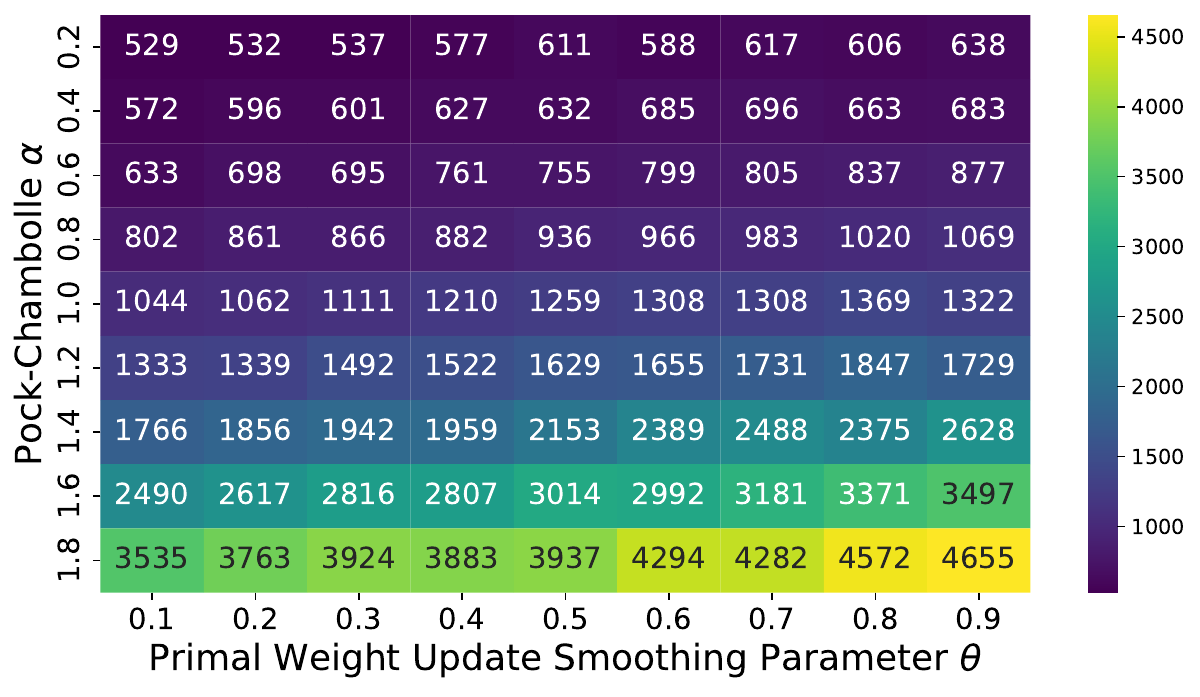}
    \includegraphics[trim={0 0 0 0},clip,width=0.495\linewidth]{figs/muca_200_2000/grid_iterations_heatmap.pdf}
    \caption{Decay-decay auction iteration counts (averaged over 100 instances); grid search over $(\theta,\alpha)$. {\bf (Left)} 100 items, 1000 bids; {\bf (Right)} 200 items, 2000 bids.}
    \label{fig:dd_auction_iterations_heatmap}
\end{figure}

\begin{figure}[!htb]
    \centering
    \includegraphics[trim={0 0 0 0},clip,width=0.495\linewidth]{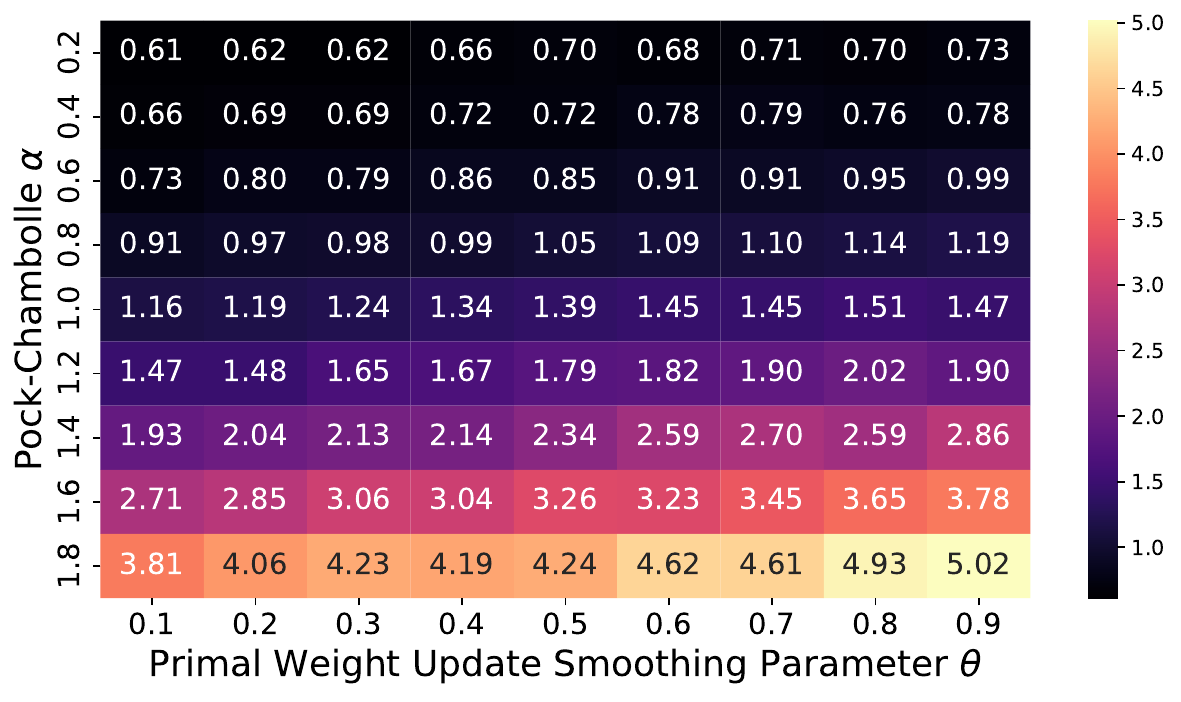}
    \includegraphics[trim={0 0 0 0},clip,width=0.495\linewidth]{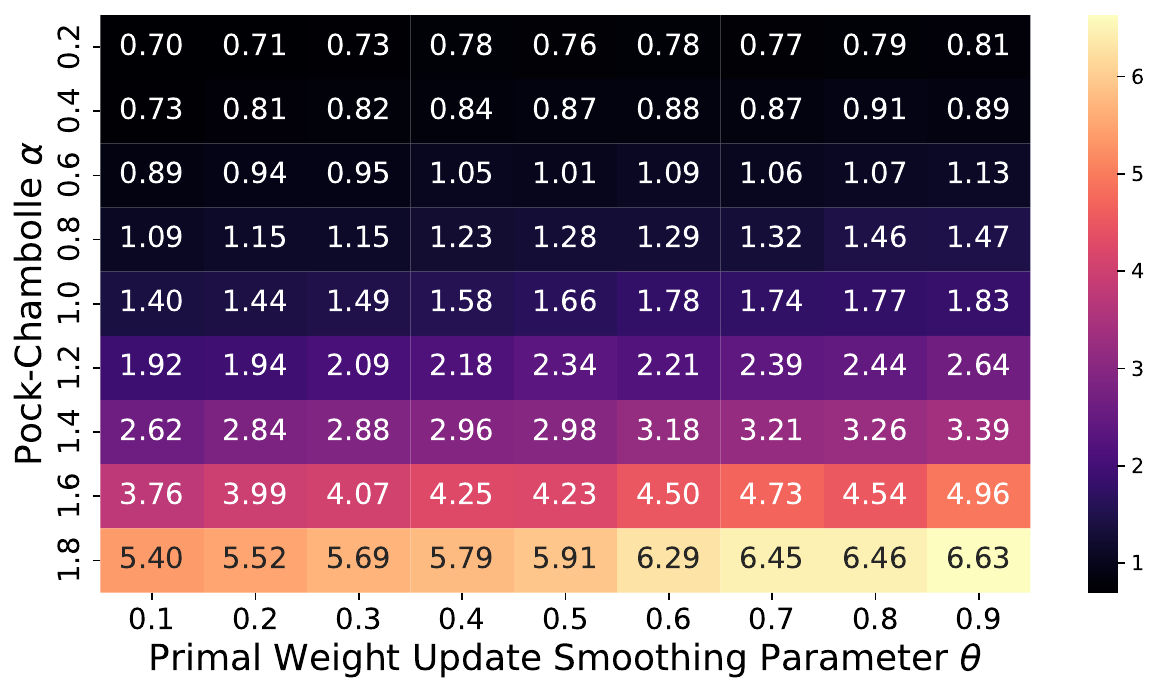}
    \caption{Decay-decay auction run-times (averaged over 100 instances); grid search over $(\theta,\alpha)$. {\bf (Left)} 100 items, 1000 bids; {\bf (Right)} 200 items, 2000 bids.}
    \label{fig:dd_auction_runtime_heatmap}
\end{figure}


\subsubsection{Multipaths instances~\citep{leyton2000towards}} Figure~\ref{fig:multipaths_iterations} displays the single-parmeter tuning iteration count results; Figure~\ref{fig:multipaths_runtime} displays the single-parameter tuning run-time results; Figure~\ref{fig:multipaths_heatmaps} displays the grid search results.

\begin{figure}[!htb]
    \centering
    \includegraphics[trim={0cm 0cm 0cm 0cm},clip, width=0.495\linewidth]{figs/multipaths_90_2000/iterations_vs_theta.pdf}
    \includegraphics[trim={0 0 0 0},clip,width=0.495\linewidth]{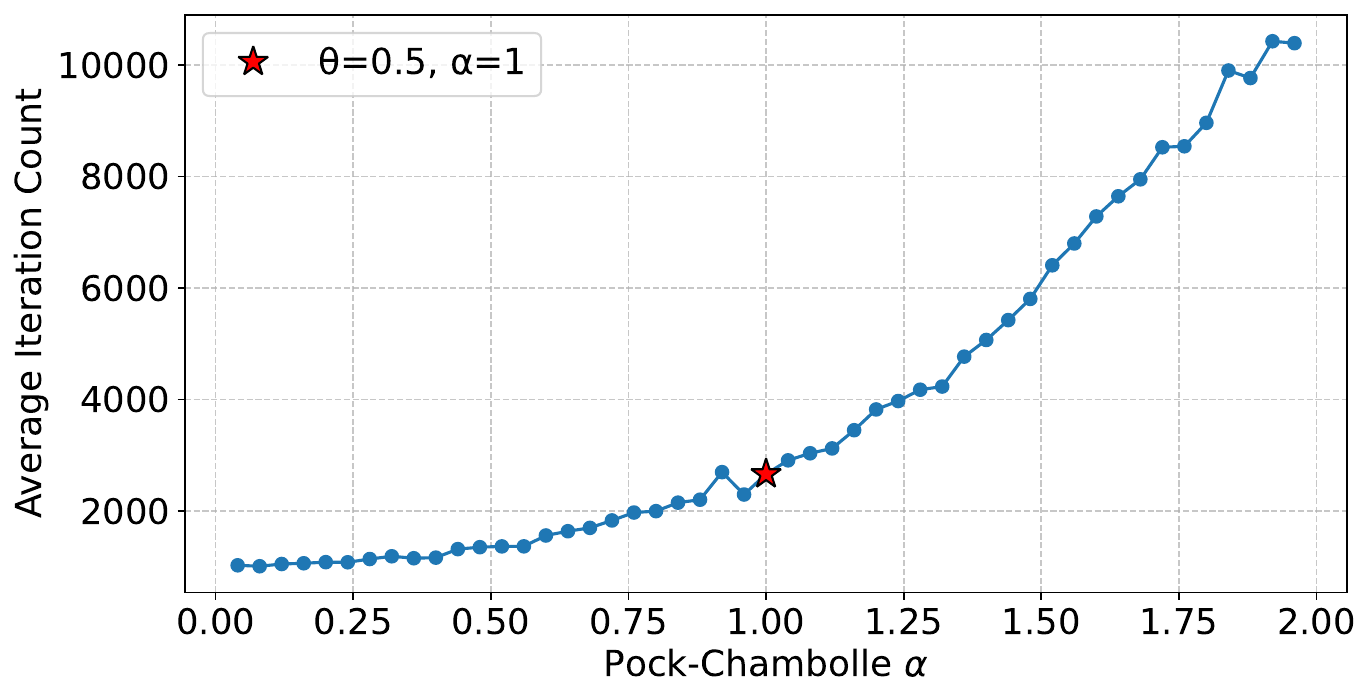}
    \caption{Multipaths auction (90 items, 2000 bids) iteration counts (averaged over 100 instances). {\bf (Left)} Effect of tuning $\theta$ with $\alpha = 1$. {\bf (Right)} Effect of tuning $\alpha$ with $\theta = 0.5$.}
    \label{fig:multipaths_iterations}
\end{figure}

\begin{figure}[!htb]
    \centering
    \includegraphics[trim={0cm 0cm 0cm 0cm},clip, width=0.495\linewidth]{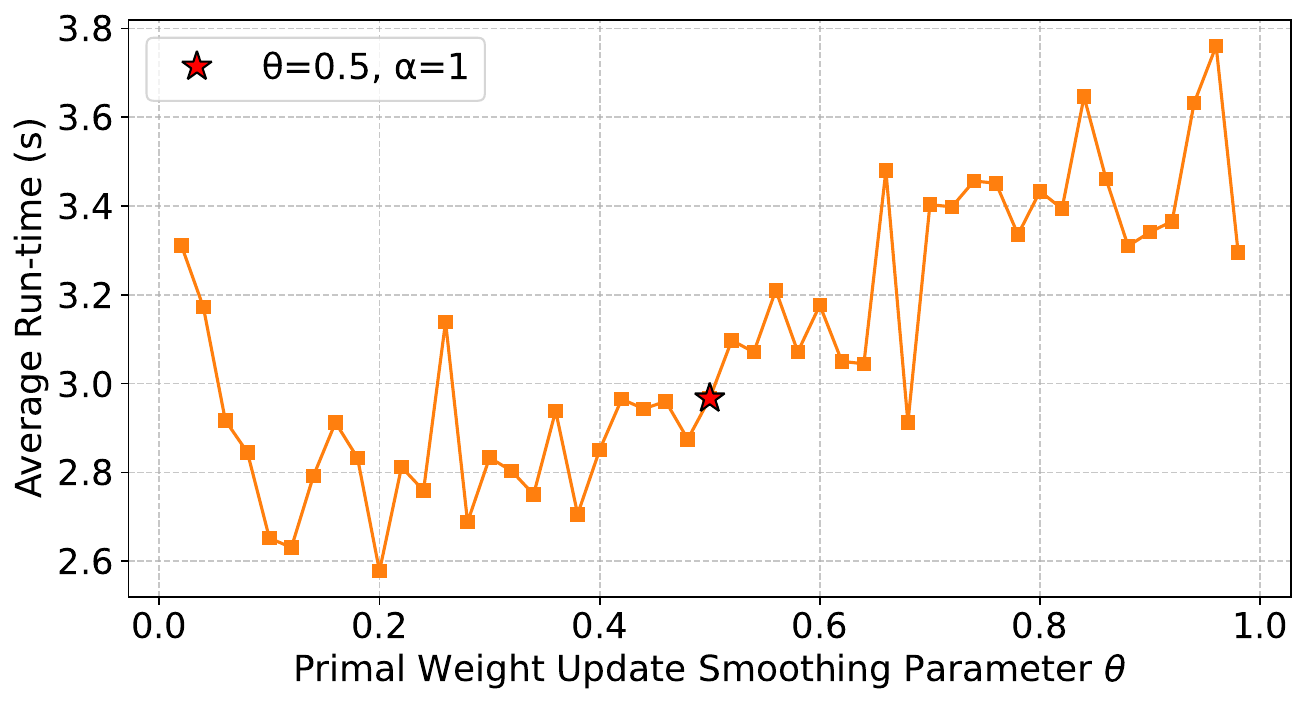}
    \includegraphics[trim={0 0 0 0},clip,width=0.495\linewidth]{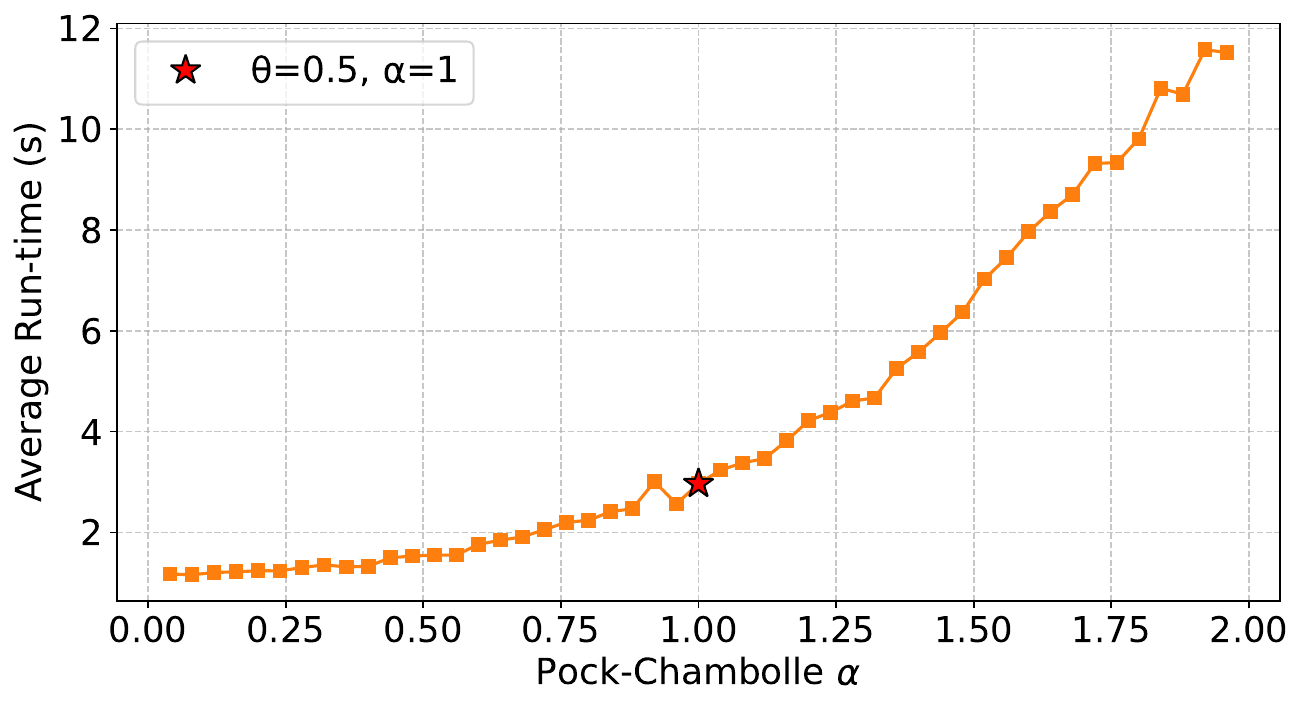}
    \caption{Multipaths auction (90 items, 2000 bids) run-times (averaged over 100 instances). {\bf (Left)} Effect of tuning $\theta$ with $\alpha = 1$. {\bf (Right)} Effect of tuning $\alpha$ with $\theta = 0.5$.}
    \label{fig:multipaths_runtime}
\end{figure}

\begin{figure}[!htb]
    \centering
    \includegraphics[trim={0 0 0 0},clip,width=0.495\linewidth]{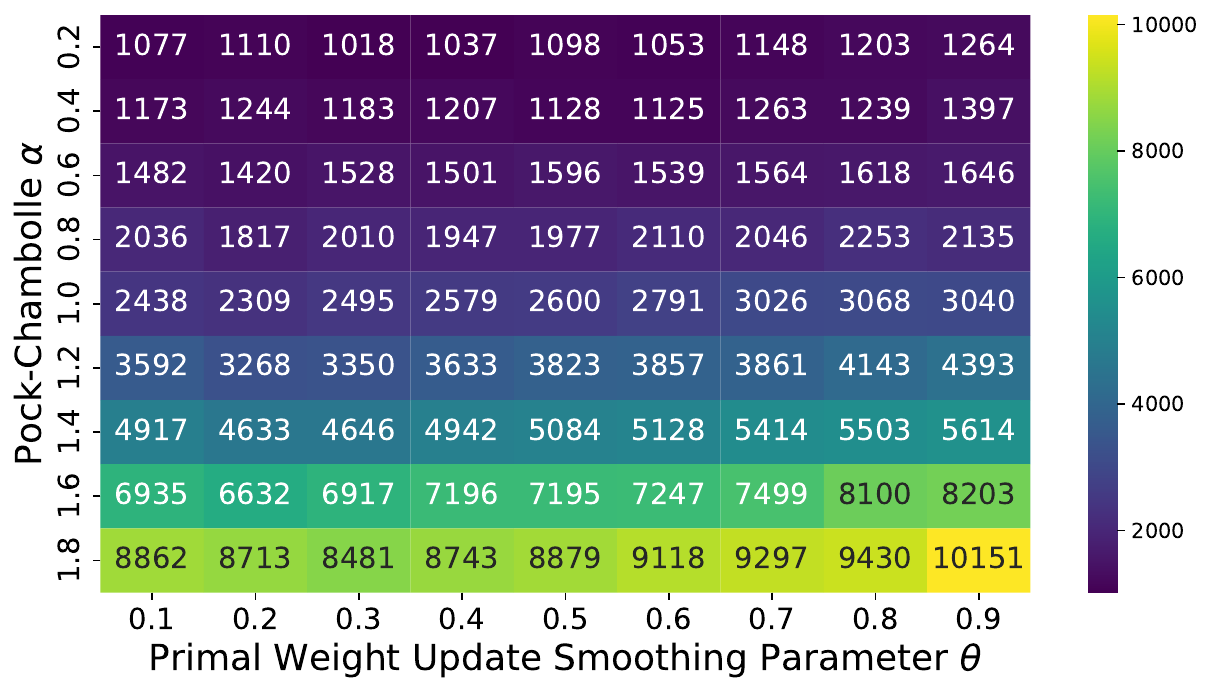}
    \includegraphics[trim={0 0 0 0},clip,width=0.495\linewidth]{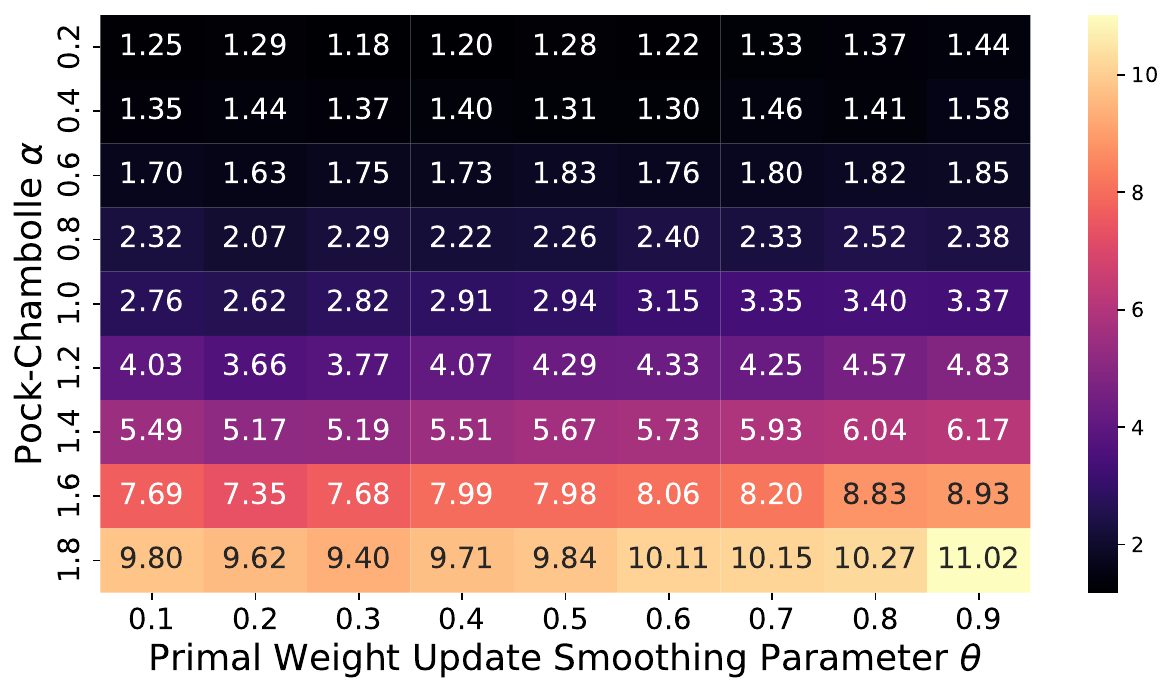}
    \caption{Multipaths auction (90 items, 2000 bids) iteration counts (left) and run-times (right); grid search over $(\theta,\alpha)$ (averaged over 100 instances).}
    \label{fig:multipaths_heatmaps}
\end{figure}

\end{document}